\documentclass[11pt,a4paper,reqno]{article}

\usepackage{amsmath, amsthm, amssymb,mathtools}
\usepackage{enumerate}
\usepackage[inline]{enumitem}

\usepackage{parskip}
 
\usepackage{url}
\usepackage[latin1]{inputenc}
\usepackage{comment,hyperref,xcolor}
\hypersetup{colorlinks=true, linkcolor=black}

\usepackage{caption}
\usepackage{subcaption}

\setlength{\textwidth}{150mm}
\setlength{\textheight}{215mm}
\setlength{\oddsidemargin}{15pt}

\numberwithin{equation}{section}


\DeclareMathOperator{\Li}{Li} 
\DeclareMathOperator{\Var}{Var} 
\DeclareMathOperator{\dist}{dist} 
\DeclareMathOperator{\support}{supp}

\newtheorem{theorem}{Theorem}

\newtheorem{lemma}[theorem]{Lemma}

\newtheorem{corollary}[theorem]{Corollary}

\newtheorem{proposition}[theorem]{Proposition} 

\newtheorem{definition}[theorem]{Definition}

\newtheorem{remark}[theorem]{Remark}
\newtheorem{remarks}[theorem]{Remarks} 
\numberwithin{theorem}{section}

\theoremstyle{remark} 

\newtheorem{example}[theorem]{Example}

\theoremstyle{definition}

\setcounter{tocdepth}{3}
 
\begin{document}

\title{Poisson Representable Processes} 
\author{Malin P. Forsstr\"om\thanks{Email: palo@chalmers.se Address: Mathematical Sciences, Chalmers University of Technology and University of Gothenburg, SE-412 96 G\"oteborg, Sweden}, Nina Gantert\thanks{Email: nina.gantert@ tum.de, Address: SoCIT, Department of Mathematics, 85748 Garching b. M\"unchen, Boltzmannstr. 3, Germany}, , Jeffrey E. Steif\thanks{Email: steif@chalmers.se,  Address: Mathematical Sciences, Chalmers University of Technology and University of Gothenburg, SE-412 96 G\"oteborg, Sweden}}

\maketitle

\begin{center}\itshape
    In celebration of Geoffrey Grimmett's 70th birthday
\end{center}

\begin{abstract}
    Motivated by Alain-Sol Sznitman's interlacement process, we consider the set of $\{0,1\}$-valued processes which can be constructed in an analogous way, namely as a union of sets coming from a  Poisson process on a collection of sets. Our main focus is to determine which processes are representable in this way. Some of our results are as follows. (1) All positively associated Markov chains and a large class of renewal processes are so representable. (2) Whether an average of two product measures, with close densities, on $n$ variables, is representable is related to the zeroes of the polylogarithm functions.
    (3) Using (2), we show that a number of tree-indexed Markov chains as well as the Ising model on~\( \mathbb{Z}^d ,\)~\( d\geq 2,\) for certain parameters are not so representable. (4) The collection of permutation invariant processes that are representable corresponds exactly to the set of infinitely divisible random variables on $[0,\infty]$ via a certain transformation. (5) The supercritical (low temperature) Curie-Weiss model is not representable for large~$n$.
\end{abstract}

\tableofcontents

\section{Introduction}

Let $S$ be a finite or countably infinite set and let $\nu$ be a $\sigma$-finite
measure on $\mathcal{P}(S)\backslash \{\emptyset\}$ where
$\mathcal{P}(S)$ is the power set of $S$. This generates a $\{0,1\}$-valued process
$X^\nu=\{X^\nu_i\}_{i\in S}$ defined as follows.

We first consider the Poisson process
$Y^\nu$ on $\mathcal{P}(S)\backslash \{\emptyset\}$ with intensity measure $\nu$ (see~\cite{lp} for the definition).
Note that $\mathcal{P}(S)\backslash \{\emptyset\}$ can be viewed as an open subset
of $\{0,1\}^S$ and hence has a nice topology and Borel structure.
$Y^\nu(\omega)$ is then a collection $\{B_j\}_{j \in I}$ of nonempty subsets (perhaps with repetitions) of $S$. Note that $| I |<\infty$ a.s.\ if  $\| \nu \| \coloneqq \nu(\mathcal{P}(S)\backslash\{\emptyset\}) <\infty$ and that
$|I |=\infty$ a.s.\ if  $\| \nu \| =\infty$. 

Finally, we define $\{X^\nu_i\}_{i\in S}$ by
$$
X^\nu_i= \begin{cases}
    1 &\text{if } i\in \cup_{j\in I} B_j \cr 
    0 &\text{otherwise.}
\end{cases}
$$
To see that $X^\nu_i$ is a random variable, one observes
that  $X^\nu_i=1$ if and only if
$$
Y^\nu\cap \mathcal{S}_i \neq \emptyset,
$$
where
$$
\mathcal{S}_i\coloneqq \{T\in \mathcal{P}(S)\colon  i\in T\}
$$
and the above is an event by definition of a Poisson process
since $\mathcal{S}_i$ is a open set in $\mathcal{P}(S)\backslash\{\emptyset\}$. Loosely speaking, \( X^\nu\) is obtained by taking the union of the sets arising in the Poisson process, and identifying this with the corresponding \( \{ 0,1\}\)-sequence.

\begin{definition}
    We let \( \mathcal{R}\) denote the set of all processes $(X_i)_{i\in S}$ which are equal (in distribution) to $X^\nu$ for some $\nu$. 
\end{definition}

Understanding which \( X\) are in \( \mathcal{R}\) seems to be an interesting question and will be the main focus of this paper.
We will for the most part deal with three different situations:
\begin{enumerate}
    \item $S$ is a finite set,
    \item $S$ is $\mathbb{Z}^d$ for some $d\ge 1$ and $\nu$ (and hence also $X^\nu$) is translation invariant under the natural  $\mathbb{Z}^d$-action on $\mathcal{P}(S)\backslash \{\emptyset\}$ and
    \item $S$ is infinite and $\nu$ (and hence also $X^\nu$) is invariant under all finite permutations of  $\mathcal{P}(S)\backslash \{\emptyset\}$.
\end{enumerate}
 
We give an alternative but equivalent description of this model in the case when $|S|<\infty$
which has a more combinatorial flavor. Let $S$ be a finite set and for each nonempty subset
$T$ of $S$, let $p(T)\in [0,1]$. Now, for each $\emptyset\neq T\subseteq S$, we 
independently ``choose"  $T$ with
probability  $p(T)\in [0,1]$ and we let $X$ be the union of the ``chosen" $T$'s which
we identify with a $\{0,1\}$-valued process 
$\{X_i\}_{i\in S}$. 
The correspondence between this formulation and the earlier one is that
$p(T)$ is simply the probability that a Poisson random variable with parameter $\nu(\{T\})$
is nonzero.

We introduce the following natural notation.
For $A\subseteq S$, we let
$$
\mathcal{S}_A^\cup \coloneqq \bigcup_{i\in A} \mathcal{S}_i\qquad 
\bigl(=\{T \in \mathcal{P}(S) \colon  A\cap T\neq \emptyset\} \bigr)
$$
and
$$
\mathcal{S}_A^\cap \coloneqq \bigcap_{i\in A} \mathcal{S}_i\qquad 
\bigl(=\{ T \in \mathcal{P}(S) \colon  A\subseteq T\} \bigr).
$$

We observe that for any $A\subseteq S$, we have that
$$P\bigl(X^\nu(A)\equiv 0\bigr)= e^{-\nu(\mathcal{S}_A^\cup)}.$$

We now discuss a number of examples. In these examples and throughout the rest of the paper, for \( p \in [0,1],\) we will let \( \Pi_p\) denote a product measure with 1's having density~\( p.\) 

\begin{example} 
    Let \( a>0, \) and let 
    $$
        \nu = \sum_{i\in S}  a \delta_{\{ i\}}.
    $$
    Then \( X^\nu \sim \Pi_{1-e^{-a}}.\) 
\end{example}
 
\begin{example}\label{example: 1.3} 
    Let \( a>0, \) and let \( \nu = a \delta_{S}.\) Then $X^\nu$ has distribution 
    $$
        (1-e^{-a}) \delta_{\mathbf{1}} + e^{-a} \delta_{\mathbf{0}},
    $$
    where $\mathbf{1}$ ($\mathbf{0}$)
    is the configuration consisting of all $1$'s ($0$'s). If \( S = \mathbb{Z}^d, \) then this yields a (trivial) non-ergodic process.
\end{example} 

\begin{example}
    Let \( S = \mathbb{Z},\) and let  
    $$
        \nu=\sum_{i\in \mathbb{Z}}  \delta_{\{i, i+1\}}.
    $$
    Then $X^\nu$ is the image of an i.i.d.\ sequence under a block (finite range)\color{black}{} map. More precisely, if  \( {(Y_i)_{i \in \mathbb{Z}} \sim \Pi_{1-e^{-1}},}\) then \( X^\nu \overset{d}{=} \bigl(\max(Y_i,Y_{i+1})\bigr)_{i \in \mathbb{Z}}.\)
\end{example}

\begin{example}
    Let \( S = \mathbb{Z},\) and let 
    $$
        \nu= \sum_{i\in \mathbb{Z}, \, n\ge 0} a_n \delta_{\{i, i+n\}}
    $$
    for some \( (a_n)_{n \geq 0}.\) 
    It is easy to see (similar to Example~\ref{example: 1.3}) that $X^\nu$ is a Bernoulli shift. This means that there is an equivariant map, i.e., a map commuting with the shift, from an i.i.d.\ process to the process in question. \color{black}{} However,  we don't know if it is a finitary factor of an i.i.d.\ sequence.
     Finitary means that the equivariant map above is a.s.{} continuous.  These concepts will play a very tiny role in this paper and not until Section~\ref{sec: stationary}. \color{black}
    The Borel-Cantelli Lemma immediately gives that $X^\nu\equiv 1$ a.s.\ if and only if $\sum_{n=0}^\infty a_n=\infty$.
\end{example}

\begin{example}
    The well-studied random interlacement process in $\mathbb{Z}^d$ for $d\ge 3,$ introduced by Alain-Sol Sznitman,
    falls into this context; see~\cite{DRS} for the
    definition and some of what is known. For those familiar with this, 
    we actually need to massage it slightly so  that it falls into our context.  In the random interlacement process, we have a Poisson process over random walk realizations modulo time shifts
    which are transient in both forward and backward time.
    If we now map such a trajectory (modulo time shifts) to
    its range and then take their union, we then obtain a process in \( \mathcal{R}\)  since the push-forward of a Poisson process is a Poisson process.  The $\nu$ in~\cite[Theorem 5.2]{DRS} would provide us with our $\nu$ (after pushing forward).
    The random interlacement process has an intensity parameter which just corresponds to scaling  the measure $\nu$. It was in fact this model which provided the motivation for our paper.
\end{example}

\begin{example}
    The union of the discrete loops that arise in a random walk loop soup  corresponds to a process in  \( \mathcal{R}.\)  
    The random walk loop soup is a well studied object in relation to the Brownian loop soup and  the discrete Gaussian free field. It was introduced in~\cite{LawlerTrujilloFerreras} and is defined in the following way: the rooted loop measure $\mu^{\rm RW}$ assigns to each (nearest neighbor) random walk loop in $\mathbb{Z}^2$ of length $2n$ the measure $(1/2n)4^{-2n}$ and the measure $\nu$ is given by $\lambda \mu^{\rm RW}$ where $\lambda \in (0, \infty)$ is an intensity parameter.
\end{example}

While we are limiting ourselves to countable sets, in continuous space, similar constructions (e.g., Boolean models, Poisson cylinder models) play a crucial role in stochastic geometry. We also want to mention that the idea of enriching a graph by attaching a Poissonian number of independent finite subgraphs comes up naturally in the analysis of random graphs, see for instance~\cite{bl} and~\cite{BJR}.

The paper is organized as follows.
Our main focus is to determine, for a given $\{0,1\}$-valued process, if it belongs to \(\mathcal{R}\). On the way to answering this question, we give some properties of processes in  \(\mathcal{R}\). It is easy to show that processes in  \(\mathcal{R}\) are positively associated. It turns out that they also have the so-called downward FKG property (but not necessarily the FKG property), see 
Theorem~\ref{theorem:DFK}. Also, if $X$ is a collection of i.i.d.\ $\{0,1\}$-valued random variables, then \(X \in \mathcal{R}\).
Taking \(S = \mathbb{Z}\), it is natural to ask if Markov chains or renewal processes are in  \(\mathcal{R}\). 
We show in Section~\ref{sec: Markov} that indeed all positively associated Markov chains are in \(\mathcal{R}\), and describe the corresponding measure $\nu$, see Theorem~\ref{theorem: Markov}. 
On the way to this result, we give a necessary and sufficient condition for a renewal process to be in \(\mathcal{R}\), see Theorem~\ref{theorem: general Markov}.
In Section~\ref{sec: inf perm}, we consider processes $X$ on $\{0,1\}^{\mathbb{N}}$ which are invariant under finite permutations, and we ask if they are in \(\mathcal{R}.\) 
In this section, we first prove a version of de Finetti's Theorem for possibly infinite measures (which turned out to be known) that is of independent interest, see Theorem~\ref{theorem: definetti}. 
%
%
We then show that the $X \in \mathcal{R}$ are in one-to-one correspondence with infinitely divisible distributions on \([0, \infty)\), see Theorem~\ref{theorem: characterization}. 
Interesting such examples include the $\{0,1\}$-sequences coming from classical urn models, see Examples~\ref{ex: 4.9} and~\ref{ex: 4.9 beta}.
In Section~\ref{sec: curie weiss}, we investigate the finite permutation invariant case. Taking a sequence $X_n \in \{0,1\}^n$, $n \geq 1$, such that each $X_n$ is permutation invariant, we show that in order to have $X_n \in \mathcal{R}$ for all $n\geq 1$, it is necessary that the arithmetic means of the $X_n$'s concentrate, see Theorem~\ref{theorem: variance and limit}. As an immediate consequence, we obtain that for temperatures less than the critical one, the Curie-Weiss model is not in \(\mathcal{R}\) for large \( n,\) see Theorem~\ref{theorem: curie weiss}. 
We then study finite averages of  $m$ product measures, and we give sufficent conditions for $X\in \mathcal{R}$ for $m \geq 2$ and for $X\notin \mathcal{R}$ for $m=2$, see Theorem~\ref{proposition: finite cases} for the latter case. The proof is quite technical, given that the statement is only about an average of two product measures, but Section~\ref{sec: tree} relies crucially on this result.
In Section~\ref{sec: tree}, we consider tree-indexed Markov chains on infinite trees and we give conditions on the parameters such that the process is not in \(\mathcal{R}\). With a similar argument, we show that the Ising model on $\mathbb{Z}^d$ for $d\ge 2$ is not in \(\mathcal{R}\) for a certain range of parameters. 
In Section~\ref{theorem: main ergodic}, we consider stationary processes $X^\nu$ on $\{0,1\}^{\mathbb{N}}$. We give a necessary and sufficient condition on $\nu$ such that $X^\nu$ is ergodic, see Theorem~\ref{theorem: main ergodic}, and a sufficent condition on $\nu$ such that $X^\nu$ is a Bernoulli shift, see Theorem~\ref{theorem: bernoulli shift}. We end in Section~\ref{sec: questions} with some open questions.

\section{Background definitions, and some first properties and examples}

We begin by recalling some basic 
definitions.

\begin{definition}
    A probability measure $\mu$ on $\{0,1\}^S$ is said to be  positively associated if for all increasing sets $A$ and $B$,
    $$
        \mu(A\cap B) \ge \mu(A)\mu(B).
    $$
\end{definition}

\begin{definition}
    A probability measure $\mu$ on $\{0,1\}^S$ is said to satisfy the FKG property if for all $I\subseteq S$ and all $\{a_i\}_{i\in I}$ with each
    $a_i\in \{0,1\}$ such that \( \mu( X_i=a_i \mbox{ for } i\in I)>0\)\color{black}, the conditional measure 
    $$
        \mu(\cdot \mid X_i=a_i \mbox{ for } i\in I)
    $$
    on $\{0,1\}^{S\backslash I}$ is positively associated.
\end{definition}
 
\begin{definition}
    A probability measure $\mu$ on $\{0,1\}^S$ is said to satisfy the downwards FKG property if in the definition of the FKG property, we only require the positive association of the conditional measure when each $a_i=0.$
\end{definition}

Clearly the FKG property implies the downward
FKG property which in turn implies positive
association. Note that while positive association and the FKG property are unaffected
by reversing 0's and 1's, this is not the case 
with the downward FKG property.

The following result demonstrates the very different
roles played by the \(0\)'s and \(1\)'s for our $X^\nu$.

\begin{theorem}\label{theorem:DFK}
    For every $S$ and $\nu$, $X^\nu$ has the downward FKG property. However, it does not necessarily satisfy the FKG property.
\end{theorem}

\begin{proof}
    We begin the proof by showing that $X^\nu$  has positive association for all $\nu$'s. In the case of finite $S$, it is immediate that $X^\nu$ is given by increasing functions of i.i.d.\ random variables and hence by Harris' Theorem has positive association. An easy approximation argument gives the result for general sets \( S.\)

    We now prove the downward FKG property.  To this end, recall that $Y^\nu$ is the Poisson process  on $\mathcal{P}(S)\backslash \{\emptyset\}$ with intensity measure $\nu.$ \color{black} When one conditions on the event that $X^\nu$ is zero on some subset $A\subseteq S$, one is conditioning on the event that no element in $\mathcal{S}_A^\cup$ occurred in $Y^\nu$. The conditional distribution of $Y^\nu$ then becomes $Y^{\nu|_{(\mathcal{S}_A^\cup)^c}}$ and hence the conditional distribution of $X^\nu$ is still of our form and hence is positively associated by the first part of this proof.

    Finally, we give an example of an \( X^\nu \) which does not have the FKG property.
Let ${S=\{1,2,3\}}$ and $\nu$ give weight $\log 2$ to each of $\{1,2\}$ and $\{2,3\}$. Then $Y^\nu$ is one of the following collections of sets, each having probability $1/4$. 
    \begin{enumerate}[label=(\alph*)]
        \item $\emptyset$,
        \item $\bigl\{\{1,2\}\bigr\}$,
        \item $\bigl\{\{2,3\} \bigr\}$, and
        \item $\bigl\{\{1,2\},\{2,3\}\bigr\}$.
    \end{enumerate} 
    If we condition on $x_2=1$, then we know $Y^\nu$ is one of the last three each then with conditional probability $1/3$. Now, it is immediate that conditioned on $x_2=1$, we have $x_1=1$ with probability $2/3$, $x_3=1$ with probability $2/3$ and $x_1=x_3=1$ with probability $1/3$, which is less than $4/9$. Hence $Y^\nu$ is not FKG.
\end{proof}

\begin{remark}
    The example in the proof of Theorem~\ref{theorem:DFK} also gives an example of an \( X \in \mathcal{R}\) such that \( X \mid X_2 = 1 \) is not in \( \mathcal{R}.\)
\end{remark}

\begin{remark}
    The upper invariant measure for the contact process exhibits similar behavior to the
  above example. It is not FKG (see~\cite{L})
  but it is downwards FKG (see~\cite{BHK}).
\end{remark}

\begin{proposition}
    Assume  $\nu$ is a translation invariant measure on \( \mathcal{P}(\mathbb{Z}) \smallsetminus \{ \emptyset \} \) that gives positive weight to an infinite subset $S$ which is not periodic. Then $X^\nu \equiv 1$ a.s.
\end{proposition}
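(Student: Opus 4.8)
The plan is to reduce the whole statement to showing that $\nu(\Sc_i)=\infty$ for every $i\in\mathbb{Z}$, where $\Sc_i=\{T\colon i\in T\}$ is as defined above. Indeed, granting this, the identity $P\bigl(X^\nu(A)\equiv 0\bigr)=e^{-\nu(\Sc_A^\cup)}$ recorded earlier, applied with the singleton $A=\{i\}$ (so $\Sc_A^\cup=\Sc_i$), gives $P(X^\nu_i=0)=e^{-\nu(\Sc_i)}=0$, and a countable union over $i\in\mathbb{Z}$ then yields $X^\nu\equiv 1$ a.s.

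To prove $\nu(\Sc_i)=\infty$, I would set $c\coloneqq\nu(\{S\})$, which is positive by hypothesis, and use translation invariance of $\nu$ under the $\mathbb{Z}$-action to conclude $\nu(\{S+k\})=c$ for every $k\in\mathbb{Z}$. Since $S$ is not periodic, the translates $S+k$, $k\in\mathbb{Z}$, are pairwise distinct (an equality $S+k=S$ with $k\neq 0$ would exhibit a period of $S$), so $\bigl\{\{S+k\}\colon k\in\mathbb{Z}\bigr\}$ is a countable family of pairwise disjoint singletons in $\mathcal{P}(\mathbb{Z})\smallsetminus\{\emptyset\}$, each of $\nu$-measure $c$. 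Now fix $i$. Among these, the ones lying in $\Sc_i$ are precisely those with $i\in S+k$, i.e.\ with $i-k\in S$; since $k\mapsto i-k$ is a bijection of $\mathbb{Z}$, the index set $\{k\colon i-k\in S\}$ is in bijection with the infinite set $S$, hence infinite. Therefore, by countable additivity of $\nu$,
$$
\nu(\Sc_i)\;\ge\;\sum_{k\colon i-k\in S}\nu(\{S+k\})\;=\;\sum_{k\colon i-k\in S}c\;=\;\infty .
$$

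The only point requiring any care is where the non-periodicity hypothesis is used: it is exactly what prevents the translates $S+k$ from coinciding, so that the sum above genuinely adds up the masses of infinitely many distinct atoms rather than overcounting a single one. That the hypothesis cannot simply be dropped is witnessed by Example~\ref{example: 1.3} with $S=\mathbb{Z}$ (a periodic, translation-invariant choice), where $X^\nu$ is not a.s.\ identically $1$. Beyond this observation the argument uses nothing more than additivity of $\nu$, so I do not anticipate any genuine obstacle.
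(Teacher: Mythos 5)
Your proof is correct and follows essentially the same route as the paper's: non-periodicity makes the translates $S+k$ pairwise distinct, translation invariance gives each the same positive mass, and the infinitude of $S$ yields infinitely many translates through any fixed site, forcing $\nu(\Sc_i)=\infty$ and hence $P(X^\nu_i=0)=0$. The only cosmetic difference is that you sum the intensity explicitly via $P(X^\nu_i=0)=e^{-\nu(\Sc_i)}$, whereas the paper phrases the same fact as "at least one of these translates occurs almost surely."
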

 
\begin{proof}
    Since $S$ is not periodic, all of its translates are distinct and have the same $\nu$-weight. We show $X^\nu(0)= 1$ almost surely. Since $S$ is infinite, there are an infinite number of translates of $S$  containing 0 and so at least one of these will occur almost surely.
\end{proof}

The next proposition says that the overlap property of the sets that $\nu$ charges describes pairwise correlations in a simple way.

\begin{proposition}\label{prop: correlation}
    Assume that \( X = (X_s)_{s \in S} = X^\nu .\)  Then, for any \( k,\ell \in S, \) we have $$P(X_k=0,X_\ell=0)=P(X_k=0) P(X_\ell=0) e^{\nu(\mathcal{S}_{k}\cap \mathcal{S}_{\ell})}.$$
    More generally, if $A$ and $B$ are disjoint subsets of $S$, we have
     
    \begin{equation*}
    	P(X_A \equiv 0 ,\,  X_B \equiv 0)= 
    P(X_A \equiv 0) P(X_B \equiv 0)
    e^{\nu(\mathcal{S}_A^\cup \cap \mathcal{S}_B^\cup)}.
    \end{equation*}
    \color{black}
    
\end{proposition}

\begin{proof} We prove only the first statement. The second statement is proved in the same way. By definition,
    \begin{align*}
        &
        P(X_k=0,X_\ell=0)=e^{-\nu(\mathcal{S}_{k}\cup \mathcal{S}_{\ell})}=
        e^{-[\nu(\mathcal{S}_{k})+\nu(\mathcal{S}_\ell)-\nu(\mathcal{S}_{k}\cap \mathcal{S}_\ell)]}\\&\qquad=
        P(X_k=0) P(X_\ell=0) e^{\nu(\mathcal{S}_{k}\cap \mathcal{S}_\ell)}.
    \end{align*}
\end{proof}

\begin{corollary}
For any $S$ and $\nu$, if $X^\nu$ is pairwise independent, then it is an independent process.
\end{corollary}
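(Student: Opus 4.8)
The plan is to extract from pairwise independence a structural constraint on $\nu$ using Proposition~\ref{prop: correlation}, and then recover full independence from the ``all-zero'' probabilities $P\bigl(X^\nu(A)\equiv 0\bigr)=e^{-\nu(\Sc_A^\cup)}$ recorded above. First, set aside any coordinate $k$ with $P(X_k=0)=0$: for such $k$ one has $X_k\equiv 1$ a.s., so $X_k$ is independent of everything and plays no role in the question. So I may assume $P(X_k=0)>0$ for every $k\in S$. For distinct $k,\ell$, the first identity of Proposition~\ref{prop: correlation} reads $P(X_k=0,X_\ell=0)=P(X_k=0)P(X_\ell=0)\,e^{\nu(\Sc_k\cap\Sc_\ell)}$, so pairwise independence forces $e^{\nu(\Sc_k\cap\Sc_\ell)}=1$, i.e. $\nu(\Sc_k\cap\Sc_\ell)=0$ for all $k\ne\ell$. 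Note also $\nu(\Sc_k)=-\log P(X_k=0)<\infty$ for each $k$.

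Next I would upgrade this to additivity of $\nu$ over finite unions of the sets $\Sc_k$: for every finite $A\subseteq S$, $\nu(\Sc_A^\cup)=\sum_{k\in A}\nu(\Sc_k)$. This is the routine fact that a measure is finitely additive over a union of sets with pairwise null intersections — list $A=\{k_1,\dots,k_m\}$ and replace $\Sc_{k_j}$ by $\Sc_{k_j}\setminus(\Sc_{k_1}\cup\dots\cup\Sc_{k_{j-1}})$, which changes it by a $\nu$-null set and makes the pieces disjoint. Combining with $P\bigl(X^\nu(A)\equiv 0\bigr)=e^{-\nu(\Sc_A^\cup)}$ yields
$$P\Bigl(\bigcap_{k\in A}\{X_k=0\}\Bigr)=\prod_{k\in A}e^{-\nu(\Sc_k)}=\prod_{k\in A}P(X_k=0)$$
for every finite $A\subseteq S$.

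Finally I would observe that, for $\{0,1\}$-valued random variables, the numbers $\bigl\{P(\bigcap_{k\in A}\{X_k=0\}):A\subseteq F\bigr\}$ determine the joint law of $(X_k)_{k\in F}$ for any finite $F$, via inclusion--exclusion: for $B\sqcup C=F$,
$$P\bigl(X_k=1\ (k\in B),\ X_k=0\ (k\in C)\bigr)=\sum_{D\subseteq B}(-1)^{|D|}\,P\Bigl(\bigcap_{k\in D\cup C}\{X_k=0\}\Bigr).$$
The product of the one-dimensional marginals of $(X_k)_{k\in F}$ has exactly the same all-zero-on-$A$ probabilities by the previous display, hence the law of $(X_k)_{k\in F}$ equals that product measure; since $F$ is an arbitrary finite subset of $S$, $X^\nu$ is an independent process. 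There is no serious obstacle here: everything follows quickly from Proposition~\ref{prop: correlation}, and the only points needing a little care are the trivial handling of coordinates with $P(X_k=0)=0$ and the (standard) additivity of $\nu$ over finite unions of essentially disjoint sets.
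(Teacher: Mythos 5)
Your proposal is correct and follows essentially the same route as the paper: use Proposition~\ref{prop: correlation} to deduce $\nu(\Sc_k\cap\Sc_\ell)=0$ from pairwise independence, conclude $\nu(\Sc_A^\cup)=\sum_{k\in A}\nu(\Sc_k)$, hence $P(X^\nu(A)\equiv 0)=\prod_{k\in A}P(X^\nu_k=0)$, and finish by inclusion--exclusion. Your explicit handling of coordinates with $P(X_k=0)=0$ and the spelled-out disjointification are minor refinements of details the paper leaves implicit.
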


\begin{proof}
	By inclusion-exclusion, it suffices to show that for any finite $A\subseteq S$, one has
	$$
		P(X^\nu_A \equiv 0)=
		\prod_{a\in A} P(X^\nu_a= 0).
	$$
	The assumption of pairwise independence together with Proposition~\ref{prop: correlation} implies that for $a,b\in A$, $a\neq b$, $\nu(\mathcal{S}_a\cap \mathcal{S}_b)=0$. 
	This yields
	$\nu(\mathcal{S}_A^\cup)=\sum_{a\in A}\nu(\mathcal{S}_a)$
	and hence
	$$
		P(X^\nu_A \equiv 0)=e^{-\nu(\mathcal{S}_A^\cup)}= e^{-\sum_{a\in A}\nu(\mathcal{S}_a)}=\prod_{a\in A} P(X^\nu_a= 0).
	$$
\end{proof}

The proof of the following useful lemma will be left
to the reader.

\begin{lemma}\label{lemma: 2.10new}
For any $S$, $\nu$ and $\nu'$, one has that
$$
X^{\nu+\nu'}\overset{d}{=} \max\{X^{\nu},X^{\nu'}\},
$$
where the latter two processes are assumed to be
independent.
\end{lemma}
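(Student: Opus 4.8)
The plan is to reduce the statement to the superposition (additivity) property of Poisson processes. Recall that if $Y$ and $Y'$ are independent Poisson processes on $\mathcal{P}(S)\setminus\{\emptyset\}$ with $\sigma$-finite intensity measures $\nu$ and $\nu'$, then their superposition $Y\cup Y'$ (the union of the two point configurations, counted with multiplicity) is a Poisson process with intensity measure $\nu+\nu'$; see~\cite{lp}. So first I would fix independent copies $Y^\nu$ and $Y^{\nu'}$ on a common probability space, realized as collections of nonempty subsets $\{B_j\}_{j\in I}$ and $\{B'_k\}_{k\in I'}$ of $S$, and set $Y\coloneqq Y^\nu\cup Y^{\nu'}$; by the above, $Y$ has the law of $Y^{\nu+\nu'}$.

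Next I would observe that the map sending a point configuration to the corresponding $\{0,1\}$-sequence — take the union of all the sets appearing, then read off the indicator — turns superposition into coordinatewise maximum. Concretely, with notation as above, $\bigl(\bigcup_{j\in I}B_j\bigr)\cup\bigl(\bigcup_{k\in I'}B'_k\bigr)=\bigcup_{j\in I}B_j\cup\bigcup_{k\in I'}B'_k$, so for every $i\in S$ we have $i\in\bigcup(\text{sets of }Y)$ if and only if $i\in\bigcup(\text{sets of }Y^\nu)$ or $i\in\bigcup(\text{sets of }Y^{\nu'})$. In the event language used in the excerpt this reads $Y\cap\Sc_i\neq\emptyset$ iff $Y^\nu\cap\Sc_i\neq\emptyset$ or $Y^{\nu'}\cap\Sc_i\neq\emptyset$. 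Hence the process built from $Y$ equals $\max\{X^\nu,X^{\nu'}\}$ coordinatewise, where $X^\nu$ (resp.\ $X^{\nu'}$) is the process built from $Y^\nu$ (resp.\ $Y^{\nu'}$).

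Finally, since $Y^\nu$ and $Y^{\nu'}$ are independent and $X^\nu$, $X^{\nu'}$ are (measurable) functions of $Y^\nu$, $Y^{\nu'}$ respectively, the processes $X^\nu$ and $X^{\nu'}$ are independent. Combining this with the previous paragraph gives $X^{\nu+\nu'}\overset{d}{=}(\text{process built from }Y)=\max\{X^\nu,X^{\nu'}\}$ with $X^\nu,X^{\nu'}$ independent, as claimed. I do not anticipate a genuine obstacle; the only point worth a word of care is that the superposition theorem and the measurability of the union map are being applied only under the standing hypothesis that $\nu$ (and $\nu'$) are $\sigma$-finite, which guarantees that the relevant point configurations are a.s.\ at most countable collections of sets, so the unions above are well defined.
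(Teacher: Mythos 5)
Your argument is correct: the superposition theorem for independent Poisson processes, combined with the observation that taking unions of the occurring sets converts superposition into coordinatewise maximum, is exactly the intended reasoning (the paper leaves this proof to the reader, and this is the standard argument it has in mind). The independence of $X^\nu$ and $X^{\nu'}$ as measurable functions of the independent underlying Poisson processes is also handled properly, so nothing is missing.
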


\begin{lemma}\label{lemma: 2.10}
    Given $\{0,1\}$-valued random variables \( X=(X_k)_{k \in [n]}\), if there is a nonnegative measure \( \nu \) on \( \mathcal{P}([n])\backslash \{\emptyset \}\) such that 
    \[
    P\bigl(  X(K) \equiv 0 \bigr) = e^{-\nu(\mathcal{S}_K^\cup)},\quad K \subseteq [n],
    \]
    then \( X^\nu \overset{d}{=} X.\)
\end{lemma}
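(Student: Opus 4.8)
The plan is to show that the distribution of $X^\nu$ is entirely determined by the quantities $P(X^\nu(K)\equiv 0)$ as $K$ ranges over subsets of $[n]$, and then to invoke the hypothesis. First I would recall the basic identity, established earlier in the excerpt, that
\[
P\bigl(X^\nu(K)\equiv 0\bigr) = e^{-\nu(\mathcal{S}_K^\cup)}, \qquad K\subseteq [n].
\]
Thus the hypothesis says precisely that $X^\nu$ and $X$ agree on all events of the form $\{X(K)\equiv 0\}$, i.e.\ on all events that some coordinate pattern of $0$'s is present (with no constraint on the other coordinates). So it suffices to prove that, for a $\{0,1\}$-valued random vector on a finite index set $[n]$, the collection of probabilities $\{P(X(K)\equiv 0)\}_{K\subseteq [n]}$ determines the law of $X$.

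The key step is an inclusion–exclusion argument. For a fixed configuration $x\in\{0,1\}^n$, let $A=\{k: x_k=1\}$ and $Z=\{k: x_k=0\}$. Then
\[
P(X=x) = P\Bigl(\{X(Z)\equiv 0\}\cap\bigcap_{k\in A}\{X_k=1\}\Bigr),
\]
and expanding the constraints $\{X_k=1\} = \{X_k=0\}^c$ via inclusion–exclusion over subsets $J\subseteq A$ gives
\[
P(X=x) = \sum_{J\subseteq A} (-1)^{|J|}\, P\bigl(X(Z\cup J)\equiv 0\bigr).
\]
Hence $P(X=x)$ is a fixed universal function of the numbers $P(X(K)\equiv 0)$, $K\subseteq [n]$. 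Applying this identity to both $X$ and $X^\nu$, and using that the two agree on all the $P(\,\cdot(K)\equiv 0)$ values by hypothesis, yields $P(X=x)=P(X^\nu=x)$ for every $x$, which is the claim.

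There is essentially no serious obstacle here; the only thing to be a little careful about is the bookkeeping in the inclusion–exclusion expansion and the observation that $\mathcal{S}^\cup_{Z\cup J}$ is the correct set appearing in the exponent (so that $P(X^\nu(Z\cup J)\equiv 0)=e^{-\nu(\mathcal S^\cup_{Z\cup J})}$ matches the term supplied by the hypothesis). One should also note at the outset that the measure $\nu$ is assumed to exist, so $X^\nu$ is well-defined and the identity $P(X^\nu(K)\equiv 0)=e^{-\nu(\mathcal S_K^\cup)}$ applies; the content of the lemma is just that this one family of marginals pins down the whole joint law. If one prefers to avoid inclusion–exclusion by hand, the same conclusion follows from the fact that the indicator functions $\ind_{\{X(K)\equiv 0\}}=\prod_{k\in K}(1-X_k)$, together with the constant function, span the full space of functions on $\{0,1\}^n$ (a standard Möbius/Fourier-type basis argument), so that equality of all their expectations forces equality in distribution.
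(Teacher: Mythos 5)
Your proof is correct and follows essentially the same route as the paper, which simply asserts that agreement on all events $\{X(K)\equiv 0\}$ forces agreement in distribution. Your inclusion--exclusion expansion just makes explicit the step the paper leaves to the reader.
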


\begin{proof}
    If \( \nu \) is nonnegative, \( X^\nu\) exists. Since \( X\) and \( X^\nu\) agree on events of the form \( X_K \equiv 0,\) they must agree on all events, and hence the desired conclusion follows.
\end{proof}

\begin{lemma}\label{lemma: unique signed}
    Let  \( X = (X_1,X_2,\dots, X_n)\) be 
    $\{0,1\}$-valued random variables
    such that  \( P(X \equiv 0) > 0 \)\color{black}.
    Then there is a unique signed measure \( \nu \) on \( \mathcal{P}([n])\smallsetminus \{ \emptyset \}\) that satisfies 
    \begin{equation}\label{eq: step 1}
        \nu\bigl(\mathcal{S}_{I}^\cup\bigr) = -\log P\bigl( X(I) \equiv 0 \bigr),\quad I \subseteq [n]
    \end{equation} 
    and it is given by~\eqref{eq: Mobius inversion result}.
    (Note that by Lemma~\ref{lemma: 2.10}, if such a nonnegative measure \( \nu\) exists, then \( X = X^\nu\).)
\end{lemma}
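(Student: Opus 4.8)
Since $[n]$ is finite, a signed measure $\nu$ on $\mathcal{P}([n])\setminus\{\emptyset\}$ is simply an assignment of real weights $c_T := \nu(\{T\})$ to the nonempty subsets $T\subseteq[n]$, and \eqref{eq: step 1} is a system of $2^n-1$ real linear equations in these $2^n-1$ unknowns — the equation indexed by $I=\emptyset$ reads $0 = -\log P(X(\emptyset)\equiv 0) = 0$ and carries no information. So the plan is to show this system has a unique solution and to exhibit it explicitly. Throughout I would write $g(I) := -\log P(X(I)\equiv 0)$, which is finite for every $I$ by the hypothesis $P(X(I)\equiv 0)>0$ and satisfies $g(\emptyset)=0$.

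First I would pass to complements to bring the system into standard form. Since $\mathcal{S}_{[n]}^\cup = \mathcal{P}([n])\setminus\{\emptyset\}$, the total mass of any solution must be $N := \sum_{\emptyset\neq T\subseteq[n]} c_T = g([n])$. For a general $I\subseteq[n]$, the complement of $\mathcal{S}_I^\cup = \{T : T\cap I\neq\emptyset\}$ inside $\mathcal{P}([n])\setminus\{\emptyset\}$ is exactly $\{T : \emptyset\neq T\subseteq I^c\}$, so \eqref{eq: step 1} is equivalent to $\sum_{\emptyset\neq T\subseteq I^c} c_T = g([n]) - g(I)$ for all $I\subseteq[n]$. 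Setting $J = I^c$ and $F(J) := \sum_{\emptyset\neq T\subseteq J} c_T$ (with $F(\emptyset)=0$), this is precisely the statement that $F(J) = g([n]) - g(J^c)$ for every $J\subseteq[n]$; note the right-hand side does vanish at $J=\emptyset$, consistently.

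Now I would invoke the fact that $c\mapsto F$ is the zeta transform on the Boolean lattice $\mathcal{P}([n])$ — the classical linear bijection inverted by Möbius inversion. This immediately gives both uniqueness and the formula: $c_T = \sum_{U\subseteq T}(-1)^{|T\setminus U|}\bigl(g([n]) - g(U^c)\bigr)$, and since $\sum_{U\subseteq T}(-1)^{|T\setminus U|} = 0$ for $T\neq\emptyset$ the $g([n])$-terms cancel, leaving the pure alternating sum of the quantities $-\log P(X(\,\cdot\,)\equiv 0)$ that is~\eqref{eq: Mobius inversion result}; at $T=\emptyset$ the formula returns $0$, matching the absence of an atom at $\emptyset$. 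For existence I would then simply define $\nu$ by these weights and run the computation backwards — $F(J) = g([n]) - g(J^c)$, hence $\nu(\mathcal{S}_I^\cup) = N - F(I^c) = g(I)$ — to verify \eqref{eq: step 1}. The final parenthetical claim is then immediate from Lemma~\ref{lemma: 2.10}. I do not expect a genuine obstacle here: the only steps needing care are the passage to complements (so that textbook Möbius inversion applies verbatim) and the check that $F(\emptyset)=0$ is compatible with the normalization $g(\emptyset)=0$, so that the reconstructed $\nu$ lives on the nonempty subsets as required.
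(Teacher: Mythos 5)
Your proof is correct and follows essentially the same route as the paper's: both rewrite the system \eqref{eq: step 1} in terms of complements so that it becomes a zeta transform $J \mapsto \sum_{\emptyset \neq T \subseteq J}\nu(T)$ on the Boolean lattice, then apply M\"obius inversion to obtain uniqueness and the explicit formula \eqref{eq: Mobius inversion result}, with the $\log P(X([n])\equiv 0)$ terms cancelling via the alternating-sum identity. Your extra remarks on the consistency of the $I=\emptyset$ equation and on verifying existence by running the bijection backwards are fine but not substantively different from the paper's argument.
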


\begin{proof}
    If a signed
    measure \( \nu \) exists then, since \( n \) is finite, we have
    \[
        \nu\bigl(\mathcal{S}_{I}^\cup\bigr)=\sum_{J \subseteq [n] \colon J \cap I \neq \emptyset}   \nu(J)
    \]
    and hence~\eqref{eq: step 1} is equivalent to the system of linear equations given by
    \begin{equation}\label{eq: step 2}
        \sum_{J \subseteq [n] \colon J \cap I \neq \emptyset}   \nu(J) =  -\log P\bigl( X(I) \equiv 0 \bigr),\quad I \subseteq [n].
    \end{equation} 
    The desired conclusion will thus follow if we can show that this system of linear equations always has a unique (possibly signed) solution \( (\nu(J))_{J \subseteq [n], J \neq \emptyset}.\) 
    To this end, we first rewrite~\eqref{eq: step 2} as
    \begin{align*}
        & 
        \sum_{\substack{J \subseteq [n]\smallsetminus I\colon \\ J \neq \emptyset}}   \nu(J)
        = 
         \sum_{\substack{J \subseteq [n]\colon \\ J \neq \emptyset }}   \nu(J) 
        -
        \sum_{J \subseteq [n] \colon J \cap I \neq \emptyset}   \nu(J) 
        \\&\qquad =
        -\log P\bigl( X([n]) \equiv 0 \bigr)
        -
        \Bigl( -\log P\bigl( X(I) \equiv 0 \bigr)\Bigr), \quad I\subseteq [n].
    \end{align*}
    Equivalently, this becomes
    \begin{align*}
        &
        \sum_{\substack{J \subseteq I \mathrlap{\colon} \\J \neq \emptyset}}   \nu(J)
        =
        -\log P\bigl( X([n]) \equiv 0 \bigr)
        +
        \Bigl( \log P\bigl( X([n]\smallsetminus I) \equiv 0 \bigr)\Bigr)
        , \quad I\subseteq [n].
    \end{align*}
    By the M\"obius inversion theorem, we see that this equation is equivalent to
    \begin{equation}\label{eq: Mobius inversion result}
        \begin{split}
        &\nu(K)
        = \sum_{I \subseteq K} (-1)^{|K|-|I|} 
        \Bigl(
        -\log P\bigl( X([n]) \equiv 0 \bigr)
        +
        \log P\bigl( X([n]\smallsetminus I) \equiv 0 \bigr)  \Bigr) 
        \\&\qquad
        = \sum_{I \subseteq K} (-1)^{|K|-|I|} 
         \log P\bigl( X([n]\smallsetminus I) \equiv 0 \bigr)  
        , \qquad \emptyset \neq K\subseteq [n].
        \end{split}
    \end{equation}
    This concludes the proof.
\end{proof}

\begin{lemma}
    Assume that \( X = X^\nu\) and that  \( (X_i)_{i \in [n]} \overset{d}{=} (X_{\sigma(i)})_{i \in [n]}\) for some \( \sigma \in S_n.\) Then \( \nu = \nu \circ \sigma.\)
\end{lemma}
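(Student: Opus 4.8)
The plan is to recover $\nu$ from the law of $X$ --- this recovery is exactly the M\"obius inversion of Lemma~\ref{lemma: unique signed} --- and then to check that the assumed distributional symmetry forces the recovered measure to be $\sigma$-invariant.

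First I would set up notation and make sure Lemma~\ref{lemma: unique signed} is available. Since $S=[n]$ is finite, $\mathcal{P}([n])\smallsetminus\{\emptyset\}$ is a finite set, so the $\sigma$-finite measure $\nu$ is in fact finite; hence $P\bigl(X(I)\equiv 0\bigr)=e^{-\nu(\mathcal{S}_I^{\cup})}>0$ for every $I\subseteq[n]$, and Lemma~\ref{lemma: unique signed} applies to $X$. Let $\sigma$ also denote the induced bijection $T\mapsto\sigma(T):=\{\sigma(i):i\in T\}$ of $\mathcal{P}([n])\smallsetminus\{\emptyset\}$, and let $\nu\circ\sigma$ be the (again finite, nonnegative) measure $A\mapsto\nu\bigl(\sigma(A)\bigr)$; the assertion $\nu=\nu\circ\sigma$ is then the same as $\nu(\{T\})=\nu(\{\sigma(T)\})$ for every nonempty $T\subseteq[n]$.

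Next I would translate the hypothesis into statements about the quantities $\nu(\mathcal{S}_I^{\cup})$. For $I\subseteq[n]$ the process $(X_{\sigma(i)})_{i\in[n]}$ vanishes on $I$ precisely when $X$ vanishes on $\sigma(I)$, so $(X_i)_{i\in[n]}\overset{d}{=}(X_{\sigma(i)})_{i\in[n]}$ gives $P\bigl(X(I)\equiv 0\bigr)=P\bigl(X(\sigma(I))\equiv 0\bigr)$, that is, $\nu(\mathcal{S}_I^{\cup})=\nu(\mathcal{S}_{\sigma(I)}^{\cup})$ for all $I$. On the other hand $\sigma\bigl(\mathcal{S}_I^{\cup}\bigr)=\{\sigma(T):T\cap I\neq\emptyset\}=\{T':T'\cap\sigma(I)\neq\emptyset\}=\mathcal{S}_{\sigma(I)}^{\cup}$, whence $(\nu\circ\sigma)(\mathcal{S}_I^{\cup})=\nu(\mathcal{S}_{\sigma(I)}^{\cup})=\nu(\mathcal{S}_I^{\cup})$ for every $I\subseteq[n]$. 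Thus $\nu$ and $\nu\circ\sigma$ are both signed measures on $\mathcal{P}([n])\smallsetminus\{\emptyset\}$ satisfying $\mu(\mathcal{S}_I^{\cup})=-\log P\bigl(X(I)\equiv 0\bigr)$ for all $I\subseteq[n]$, so by the uniqueness part of Lemma~\ref{lemma: unique signed} we conclude $\nu=\nu\circ\sigma$.

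I do not expect a genuine obstacle; the proof is short. The two points deserving a moment's care are that finiteness of $S$ upgrades $\sigma$-finiteness of $\nu$ to finiteness (this is what makes all $P(X(I)\equiv 0)$ positive, hence the reconstruction of $\nu$ from the law of $X$ unique), and the bookkeeping identity $\sigma(\mathcal{S}_I^{\cup})=\mathcal{S}_{\sigma(I)}^{\cup}$ for the set-level action of $\sigma$.
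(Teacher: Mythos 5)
Your proof is correct and follows essentially the same route as the paper: the paper likewise invokes the uniqueness statement of Lemma~\ref{lemma: unique signed} and observes that $X = X_\sigma = X^{\nu\circ\sigma}$, so that $\nu\circ\sigma$ is a second representing measure and must equal $\nu$. Your version merely spells out the details the paper leaves implicit (finiteness of $\nu$ on a finite index set, and the identity $\sigma(\mathcal{S}_I^{\cup})=\mathcal{S}_{\sigma(I)}^{\cup}$), both of which are handled correctly.
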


\begin{proof}
    From Lemma~\ref{lemma: unique signed}, we know that if \( X = X^\nu\) exists, then $\nu$ is unique. If \( X = X^\nu\) then \( X = X_\sigma = X^{\nu \circ \sigma}, \) and hence we must have \( \nu = \nu\circ \sigma.\)
\end{proof}

The proof of the following lemma is left to the reader, the third part of whose proof uses a simple compactness argument.

\begin{lemma}\label{lemma: finite to infinite}
Consider a process $X=\{X_s\}_{s\in S}$. 
\begin{enumerate}[label=(\alph*)]

    \item If \( X=X^\nu\)  and \( B \subseteq S, \) then there is \( \nu_B \) such that \( X|_{B} = X^{\nu_B}.\) Moreover, for any non-empty measurable subset \( \mathcal{A} \subseteq \mathcal{P}(B), \) we have \( {\nu_B(\mathcal{A}) = \nu\bigl( \{ A' \in \mathcal{P}(S) \colon  A' \cap B\in \mathcal{A} \} \bigr).}\)\label{item: finite to infinite i}

    \item If \( X=X^\nu\) and \( B \subseteq S,\) then there is a measure \( \nu_{B,0} \) on \( \mathcal{P}(B)\smallsetminus \{ \emptyset \} \) such that \( X| \bigl\{ X(B^c)\equiv 0 \bigr\} = X^{\nu_{B,0}}.\) Moreover, \( \nu_{B,0} = \nu|_{\mathcal{P}(B)}. \) \label{item: finite to infinite iii}
    
    \item\label{item: finite to infinite ii} If there exist $S_1\subseteq S_2\subseteq \ldots$ such that $S=\bigcup_i S_i$ and  $X_{S_n} = X^{\nu_n}$ for some $\nu_n$, then $X=X^\nu$ for some $\nu$. (The projection of $\nu$ on to each $S_n$ will simply be  $\nu_n$.)
\end{enumerate} 
\end{lemma}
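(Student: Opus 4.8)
The plan is to obtain (a) and (b) by transporting the Poisson process $Y^\nu$ through the obvious maps, and (c) by a projective‑limit (Kolmogorov‑type) construction in which compactness of $\{0,1\}^S$ does all the real work; so I will describe these three in turn and flag the gluing in (c) as the delicate point.

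For (a), consider the continuous map $\rho_B\colon \mathcal{P}(S)\to\mathcal{P}(B)$, $A'\mapsto A'\cap B$. Restrict $\nu$ to the open set $\{A'\colon A'\cap B\neq\emptyset\}$ and push it forward by $\rho_B$; call the result $\nu_B$, a measure on $\mathcal{P}(B)\smallsetminus\{\emptyset\}$. By the mapping theorem for Poisson processes, the image of $Y^\nu$ under $\rho_B$ — after discarding the sampled sets disjoint from $B$, which contribute nothing to $X^\nu|_B$ — is $Y^{\nu_B}$, and since $(\bigcup_j B_j)\cap B=\bigcup_j(B_j\cap B)$ we get $X^\nu|_B \overset{d}{=} X^{\nu_B}$. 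The asserted formula $\nu_B(\mathcal{A})=\nu(\{A'\colon A'\cap B\in\mathcal{A}\})$ is just the definition of push‑forward, the intersection with $\{A'\cap B\neq\emptyset\}$ being automatic because $\emptyset\notin\mathcal{A}$. For (b), conditioning $X^\nu$ on $\{X^\nu(B^c)\equiv 0\}$ is conditioning $Y^\nu$ on having no point in $\mathcal{S}_{B^c}^\cup$; by the restriction property of Poisson processes already invoked in the proof of Theorem~\ref{thm:DFK}, the conditional law of $Y^\nu$ is $Y^{\nu|_{(\mathcal{S}_{B^c}^\cup)^c}}$, and $(\mathcal{S}_{B^c}^\cup)^c\cap(\mathcal{P}(S)\smallsetminus\{\emptyset\})=\mathcal{P}(B)\smallsetminus\{\emptyset\}$. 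Hence $\nu_{B,0}:=\nu|_{\mathcal{P}(B)\smallsetminus\{\emptyset\}}$ works: conditionally all sampled sets are subsets of $B$, so the conditioned process is identically $0$ on $B^c$ and equals $X^{\nu_{B,0}}$ on $B$, as required.

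For (c), first use (a) to reduce to the case where every $S_n$ is finite (intersect each $S_n$ with the $n$‑th member of a finite exhaustion of $S$; restrictions of representable processes are representable by (a)). For finite $S_n$ the intensity $\nu_n$ has finite total mass, so $P(X(S_n)\equiv 0)>0$ for every $n$; hence by Lemma~\ref{lemma: unique signed} the representing measure on each finite $S_n$ is unique, which forces the measures produced by (a) to form a consistent family, $(\nu_m)|_{S_n}=\nu_n$ for $m\ge n$. (Should one prefer to avoid the uniqueness input, a diagonal argument using sequential compactness of $[0,\infty]^{\mathcal{P}(S_n)\smallsetminus\{\emptyset\}}$ extracts a subsequence along which the $(\nu_m)|_{S_n}$ converge coordinatewise to a consistent family still representing the $X_{S_n}$.) Now define $\nu$ on the algebra of cylinder sets $\{A\colon A\cap S_n\in\mathcal{A}\}$ (with $\emptyset\notin\mathcal{A}\subseteq\mathcal{P}(S_n)\smallsetminus\{\emptyset\}$) by $\nu(\{A\colon A\cap S_n\in\mathcal{A}\})=\nu_n(\mathcal{A})$; consistency makes this well defined and finitely additive, and this algebra generates the Borel $\sigma$‑algebra of $\mathcal{P}(S)\smallsetminus\{\emptyset\}$. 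The compactness input is that each basic cylinder $\{A\colon A\cap S_n=T\}$ with $T\neq\emptyset$ is a clopen subset of the compact space $\{0,1\}^S$ not containing $\mathbf{0}$, hence compact in $\mathcal{P}(S)\smallsetminus\{\emptyset\}$; consequently any partition of a cylinder into countably many cylinders is in fact finite, and countable additivity on the algebra is immediate. The Hahn--Kolmogorov extension theorem then yields a Borel measure $\nu$; it is $\sigma$‑finite because $\mathcal{P}(S)\smallsetminus\{\emptyset\}=\bigcup_n\mathcal{S}_{S_n}^\cup$ with $\nu(\mathcal{S}_{S_n}^\cup)=\nu_n(\mathcal{P}(S_n)\smallsetminus\{\emptyset\})=-\log P(X(S_n)\equiv0)<\infty$, and its projection onto each $S_n$ is $\nu_n$ by construction. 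Finally, for any finite $A\subseteq S$ choose $n$ with $A\subseteq S_n$; then $P(X^\nu(A)\equiv0)=e^{-\nu(\mathcal{S}_A^\cup)}=e^{-\nu_n(\mathcal{S}_A^\cup)}=P(X(A)\equiv0)$, so $X^\nu\overset{d}{=}X$ by Lemma~\ref{lemma: 2.10} applied on each $S_n$.

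The crux — and the only place needing more than bookkeeping — is the gluing in (c): one must verify that the cylinder pre‑measure is genuinely countably additive, so that Hahn--Kolmogorov applies, and that the resulting measure is $\sigma$‑finite rather than carrying an infinite atom. Both points are settled by the observation that cylinders avoiding $\mathbf{0}$ are compact, but this presupposes that one already has a consistent representing family; the potential obstruction is the degenerate case $X_i\equiv 1$ for some $i$, which would produce an infinite atom, and this is exactly what is excluded by the hypothesis once the $S_n$ are taken finite, since a finite set containing such an $i$ admits no representable restriction.
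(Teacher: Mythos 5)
Your proofs of (a) and (b) are correct and are surely the intended ones: (a) is the mapping theorem for Poisson processes applied to \( A'\mapsto A'\cap B\) (after discarding points disjoint from \( B\)), and (b) is the same restriction argument already used in the proof of Theorem~\ref{thm:DFK}. For (c) the paper offers no proof beyond the remark that it uses ``a simple compactness argument'', and you supply compactness exactly where it is needed: consistency of the finite-dimensional intensities (via uniqueness from Lemma~\ref{lemma: unique signed}, with a diagonal extraction as fallback), and countable additivity of the cylinder pre-measure (cylinders avoiding \( \mathbf{0}\) are clopen in the compact space \( \{0,1\}^S\), hence compact, so any countable disjoint cylinder cover is finite). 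The substance is right and consistent with the paper's hint.

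Two points need repair. First, a cosmetic one: the collection \( \bigl\{ \{A\colon A\cap S_n\in\mathcal{A}\}\colon \emptyset\notin\mathcal{A}\subseteq\mathcal{P}(S_n) \bigr\}\) is not an algebra on \( \mathcal{P}(S)\smallsetminus\{\emptyset\}\), since the complement of such a set corresponds to a family containing \( \emptyset\); you should either work inside each \( \mathcal{S}_{S_n}^\cup\) separately (each has finite, consistent total mass \( -\log P(X(S_n)\equiv 0)\), so on each piece you are extending a projective system of finite measures) or pass to the generated ring. Second, and more substantively, your dismissal of the degenerate case is incorrect as stated: if \( X_i\equiv 1\) for some \( i\), the hypothesis of (c) is \emph{not} automatically violated, because the given \( S_n\) may be infinite and \( X_{S_n}\) can then be representable by a genuinely \( \sigma\)-finite \( \nu_n\) (e.g.\ countably many unit atoms on distinct infinite sets all containing \( i\)). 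It is only after your reduction to finite index sets that the projected intensity acquires an infinite atom, at which point \( P(X(I)\equiv 0)=0\) for \( I\ni i\) and Lemma~\ref{lemma: unique signed} no longer applies, so the consistency step breaks. You need to handle \( D=\{i\colon P(X_i=1)=1\}\) separately, or, as the paper does in the proof of Lemma~\ref{lemma: closed}, allow the finite-dimensional intensities to take the value \( \infty\) and check that the gluing still goes through. This is an edge case the paper itself glosses over, and it does not affect the main line of your argument.
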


We first point out that when $n=2$, provided
we have positive association, $X$ is always of this form. In particular, one can check that
\( X = X^\nu,\) where for \( \emptyset\neq J \subseteq \{ 1,2 \}\) we set \( \nu(J) = -\log\bigl(1-p(J)\bigr) \) where
    \begin{equation*}
        \begin{cases}
            p(\{1,2\}) = 1 - P(X_1=0) P(X_2=0)/P(X_1=X_2=0) \cr 
            p(\{1\}) = 1-(P(X_1=X_2=0))/P(X_2=0) \cr 
            p(\{2\}) = 1-(P(X_1=X_2=0))/P(X_1=0).
        \end{cases}
    \end{equation*} 

    Sticking with $n=2$ in the nonpositively associated case, 
    it is interesting to see which sets the representing signed 
    measure $\nu$ gives negative weight to.

    \begin{example}
    Let \( X\) be \( (1,0) \) or \( (0,1) \) each with probability \((1-\varepsilon)/2 \) and equal to \( (0,0) \) or \( (1,1) \) each with probability \( \varepsilon/2,\) where \( \varepsilon<1/2.\) Then
    \[
    P \bigl( X(1)=0 \bigr) = P\bigl( X(2) = 0 \bigr) = 1/2
    \quad
    \text{and}
    \quad
    P\bigl( X(\{1,2\}) \equiv 0 \bigr) = \varepsilon /2.
    \]
    Consequently,
    \[
    \nu(\mathcal{S}_1\cup \mathcal{S}_2) = -\log \varepsilon/2 = \log 2 - \log \varepsilon 
    =
    2\nu\bigl( \{ 1 \} \bigr) + \nu \bigl( \{ 1,2 \} \bigr)
    \]
    and
    \[
    \nu(\mathcal{S}_1) = -\log 1/2 = \log 2 = \nu\bigl( \{ 1 \} \bigr) + \nu \bigl( \{ 1,2 \} \bigr)
    \]
    and hence
    \[
    \nu\bigl( \{ 1\} \bigr) =  - \log \varepsilon  >0
    \quad
    \text{and}
    \quad
    \nu\bigl( \{ 1,2\}\bigr) =   \log 2 + \log \varepsilon<0.
    \] 
\end{example}

When we now move to $n=3$, it is already
the case that positive association does not
imply that $X$ is of our form as the following example shows.

\begin{example}
    Choose \( \sigma \in S_3\) uniformly at random, and define \( X = (X_1,X_2,X_3) \) by \( X_j = \mathbf{1}({\sigma(j)=j}).\) Then
    \begin{equation*}
        \begin{cases}
            P(X \equiv 1) = 1/6 \cr 
            P\bigl(X = (1,1,0)\bigr) = P\bigl(X = (1,0,1) \bigr)= P\bigl(X = (0,1,1) \bigr) = 0 \cr 
            P\bigl(X = (1,0,0)\bigr) = P\bigl(X = (0,1,0) \bigr) = P\bigl(X = (0,0,1)\bigr) = 1/6 \cr 
            P(X \equiv 0) = 1/3.
        \end{cases}
    \end{equation*}
    The random vector \( X\) defined above is known to be positively associated (see, e.g., \cite{fds})\color{black}. On the other hand, one easily verifies that it is not of our form.
\end{example}

\begin{remarks}\mbox{}
\begin{enumerate}[label=(\roman*)] 
	\item More generally, Fishburn, Doyle and Shepp~(\cite{fds}) proved that if we choose \( \sigma \in S_n\) uniformly at random and define \( X = (X_1,X_2,\dots, X_n) \) by \( X_j = \mathbf{1}({\sigma(j)=j}), \) then  \( X \) is positively associated. \color{black}
    \item Jeff Kahn (\cite{Kahn}) proved the much stronger and much more difficult fact that the above random vector cannot be expressed as an increasing function of i.i.d.\ random variables. Nikita Gladkov~(\cite{g2023}) has extended this result further by showing that the above random vector cannot be obtained as the limit of increasing functions of i.i.d.\ random variables.
    \color{black}
    \item We will see another example later on of a positively associated process which is not in  \( \mathcal{R}\) for $n=3$; it will, in fact, be an average of two product measures.
\end{enumerate} 
\end{remarks}

We provide a further interesting example for
$n=3$ which is positively associated but
not in \( \mathcal{R}.\)

\begin{example}
    Let \( X,Y\) be i.i.d.\ 0 or 1 each with probability $1/2$. Consider \( (X,Y,XY).\)
    This is positively associated since the vector is given by increasing functions of i.i.d.\ random variables. Next,
if \( XY=0,\) then either \( X\) or \( Y \) is equal to zero. Consequently,
    \[
    P(X=Y=1 \mid XY=0) = 0
    \]
    and hence \( (X,Y,XY)\) is not downward FKG. By Theorem~\ref{theorem:DFK}, we conclude that this is
     not in \( \mathcal{R}.\)

    The unique signed measure \( \nu\) which satisfies \( X = X^\nu\) is given by
    \begin{equation*}
        \begin{cases}
            \nu \bigl( \{ 1 \} \bigr) = \nu \bigl( \{ 2 \} \bigr) = \log 2 \cr 
            \nu \bigl( \{ 3 \} \bigr) = \nu\bigl( \{1,3 \} \bigr) = \nu \bigl( \{2,3 \} \bigr)  = 0 \cr 
            \nu\bigl( \{ 1,2 \} \bigr) = \log 3/4 <0\cr 
            \nu\bigl(\{ 1,2,3 \}\bigr) = \log 4/3.
        \end{cases}
    \end{equation*} 
\end{example}

The next result tells us that with some
additional symmetries, positive association
implies that we are Poisson generated when \( n=3.\)

\begin{theorem}\label{theorem: four equal}
    Consider a probability measure $\mu$ on $\{0,1\}^3$ which is invariant under permutations and interchanging 0 and 1. Then the following are equivalent.
    \begin{enumerate}[label=(\alph*)]
        \item $\mu$ has positive association.\label{item: prop a} 
        \item $\mu$ is an increasing function of i.i.d.\ random variables.\label{item: prop b} 
        \item $\mu$ satisfies the FKG property.\label{item: prop c} 
        \item $\mu$ is in \( \mathcal{R}.\)       \label{item: prop d} 
    \end{enumerate} 
\end{theorem}

\begin{proof}
    The set of measures \( \mu\) as above is just a one parameter family since $p_1\coloneqq P(X \equiv 1) \leq 1/2$ determines the measure given all of the symmetries.  Since~\ref{item: prop c} implies~\ref{item: prop b} implies~\ref{item: prop a} and~\ref{item: prop d}  implies~\ref{item: prop b} implies~\ref{item: prop a} in general, we need only show that~\ref{item: prop a} implies~\ref{item: prop c} and~\ref{item: prop a} implies~\ref{item: prop d}. 

    It is elementary to check that $\mu$ is positively associated if and only if $p_1\ge 1/8$. Under this assumption, it is easy to verify that the FKG property holds (one way to see this is that the model is then the ferromagnetic Curie-Weiss model).

    To see that~\ref{item: prop a} implies~\ref{item: prop d}, one simply checks that the following $\nu$ measure works. Letting $p_2\coloneqq P\bigl(X(1)=X(2)=1,\, X(3)=0\bigr),$ one verifies that \( p_2 = (1-2p_1)/6\) which is then at most $1/8$.
    $\nu$ gives each of the three singletons weight $\log(\frac{p_1+p_2}{p_1})$, each of the three doubletons weight $\log(\frac{p_1}{2(p_1+p_2)^2})$ and the unique three element set weight $\log(\frac{8(p_1+p_2)^3}{p_1})$. One can check that the first and third terms are always non-negative while the second term is non-negative if and only if $p_1\ge 1/8$.
\end{proof}

Poisson representable processes\color{black}{} have a similar flavor to the so-called divide and color model (see~\cite{st})
but they are certainly different.  In the latter model, one takes a random partition of $S$ (with any distribution) and then assigns
all the elements in each partition element 
either 1 or 0 with probability $p$ and $1-p$.
This is done independently for different clusters. We now give some examples
illustrating the difference between these concepts.

\begin{example}[Example 2.17 in~\cite{st}]
    Consider the divide and color process \( X \) corresponding to the two partitions, \( (12,3,4)\) and \( (1,2,34) \) being chosen with equal probability. Letting \( A = \{ X_1=X_2 = 1 \}\) and \( B = \{ X_3 = 
    X_4=1 \}\), one checks that these are increasing but negatively correlated events and hence this does not have positive association. Consequently,  \( X \notin \mathcal{R}\) by Theorem~\ref{theorem:DFK}. 
\end{example}

\begin{example}
    \( X_1,X_2, X_3,X_4\) be i.i.d.\  with \(P(X_1=0)=1/2.\)  For $n \in \{1,2,3\},$ let \( Y_n \coloneqq \max(X_n,X_{n+1}).\) We first leave it to the reader to check that 
    $(Y_1,Y_2,Y_3)$ is Poisson generated by using  
    $$
    \nu \bigl( \{ 1,2 \} \bigr) =
    \nu \bigl( \{ 2,3 \} \bigr) =
    \nu \bigl( \{ 1 \} \bigr) =
    \nu \bigl( \{ 3 \} \bigr) 
    = \log 2.
    $$
    However, we now argue that \( (Y_1,Y_2,Y_3)\) is not a divide and color process.
    To see this, note that \( P (Y_1 = 1) = 3/4\) and hence any divide and color model  would need to be made with \( p = 3/4.\) 
    Assume \( (Y_1,Y_2,Y_3)\) is a divide and color model.
    Since \( Y_1\) and \( Y_3\) are independent, they cannot be in the same partition element. Hence the only 
    possible partitions are \( (1,2,3), (12,3), (1,23)\).
    By symmetry, these are given masses \( p ,\) \( (1-p)/2, \) and \( (1-p)/2\) for some $p$. This implies that 
    \[
        P(Y_1=Y_2=Y_3 = 0 ) = p(1/4)^3 + (1-p)(1/4)^2.
    \]
    Since, by definition, we have \( P(Y_1=Y_2=Y_3= 0) = 1/16,\) it follows that \( p = 0.\)
    Next, note that
    \[
        1/8 = P(Y_1=Y_2=0) = 1/2 (1/4) + 1/2(1/4)^2 = 1/8 + 1/32,
    \]
    a contradiction.
\end{example}

The following lemma states that the set $\mathcal{R}$ is closed in the set of all random
vectors. 
\begin{lemma}\label{lemma: closed}
    Let \( S \)  be countable 
    and let  \( X_n \in \{ 0,1 \}^{S}\) be a sequence of random vectors that converges in distribution 
    to a random vector \( X \in  \{ 0,1 \}^{S}.\)
    If \( X_n \in \mathcal{R}\) for every 
    \( n ,\)  then \( X \in \mathcal{R}.\)
\end{lemma}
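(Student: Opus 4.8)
The plan is to reduce the general countable case to the finite case and then use a compactness argument on the measures $\nu$. First, I would handle the case of finite $S$. Suppose $|S| < \infty$ and $X_n = X^{\nu_n} \to X$ in distribution. Since $S$ is finite, convergence in distribution means pointwise convergence of the atom probabilities, and in particular $P(X_n(K) \equiv 0) \to P(X(K) \equiv 0)$ for every $K \subseteq S$. By the formula $P(X^{\nu_n}(K) \equiv 0) = e^{-\nu_n(\mathcal{S}_K^\cup)}$, this gives $\nu_n(\mathcal{S}_K^\cup) \to -\log P(X(K) \equiv 0) \in [0,\infty]$ for every $K$. Taking $K = \{i\}$ and combining via inclusion–exclusion (or directly the expression for $\nu_n(\{J\})$ in terms of the $P(X_n(\cdot)\equiv 0)$ from the Möbius inversion in Lemma~\ref{lemma: unique signed}), the total masses $\nu_n(\mathcal{P}(S)\setminus\{\emptyset\}) = \nu_n(\mathcal{S}_S^\cup)$ converge to $-\log P(X \equiv 0)$.

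There are two subcases. If $-\log P(X(K)\equiv 0) = \infty$ for some $K$, then $P(X(K)\equiv 0) = 0$; one checks directly that such an $X$ is always in $\mathcal{R}$, e.g.\ by putting infinite mass on an appropriate singleton or coordinate subset (concretely, if $P(X_i = 1) = 1$ then coordinate $i$ can be set deterministically to $1$ by an infinite atom on $\{i\}$, and one proceeds on the remaining coordinates by induction on $|S|$; alternatively one notes $\mathcal{R}$ is stable under this operation). In the main subcase, all the limiting quantities $-\log P(X(K)\equiv 0)$ are finite, so the total masses $\nu_n(\mathcal{P}(S)\setminus\{\emptyset\})$ are bounded. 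Since $\mathcal{P}(S)\setminus\{\emptyset\}$ is a finite set, the vectors $(\nu_n(\{J\}))_{\emptyset \neq J \subseteq S}$ lie in a bounded subset of $[0,\infty)^{2^{|S|}-1}$ once we know boundedness (the Möbius formula expresses each $\nu_n(\{J\})$ as a fixed linear combination of the convergent quantities $-\log P(X_n(\cdot)\equiv 0)$, so in fact $\nu_n(\{J\})$ converges to a limit $\nu(\{J\})$ for each $J$). The limit $\nu$ is automatically nonnegative as a limit of nonnegative numbers, and by continuity $e^{-\nu(\mathcal{S}_K^\cup)} = P(X(K)\equiv 0)$ for all $K$. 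By Lemma~\ref{lemma: 2.10}, $X = X^\nu$, so $X \in \mathcal{R}$.

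For general countable $S$, I would use part~\ref{item: finite to infinite ii} of Lemma~\ref{lemma: finite to infinite}. Enumerate $S = \{s_1, s_2, \dots\}$ and set $S_k = \{s_1,\dots,s_k\}$. For each fixed $k$, the restrictions $(X_n)|_{S_k}$ converge in distribution to $X|_{S_k}$ (marginals of convergent laws converge), and by Lemma~\ref{lemma: finite to infinite}\ref{item: finite to infinite i} each $(X_n)|_{S_k} \in \mathcal{R}$; hence by the finite case $X|_{S_k} \in \mathcal{R}$ for every $k$. Now Lemma~\ref{lemma: finite to infinite}\ref{item: finite to infinite ii} (whose proof, as noted in the text, is a compactness argument) gives $X = X^\nu$ for some $\nu$ on $\mathcal{P}(S)\setminus\{\emptyset\}$, i.e.\ $X \in \mathcal{R}$.

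The main obstacle is the finite case, specifically ensuring the representing measures $\nu_n$ do not escape to infinity in an uncontrolled way; this is handled by observing that finiteness of $P(X \equiv 0) > 0$ forces the total masses to stay bounded, after which the explicit Möbius-inversion formula of Lemma~\ref{lemma: unique signed} pins down the limit and its nonnegativity. The degenerate subcase where some $P(X(K)\equiv 0)$ vanishes requires a separate (easy) direct verification that such processes lie in $\mathcal{R}$. Once the finite case is in hand, the passage to countable $S$ is a routine application of the already-stated Lemma~\ref{lemma: finite to infinite}.
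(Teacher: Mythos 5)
Your proposal is correct and follows essentially the same route as the paper: reduce to finite \(S\), pass to a limit of the representing measures \(\nu_n\), and then invoke Lemma~\ref{lemma: finite to infinite} for countable \(S\). The only difference is in the technical implementation of the finite case: the paper obtains the limiting \(\nu\) by extracting a convergent subsequence in the compact space \([0,\infty]^{2^{|S|}-1}\) (thereby absorbing your degenerate subcase by simply allowing atoms of infinite mass), whereas you identify the limit explicitly via the M\"obius-inversion formula of Lemma~\ref{lemma: unique signed} and treat the coordinates with \(P(X_i=1)=1\) separately.
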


\begin{proof}
    Assume first that \( |S|< \infty.\)
    For \( n \geq 1, \) since \( X_n \in \mathcal{R},\)  there is a measure \( \nu_n\) on \( \mathcal{P}(S) \smallsetminus \{ \emptyset \} \) such that \( X_n = X^{\nu_n}.\) Allowing now our measures to take the value $\infty$, we can extract a convergent
    subsequence \( (\nu_{n'})\) of \( (\nu_n)\) converging to some $\nu$
    which is allowed to take the value $\infty$.
    Now \(    X_{n'}=X^{\nu_{n'}}\) converges to both $X$ and to $X^\nu$ and hence 
     $X\in \mathcal{R}$. 
    Applying Lemma~\ref{lemma: finite to infinite}, we obtain the desired conclusion for any  \(S.\) 
\end{proof}

It turns out that domination from below by product measures for translation invariant processes on $\mathbb{Z}^d$ which belong to
\( \mathcal{R}\) has a simple characterization.

\begin{proposition}\label{proposition: domination from below}
    Let \( \nu\) be a translation invariant measure
    on $\mathcal{P}(\mathbb{Z}^d)\smallsetminus \{ \emptyset \}.$ Then \( X^\nu \geq   \Pi_p\) if and only if  \[\nu( \mathcal{S}_A^\cup) \geq -|A| \log (1-p)\] for all boxes \( A\) of the form $\{-n,\ldots,n\}^d$.
\end{proposition}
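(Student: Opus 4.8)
The plan is to first pass to probabilistic language. Recall the paper's observation that $P\bigl(X^\nu(A)\equiv 0\bigr)=e^{-\nu(\mathcal{S}_A^\cup)}$ for finite $A$, while $P\bigl(\Pi_p(A)\equiv 0\bigr)=(1-p)^{|A|}=e^{-a|A|}$ with $a:=-\log(1-p)$; so the asserted bound for a box $A$ is exactly $P(X^\nu(A)\equiv 0)\le P(\Pi_p(A)\equiv 0)$. The \emph{necessity} (``only if'') direction is then immediate: if $X^\nu\ge\Pi_p$, then since $\{x:x(A)\equiv 0\}$ is a decreasing event we get $P(X^\nu(A)\equiv 0)\le P(\Pi_p(A)\equiv 0)$ for \emph{every} finite $A$, in particular for boxes.

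For \emph{sufficiency}, the strategy is to build a coupling. Say a \emph{fractional selection rule} is $q(i,T)\ge 0$, defined for $i\in T$, with $\sum_{i\in T}q(i,T)=1$. Given such a $q$, consider the Poisson process on $\{(i,T):i\in T\}$ with intensity $q(i,T)\,d\nu(T)$: its $T$-marginal is a copy of $Y^\nu$, while the process obtained by keeping only the first coordinates is a Poisson process on singletons of intensity $\nu'$, where $\nu'(\{i\})=\int_{T\ni i}q(i,T)\,d\nu(T)$. Since $i\in T$ at each atom, this couples $X^\nu$ with $X^{\nu'}$ so that $X^\nu\ge X^{\nu'}$, and $X^{\nu'}$ is the product measure with density $1-e^{-\nu'(\{i\})}$ at $i$. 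Hence, if $q$ can be chosen so that $\nu'(\{i\})\ge a$ for all $i$, then $X^{\nu'}\ge\Pi_p$ and therefore $X^\nu\ge\Pi_p$. (One checks conversely that existence of such a $q$ forces $\nu(\mathcal S_A^\cup)=\int\mathbf 1[T\cap A\ne\emptyset]\,d\nu\ge\sum_{i\in A}\nu'(\{i\})\ge a|A|$ for all finite $A$, so this route really is the right one; on a finite ground set the existence of $q$ is precisely Hall's condition, but we will not need that here.)

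It remains to produce such a $q$ from the box condition, and this is the step I expect to be the main obstacle; it is also the only place translation invariance is used. First reduce to $\nu$ supported on finite sets: if $\nu$ charges an infinite non-periodic set then $X^\nu\equiv 1$ (by the translation-invariance proposition preceding Proposition~\ref{prop: correlation}) and there is nothing to prove; if $\nu(\mathcal S_0)=\infty$ then again $X^\nu\equiv 1$; otherwise the infinite sets in the support are periodic, and a short mass-transport argument (a periodic set meets $B_n:=\{-n,\dots,n\}^d$ in $\gtrsim n$ points, together with dominated convergence using $\nu(\mathcal S_0)<\infty$) shows they contribute only $o(|B_n|)$ to $\nu(\mathcal S_{B_n}^\cup)$, so by Lemma~\ref{lemma: 2.10new} they may be discarded while retaining $\liminf_n|B_n|^{-1}\nu(\mathcal S_{B_n}^\cup)\ge a$. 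Now take the translation-invariant rule $q(i,T):=1/|T|$. Summing over $B_n$ and using translation invariance,
\[
|B_n|\,\nu'(\{0\})=\sum_{i\in B_n}\nu'(\{i\})=\int \frac{|T\cap B_n|}{|T|}\,d\nu(T)=\nu(\mathcal S_{B_n}^\cup)-\mathrm{Def}_n,\qquad \mathrm{Def}_n:=\int \mathbf 1[T\cap B_n\ne\emptyset]\,\frac{|T\setminus B_n|}{|T|}\,d\nu(T).
\]
The crux is the estimate $\mathrm{Def}_n=o(|B_n|)$. Writing $\mathbf 1[T\cap B_n\ne\emptyset]=\sum_{i\in T\cap B_n}|T\cap B_n|^{-1}$ and using translation invariance once more, one gets, for each $D\ge 1$,
\[
\mathrm{Def}_n\ \le\ |B_n|\,g(D)+|B_n\setminus B_{n-D}|\,\nu(\mathcal S_0),\qquad g(D):=\nu\bigl(\{T\ni 0:\diam T\ge D\}\bigr),
\]
since a set meeting $B_{n-D}$ that is not contained in $B_n$ has diameter at least $D$. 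As $\nu$ is supported on finite sets, $g(D)\to 0$ as $D\to\infty$, and $|B_n\setminus B_{n-D}|/|B_n|\to 0$ for fixed $D$; dividing by $|B_n|$, letting $n\to\infty$ and then $D\to\infty$ gives $\mathrm{Def}_n=o(|B_n|)$. Hence $\nu'(\{0\})=|B_n|^{-1}\nu(\mathcal S_{B_n}^\cup)-o(1)\to\ \ge a$, so $\nu'(\{0\})\ge a$, and by translation invariance $\nu'(\{i\})\ge a$ for all $i$. With this $q$ the coupling of the second paragraph yields $X^\nu\ge\Pi_p$, completing the proof.
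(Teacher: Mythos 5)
Your ``only if'' direction and the coupling machinery are sound: the marked Poisson process with intensity $q(i,T)\,d\nu(T)$ does couple $X^\nu$ above the product process $X^{\nu'}$, and for $\nu$ supported on finite sets the mass-transport computation with $q(i,T)=1/|T|$, including the deficit estimate $\mathrm{Def}_n=o(|B_n|)$, is correct. The genuine gap is the reduction to the finite-set case. Your trichotomy is: (1) $\nu$ has an atom at an infinite non-periodic set, (2) $\nu(\mathcal{S}_0)=\infty$, (3) otherwise all infinite sets in the support are periodic. Case (3) does not follow from the failure of (1) and (2): a translation-invariant $\nu$ can assign positive but \emph{non-atomic} mass to the collection of infinite non-periodic sets while $\nu(\mathcal{S}_0)<\infty$ --- the paper's own example of random interlacements restricted to the $x$-axis is exactly such a measure, carried by infinite, non-periodic, zero-density sets with no atoms. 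The proposition preceding Proposition~\ref{prop: correlation} applies only when a single infinite non-periodic set has positive mass, so it gives nothing here, and for such $\nu$ the rule $q(i,T)=1/|T|$ is undefined. Worse, the method cannot be patched by a cleverer selection rule: for any translation-covariant sub-probability rule $q(i,T)=m(T-i)$ with $\sum_{i\in T}m(T-i)\le 1$, a mass-transport identity gives $\int_{\mathcal{S}_0}m(T)\,|T|\,d\nu(T)=\int_{\mathcal{S}_0}\bigl(\sum_{i\in T}m(T-i)\bigr)d\nu(T)\le\nu(\mathcal{S}_0)<\infty$, so if $|T|=\infty$ for $\nu$-a.e.\ $T$ in the support then $m=0$ a.e.\ and $\nu'(\{0\})=0$. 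Thus one-point-per-atom extraction is structurally blind to an infinite-set component, and as written your proof does not cover general translation-invariant $\nu$.

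The gap does look repairable, but it needs an additional argument rather than a citation of the periodicity proposition: when $\nu(\mathcal{S}_0)<\infty$, one can show the infinite-set part of $\nu$ contributes only $o(|B_n|)$ to $\nu(\mathcal{S}_{B_n}^\cup)$ (decompose $\nu(\mathcal{S}_{B_n}^\cup)$ according to the lexicographically least point of $T\cap B_n$ and note that $\nu$-a.e.\ infinite set has no lexicographic minimum, since otherwise $\nu(\mathcal{S}_0)=\int_{\{\mathrm{lexmin}\,T=0\}}|T|\,d\nu=\infty$); it can then be discarded exactly as you discard the periodic sets, after which your argument goes through. For comparison, the paper's proof is entirely different and essentially one line: $X^\nu$ is downward FKG by Theorem~\ref{thm:DFK}, and \cite[Theorem~4.1]{ls} characterizes domination of $\Pi_p$ for translation-invariant downward FKG measures precisely by the box condition $P(X(A)\equiv 0)\le(1-p)^{|A|}$. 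Your route, once the reduction is fixed, is self-contained and produces an explicit monotone coupling, but it does considerably more work than the citation.
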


\begin{proof}
    This follows immediately from Theorem~\ref{theorem:DFK} and~\cite[Theorem 4.1]{ls} using the fact that for any box \( A,\) \( P\bigl(X^\nu(A) \equiv 0 \bigr) = e^{-\nu(\mathcal{S}_A^\cup)}.\)
\end{proof}

\section{Markov and renewal processes}\label{sec: Markov} 

In this section, we begin by proving the following result, which shows that all positively associated Markov chains on \( \{ 0,1 \}^{\mathbb{Z}} \) are in \( \mathcal{R}.\)

To simplify notation in what follows, given a stationary process \( X, \) we define
\[
    c_k \coloneqq P\bigl( X_0=0,\,  X_k=0 \bigr),\quad k \geq 0,
\]
Note that if \( c_0 = P\bigl(X_0=0 \bigr) > 0,\) then by positive association, we have \( c_k \geq c_0^2 > 0 \) for all \( k >0.\)

\begin{theorem}\label{theorem: Markov}
    Let \( X\) be a non-trivial stationary positively associated Markov chain on \( \{ 0,1 \}^{\mathbb{Z}}.\) Then \( X \in \mathcal{R}\) and \( \nu \) is given by 
    \begin{equation}\label{eq: ren nu def Markov}
        \nu(K)=
        \begin{cases}
            \log \frac{c_{|K|-1}c_{|K|+1}}{c_{|K|}^2} &\text{if } K \text{ is a finite interval, and} \cr
            0 &\text{otherwise.}
        \end{cases}
    \end{equation}
\end{theorem} 

\begin{remark}\label{remark: p and r MC}
We note that if \( X \) is a non-constant positively associated \( \{ 0,1 \} \)-valued Markov chain, then its transition matrix can be written as
\begin{equation}\label{eq: pos associated markov chain} 
    P = \begin{pmatrix}
        p_{00}& p_{01} \\ p_{10} & p_{11}
    \end{pmatrix} = \begin{pmatrix}
        1-p(1-r) & p(1-r) \\ pr & 1-pr
    \end{pmatrix},
    \end{equation}
    for some \( p,r \in (0,1).\) Here \( p \) is the probability that the Markov chain rerandomizes (otherwise it stays fixed) and \( r\) is the probability it moves to zero when it rerandomizes. 
    Hence, as easily checked,
    \begin{equation}\label{eq: ck for markov}
        c_k = r (1-p)^k + r^2 \bigl(  1-(1-p)^{k} \bigr), \quad k \geq 0.
    \end{equation}
\end{remark}

The proof of Theorem~\ref{theorem: Markov} will use the following lemma involving renewal processes, which we now define.

\begin{definition}
    Let \( X \) be non-trivial a \( \{0,1\}\)-valued process on \( \mathbb{Z}.\) We say that \( X \) is a renewal process (with respect to 0) if there is a sequence \( (b_n)_{n \geq 1}\) of non-negative real numbers such that \( \sum_{n=1}^\infty b_n \leq 1 \) and for any \( (a_j)_{j \in \mathbb{Z}} \in \{ 0,1 \}^{\mathbb{Z}}\)
    \[
    P\bigl(\min \{ j \geq 1 \colon X_{k+j} = 0 \} = n\mid X_k = 0 , (X_{k-i})_{i=1}^\infty = (a_{k-i})_{i=1}^\infty  \bigr) = b_n,\qquad \text{for all }  k,n \in \mathbb{Z}.
    \]
    We will in addition always assume that \( \sum_{n=1}^\infty b_n = 1\) and that the process is stationary.  
\end{definition}

\begin{lemma}\label{lemma: connected subsets}
    Assume that \( X = X^\nu \in \mathcal{R}\) for some translation invariant measure \( \nu.\) Then \( X \) is a renewal process if and only if \( \nu \) is supported  on finite intervals of \( \mathbb{Z}. \) 
\end{lemma}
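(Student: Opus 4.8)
The plan is to prove both implications by exploiting the exponential formula $P(X^\nu(K)\equiv 0) = e^{-\nu(\mathcal{S}_K^\cup)}$ together with the memorylessness characterization of renewal processes.

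For the \emph{if} direction, suppose $\nu$ is supported on finite intervals of $\mathbb{Z}$. The key observation is that conditioning on $X_k = 0$ is the same (by Theorem~\ref{thm:DFK}, or rather the construction in its proof) as restricting the Poisson process to sets avoiding $k$, i.e.\ replacing $\nu$ by $\nu|_{(\mathcal{S}_k)^c}$. Since $\nu$ is supported on intervals, any interval either avoids $k$ or contains $k$; after conditioning on $X_k = 0$, the surviving sets are exactly the intervals lying entirely in $\{\dots,k-1\}$ or entirely in $\{k+1,\dots\}$. Crucially, no surviving set meets both $\{<k\}$ and $\{>k\}$, so the restricted process on $\{j<k\}$ and on $\{j>k\}$ are governed by mutually singular pieces of $\nu$ and hence are \emph{independent} given $X_k=0$. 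This immediately yields the Markov/renewal property: conditioning further on the past $(X_{k-i})_{i\geq 1}$ does not affect the law of $(X_{k+j})_{j\geq 1}$ given $X_k=0$. To get the precise form of a renewal process (with respect to $0$), I would set $b_n := P(\min\{j\geq 1: X_{k+j}=0\}=n \mid X_k=0)$, which by stationarity and the independence just established does not depend on $k$ or on the past; translation invariance gives $\sum_n b_n \le 1$ (and we have assumed $=1$ throughout, which corresponds to $X\not\equiv 1$).

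For the \emph{only if} direction, suppose $X = X^\nu$ is a renewal process but $\nu$ charges some non-interval set $A$; pick such an $A$ with, say, $a = \min A$, $b = \max A$, and some $c$ with $a < c < b$, $c \notin A$ — more precisely, choose $A$ so that there are indices $a < c < b$ with $a,b \in A$, $c \notin A$. The idea is to contradict the renewal (memorylessness) property by detecting the "long-range" connection between sites $a$ and $b$ that bypasses $c$. Concretely, I would compare $P(X_b = 0 \mid X_c = 0, X_a = 0)$ with $P(X_b = 0 \mid X_c = 0, X_a = 1)$: for a renewal process with respect to $0$, conditioning on $X_c = 0$ makes the future $(X_j)_{j>c}$ independent of the past $(X_j)_{j<c}$, so these two conditional probabilities must be equal. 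On the other hand, using Proposition~\ref{prop: correlation} (its general form, with disjoint sets) and the exponential formula, one computes these probabilities in terms of $\nu$ of various unions of $\mathcal{S}$-sets, and the discrepancy is governed precisely by $\nu$ of sets meeting $\{a\}$ and $\{b\}$ but not $\{c\}$ — a quantity that is strictly positive because $\nu(\{A\}) > 0$ and $A$ is such a set. This contradiction forces $\nu$ to be supported on intervals.

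The main obstacle I anticipate is the \emph{only if} direction: translating "$\nu$ charges a non-interval" into a concrete finite-dimensional inequality that violates memorylessness. The cleanest route is probably to fix three sites $a < c < b$ and work entirely with the four-dimensional marginal on $\{a, c, b\}$ (plus possibly one more coordinate), express all relevant "all-zero" probabilities via $e^{-\nu(\cdot)}$, and show that the renewal identity $P(X_a = 0, X_b = 0 \mid X_c = 0) = P(X_a = 0\mid X_c=0)P(X_b=0\mid X_c = 0)$ is equivalent to $\nu\bigl((\mathcal{S}_a \cap \mathcal{S}_b) \setminus \mathcal{S}_c\bigr) = 0$; requiring this for all such triples $a<c<b$ is then exactly the statement that $\nu$ gives zero mass to every non-interval set. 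One has to be slightly careful that the relevant conditional probabilities are well-defined (positivity of $P(X(I)\equiv 0)$, which follows from positive association and non-triviality as in the discussion preceding Theorem~\ref{theorem: Markov}), but this is routine.
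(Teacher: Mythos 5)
Your \emph{only if} argument is, at bottom, the same as the paper's. The paper compares $P(X_{n+1}=0\mid X_n=0)$ with $P(X_{n+1}=0\mid X_n=0,X_{n-1}=0,\dots)$, writes both as $e^{-\nu(\cdot)}$, and reads off $\nu(\mathcal{S}_{n+1}\cap\mathcal{S}_n^c\cap\mathcal{S}_j)=0$ for all $j\le n$; your three-point identity $P(X_a=0,X_b=0\mid X_c=0)=P(X_a=0\mid X_c=0)\,P(X_b=0\mid X_c=0)$ for $a<c<b$ reduces, via $\nu(U)+\nu(V)=\nu(U\cup V)+\nu(U\cap V)$ applied to $U=\mathcal{S}_a\cup\mathcal{S}_c$ and $V=\mathcal{S}_b\cup\mathcal{S}_c$, to exactly $\nu\bigl((\mathcal{S}_a\cap\mathcal{S}_b)\smallsetminus\mathcal{S}_c\bigr)=0$, which is the same conclusion. (The inclusion--exclusion step needs $\nu(\mathcal{S}_c)<\infty$, which you correctly trace back to $P(X_c=0)>0$; you also need the small observation that the renewal property, which conditions on $X_c=0$ \emph{and} the whole past, iterates to give conditional independence of $(X_j)_{j>c}$ from $(X_j)_{j<c}$ given $X_c=0$ --- routine, but worth a line.) Your sketch of the \emph{if} direction, via the fact that conditioning on $X_k=0$ restricts $\nu$ to intervals lying entirely on one side of $k$ and hence decouples past and future, is the intended argument; the paper leaves that direction to the reader.

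There is one concrete gap: the lemma asserts that $\nu$ is supported on \emph{finite} intervals, and your argument only excludes non-intervals. Infinite intervals --- sets of the form $[k,\infty)\cap\mathbb{Z}$, $(-\infty,k]\cap\mathbb{Z}$, or $\mathbb{Z}$ itself --- have no gaps, so they pass your three-point test for every triple $a<c<b$ and are not ruled out by anything you wrote. The paper closes this with a separate step: if $\nu$ gave positive mass to such sets, then with positive probability $X$ would equal $1$ on an entire half-line, which is incompatible with the standing assumption $\sum_{n\ge 1}b_n=1$ (recurrence) in the definition of a renewal process. You need to add this final step to reach the stated conclusion.
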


\begin{proof}
    We show "only if" direction; the "if" direction is left to the reader.

    Assume that \( X = X^\nu\) is a renewal process. Then, by Theorem~\ref{theorem:DFK}, \( X\) is positively associated, and hence
     \begin{equation}\label{eq: new assumption}
        P(X_{n+1}=0 \mid  X_n = 0)>0\quad \forall  n \in \mathbb{Z}.
    \end{equation}   
    Let \( n \in \mathbb{Z}. \) Then, by definition, we have
    \begin{align*}
         e^{-\nu(\mathcal{S}_{n+1}\smallsetminus 
 \mathcal{S}_n)} = P(X_{n+1}=0 \mid  X_n = 0)> 0.
    \end{align*}
    Since \( X\) is a renewal process, we also have
    \begin{align*}
        P(X_{n+1}=0 \mid  X_n = 0)
        =
        P(X_{n+1}=0 \mid  X_n = 0,\, X_{n-1} = 0,\, \dots)
        =
        e^{-\nu(\mathcal{S}_{n+1}\smallsetminus \bigcup_{j \leq n} \mathcal{S}_j)}.
    \end{align*}
    Combining the two above equations,  we obtain
    \[
        \nu(\mathcal{S}_{n+1}\smallsetminus \mathcal{S}_n) 
        = \nu(\mathcal{S}_{n+1}\smallsetminus \bigcup_{j \leq n} \mathcal{S}_j) < \infty.
    \]
    In particular, this implies that for any \( j \leq n,\) we have 
    \[
    \nu(\mathcal{S}_{n+1}\cap  
 \mathcal{S}_n^c \cap \mathcal{S}_j)
 =
 0.
    \]
    This implies that \( \nu \) is supported on intervals. 
    Assume now for contradiction that \( \nu \) has support on infinite intervals. Then, without loss of generality, we can assume that 
    \[
    \nu \Bigl( \bigl\{ [k,\infty) \cap \mathbb{Z} \colon k \in \mathbb{Z} \bigr\} \Bigr) > 0.
    \]
    Since \( X = X^\nu, \) with strictly positive probability there is \( k \in \mathbb{Z}\) such that \( X_j = 1\) for all \( j \geq k,\) and hence \( X\) cannot be recurrent. 
    This concludes the proof.
\end{proof}

\begin{proof}[Proof of Theorem~\ref{theorem: Markov}]
    
    Assume first that \( X=X^\nu \in \mathcal{R}\)  for some measure \( \nu. \)\color{black}{} 
    Since \(  X\) is stationary, \( \nu\) is translation invariant, and by Lemma~\ref{lemma: connected subsets}, \( \nu\) is supported on finite intervals.
    Letting \( k \geq 1,\) it follows that
    \begin{align*}
        &c_k = P(X_0 = X_k = 0) = e^{-\nu(\mathcal{S}_{\{0,k\}}^\cup)}
        =
        e^{-\nu(\mathcal{S}_0)}e^{-\nu(\mathcal{S}_k)} e^{\nu(\mathcal{S}_{\{0,k\}}^\cap)}
        =
        e^{-\nu(\mathcal{S}_0)}e^{-\nu(\mathcal{S}_0)} e^{\nu(\mathcal{S}_{\{0,k\}}^\cap)}
        \\&\qquad=
        c_0^2 e^{\nu(\mathcal{S}_{\{0,k\}}^\cap)}
        =
        c_0^2 \prod_{j>k} e^{(j-k) \nu([j])}.
    \end{align*}
    From this, it follows that
    \begin{align*}
        &\frac{c_{k-1}c_{k+1}}{c_k^2} = 
        \frac{c_0^2 \prod_{j>k-1} e^{(j-(k-1)) \nu([j])}c_0^2 \prod_{j>k+1} e^{(j-(k+1)) \nu([j])}}{c_0^2 \prod_{j>k} e^{(j-k) \nu([j])}c_0^2 \prod_{j>k} e^{(j-k) \nu([j])}}
        =
        e^{\nu([k])}  .
    \end{align*}
    Consequently, \( \nu \) is given by~\eqref{eq: ren nu def Markov}. 

    Using~\eqref{eq: ck for markov}, it is easy to check that \( X\) being positively associated implies that the following inequality holds.
    \begin{equation}\label{eq: convexity 1 Markov}
        c_{k-1}c_{k+1} \geq c_k^2 \quad \forall k \geq 1.
    \end{equation}  

    Let \( \nu\) be given by~\eqref{eq: ren nu def Markov}.  
    Since \( \nu \) is translation invariant and supported only on finite intervals, it follows from Lemma~\ref{lemma: connected subsets} that \( X^\nu\) is a renewal process.
    Next, note that for any \( k \geq 0,\) we have, for all \( m \geq k+1,\)
    \begin{equation}\label{eq: part of limit}
        \begin{split}
        & \sum_{j=k+1}^m (j-k) \log\frac{c_{j+1}c_{j-1}}{c_j^2} 
        =  \log c_k - (m+1-k) \log c_m  + (m-k)\log c_{m+1}
        \\&\qquad=  \log c_k - \log c_m  + (m-k)\log \frac{c_{m+1}}{c_m}.
        \end{split}
    \end{equation}
    Using~\eqref{eq: ck for markov}, one can verify that
    \begin{equation}\label{eq: thing to be replaced if not Markov}
        \lim_{m \to \infty} c_m = c_0^2 \quad \text{and} \quad \lim_{m \to \infty} m \log \frac{c_{m+1}}{c_m} = 0.
    \end{equation}   
    Letting \( m \to \infty \) in~\eqref{eq: part of limit}, we obtain
    \begin{align*}
        &P(X^\nu_0 = X^\nu_k=0) 
        =
        c_0^2 e^{\sum_{j>k} (j-k) \nu([j])}
        =
        c_0^2 e^{\log c_k - \log c_0^2} = c_k,
    \end{align*}
    where the first equality follows as in the first display of the proof.
    Since \( X^\nu\) and \( X\) are both renewal processes, this shows that \( X^\nu = X.\)
\end{proof}

We now state and prove a more general version of Theorem~\ref{theorem: Markov} which is valid for all renewal processes.

\begin{theorem}\label{theorem: general Markov}
    Let \( X\) be a renewal process with \( P (X_0 = 0) >0. \) Then \( X \in \mathcal{R} \) if and only if
    \begin{equation}\label{eq: convexity 1}
        c_{k-1}c_{k+1} \geq c_k^2 \quad \forall k \geq 1.
    \end{equation}
    Moreover, in this case, \( X = X^\nu,\) where \( \nu \) is the translation invariant measure given by~\eqref{eq: ren nu def Markov}.
\end{theorem}

The proof of Theorem~\ref{theorem: Markov} already proves Theorem~\ref{theorem: general Markov} if we can show that~\eqref{eq: thing to be replaced if not Markov} also holds in this more general setting. This is the purpose of the following lemma.

\begin{lemma}\label{lemma: renewal properties}
    Assume \( X\) is a  renewal process with \( c_0 > 0 \) such that~\eqref{eq: convexity 1} holds.
    Then
    \begin{enumerate}[label=(\alph*)]
        \item \( (c_k)_{k\geq 0}\) is decreasing,\label{item: decreasing}
        \item \( \lim_{k \to \infty} c_k = c_0^2,\) and  \label{item: mixing}
        \item \( \lim_{k \to \infty} k \log  (c_{k+1}/ c_k) =0.\)\label{item: finer convergence}
    \end{enumerate}
\end{lemma}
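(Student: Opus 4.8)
The three statements are about the renewal sequence $(c_k)$ under the convexity hypothesis~\eqref{eq: convexity 1}. I would set $d_k \coloneqq c_k/c_{k+1}$, so that~\eqref{eq: convexity 1} says exactly $d_{k-1} \le d_k$, i.e.\ $(d_k)_{k\ge 1}$ is non-decreasing; equivalently $\log c_k - \log c_{k+1}$ is non-decreasing in $k$. The first task is part~\ref{item: decreasing}: since $X$ is a renewal process, the probability that $X$ vanishes at $0$ and at $k$ can only decrease as $k$ grows past any given point (formally: $c_{k+1} = P(X_0=0) \cdot P(X_{k+1}=0 \mid X_0 = 0)$ and one conditions further on the location of the first return, or one simply notes $\{X_0 = X_{k+1}=0\}$ probabilities telescope through renewal epochs), so $(c_k)$ is decreasing and bounded below by $c_0^2>0$ (positive association, which holds by Theorem~\ref{thm:DFK} since $X \in \mathcal{R}$ — or rather, here we only assume~\eqref{eq: convexity 1}, so I should instead derive $c_k \ge c_0^2$ directly from~\eqref{eq: convexity 1} together with the fact that $d_k$ is non-decreasing and $c_k \to $ some limit). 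Actually the cleanest route: $(d_k)$ non-decreasing and $c_k = c_0 / (d_0 d_1 \cdots d_{k-1})$, so the behavior of $(c_k)$ is controlled by whether $\lim_k d_k$ is $1$ or $>1$.

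Next, part~\ref{item: mixing}. Since $(c_k)$ is decreasing and $\ge 0$, it has a limit $c_\infty \ge 0$. I claim $c_\infty = c_0^2$. The renewal structure gives a mixing-type statement: as $k \to \infty$, the events $\{X_0 = 0\}$ and $\{X_k = 0\}$ become asymptotically independent provided the renewal process is ``spread out'' enough — but in general a renewal process need not be mixing (it could be periodic). The point where~\eqref{eq: convexity 1} is doing real work is precisely here: convexity of $k \mapsto -\log c_k$ forces $d_k \downarrow$-limit... no, $d_k \uparrow$ to some $d_\infty \in [1,\infty]$. If $d_\infty > 1$ then $c_k \to 0$ geometrically, but then $P(X_0 = X_k = 0) \to 0$ while $P(X_0=0) = c_0 > 0$ fixed — one must rule this out using that $X$ is a stationary renewal process with $\sum b_n = 1$ (so the return time is a.s.\ finite and the stationary version has $P(X_0 = 0)$ equal to the reciprocal of the mean return time, which is positive and finite). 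Concretely, $c_k \ge c_0 \cdot P(\text{renewal at } 0) \cdot (\text{prob.\ of a renewal exactly at } k)$, and by the renewal theorem the latter converges to $1/\mu$ where $\mu$ is the mean gap; combined with $c_0 = 1/\mu$ as well, this pins $c_\infty \ge c_0^2$, and decreasing-ness gives $c_\infty \le \cdots$; to get the matching upper bound $c_\infty \le c_0^2$ I would use subadditivity/submultiplicativity: a renewal at $0$ and at $k+\ell$ requires (essentially) a renewal at $0$ and then one $\ell$ steps later from a renewal near $k$, giving $c_{k+\ell} \le$ roughly $c_k c_\ell / c_0$, hence $c_\infty \le c_\infty^2/c_0$, i.e.\ $c_\infty \ge c_0$ or $c_\infty = 0$ — that's not quite it either. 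The honest version: work with $\tilde c_k \coloneqq c_k/c_0 = P(X_k = 0 \mid X_0 = 0)$ and use the renewal-theoretic fact $\tilde c_k \to c_0$ (the stationary vacancy density), which holds whenever the gap distribution is aperiodic; aperiodicity is where~\eqref{eq: convexity 1} enters — if the gaps were supported on a sublattice $d\mathbb{Z}$, then $c_k$ would oscillate and $-\log c_k$ would fail to be convex. So the key lemma to isolate is: \emph{if $(c_k)$ satisfies~\eqref{eq: convexity 1} then the underlying renewal process is aperiodic}, after which $\lim c_k = c_0^2$ is the classical renewal theorem.

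For part~\ref{item: finer convergence}, I would argue as follows. Write $e_k \coloneqq \log(c_k/c_{k+1}) \ge 0$; by~\eqref{eq: convexity 1}, $(e_k)$ is non-increasing, hence converges to some $e_\infty \ge 0$. By part~\ref{item: mixing}, $\sum_{k=0}^\infty e_k = \log(c_0 / c_\infty) = \log(c_0/c_0^2) = -\log c_0 < \infty$, so in particular $e_\infty = 0$ and the tail sums $\sum_{j \ge k} e_j \to 0$. Now $k \log(c_{k+1}/c_k) = -k e_k$, and since $(e_j)$ is non-increasing and non-negative, $k e_k \le 2 \sum_{j = \lceil k/2 \rceil}^{k} e_j \le 2\sum_{j \ge \lceil k/2\rceil} e_j \to 0$. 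That gives~\ref{item: finer convergence} cleanly from~\ref{item: mixing} plus monotonicity of $(e_k)$, with no further input.

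\textbf{Main obstacle.} The delicate point is part~\ref{item: mixing}, specifically proving $\lim_k c_k = c_0^2$ rather than some other limit. This requires either invoking the renewal theorem (and thus establishing aperiodicity of the gap law as a consequence of the convexity condition~\eqref{eq: convexity 1} — a periodic renewal process violates convexity, which needs a short argument), or a self-contained submultiplicativity argument showing $c_{k+\ell} \ge c_k c_\ell / c_0$ and $c_{k+\ell} \le c_k c_\ell / c_0$ up to controllable errors, forcing the limit. Parts~\ref{item: decreasing} and~\ref{item: finer convergence} are then comparatively routine: the former from the renewal/telescoping structure, the latter a purely elementary consequence of the monotone summable sequence $(e_k)$ as above. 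I would structure the write-up as: (i) $(c_k)$ decreasing; (ii) aperiodicity from~\eqref{eq: convexity 1}, then renewal theorem gives $c_k \to c_0^2$; (iii) the $k e_k \to 0$ tail-sum estimate.
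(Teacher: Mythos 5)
Your part~(c) is correct and in fact cleaner than the paper's argument: once one knows $(c_k)$ is decreasing and $c_k\to c_0^2$, the tail-sum estimate $k e_k\le 2\sum_{j\ge\lceil k/2\rceil}e_j$ for the non-increasing, non-negative, summable sequence $e_k=\log(c_k/c_{k+1})$ immediately gives $k\log(c_{k+1}/c_k)\to 0$; the paper instead works with the partial sums $S_m=\sum_{k\le m}k\log(c_{k-1}c_{k+1}/c_k^2)$ and a $\liminf$ argument using $\sum 1/m=\infty$. However, parts~(a) and~(b) have genuine gaps. For~(a), your primary justification --- that $c_k$ decreases ``since $X$ is a renewal process'' --- is false: a periodic renewal process (gaps supported on $2\mathbb{Z}$, say) has $c_1=0<c_2$, so monotonicity really does require~\eqref{eq: convexity 1}. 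Your fallback is built on the claim that $d_k=c_k/c_{k+1}$ is non-decreasing, but the inequality $c_{k-1}c_{k+1}\ge c_k^2$ says $c_{k-1}/c_k\ge c_k/c_{k+1}$, i.e.\ $(d_k)$ is \emph{non-increasing} (equivalently, $\log c_k$ is convex, not $-\log c_k$; you use the correct direction later when you assert that $(e_k)$ is non-increasing in part~(c)). The correct argument, which is the paper's, needs the right direction: $(d_k)$ decreases to some limit $a$; if $a<1$ then $c_k$ grows geometrically, contradicting $c_k<1$, so $a\ge 1$, and since the sequence is non-increasing every $d_k\ge a\ge 1$. With $(d_k)$ non-decreasing this inference is unavailable.

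For~(b) you correctly identify the crux but do not close it: the aperiodicity-from-convexity lemma is only asserted, and the route through the Erd\H{o}s--Feller--Pollard renewal theorem also needs $c_0=1/\mu$ with $\mu<\infty$. The paper avoids all of this with a short elementary argument you might look for: let $a_j=P(\min\{i\ge 0:X_i=0\}=j)$; the renewal property and stationarity give $c_0=P(X_k=0)=\sum_{j=0}^k a_j\,c_{k-j}/c_0$ for every $k$, and since $\sum_j a_j=1$, $c_{k-j}\searrow c_\infty$, and the terms are dominated by $a_j/c_0$, letting $k\to\infty$ yields $c_0=c_\infty/c_0$, i.e.\ $c_\infty=c_0^2$. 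This requires no renewal theorem and no aperiodicity discussion. In summary: (c) is a genuinely different and slightly slicker argument; (a) contains a sign error that breaks the intended proof; and (b) is an incomplete sketch of a heavier route than necessary.
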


\begin{proof}\mbox{}
    \begin{enumerate}[label=(\alph*)]
        \item Since~\eqref{eq: convexity 1} holds and \( c_0>0,\) we have \( c_k \in (0,1)\) for all \( k \geq 0.\)
     
    For any \( k \geq 1,\) we have
    \begin{equation*}
        \frac{c_{k+1}c_{k-1}}{c_k^2}\geq 1 \Leftrightarrow
        \frac{c_{k-1}}{ c_k}\geq \frac{c_k}{c_{k+1}} .
    \end{equation*}
    Consequently, the sequence \( (\frac{c_{k-1}}{ c_k})_{k\geq 1}\) is decreasing and converges to a limit \( a \in [0,c_0/c_1]\) as \( k \to \infty. \) 
    We will now show that \( a \geq 1.\) To this end, assume for contradiction that \( a<1.\) Then there is \( j \geq 1\) such that \( c_k/c_{k+1} < (1+a)/2<1\) for all \( k \geq j,\) and hence \( c_{k}> c_j \bigl(2/(1+a)\bigr)^{k-j} \) for all \( k \geq j. \) Since \( c_j >0,\) this implies that \( \lim_{k \to \infty} c_k = \infty,\) contradicting that \( c_k < 1\) for all \( k \geq 0.\) Hence we must have \( a \geq 1.\) Since \( (c_{k-1}/c_{k})_{k \geq 1}\) is decreasing, it follows that \( c_{k-1}/c_{k}\geq a=1\) for all \( k \geq 1, \) and hence \((c_k)_{k\geq 0} \) is decreasing. This completes the proof of~\ref{item: decreasing}.

    \item Since, by~\ref{item: decreasing}, \( (c_k)_{k\geq 0}\) is decreasing, let \( c_\infty\) denote its limit.
    Let 
    \begin{equation*}
        a_k \coloneqq P\bigl(\min\{ j \geq 0 \colon X_j=0 \} = k \bigr),\qquad k\geq 0.
    \end{equation*}
    Then, for any \( k \geq 1,\) we have (due to stationarity)
    \begin{equation*}
        c_0 = P(X_k = 0) = \sum_{j=0}^k a_j (c_{k-j}/c_0).
    \end{equation*}
    Since \( \sum_{j=0}^\infty a_j = 1,\) \( c_0>0,\) and   \( c_k \searrow c_\infty,\) we obtain~\ref{item: mixing}.

    \item 
    To this end, note that for any \( m \geq 1,\) we have
    \begin{align*}
        &S_m \coloneqq \sum_{k=1}^m k \log \frac{c_{k-1}c_{k+1}}{c_{k}^2}
        =
        \log c_0 + m\log c_{m+1} - (m+1)\log c_m
        \\&\qquad=
        \log c_0 - \log c_m + \log (c_{m+1}/ c_m)^m \overset{\rm\ref{item: mixing}}{\leq} \log c_0 - \log c_m + 0 
        \\&\qquad\nearrow \log c_0 - \log c_0^2 = -\log c_0.
    \end{align*} 
    Since each term in the sum \( S_m \) is non-negative, \( S_m\) is increasing in \( m.\) Since \( (S_m)_{m\geq 1} \) is increasing and bounded from above by \( -\log c_0,\) its limit \( \lim_{m \to \infty} S_m\) exists and is bounded from above by \( - \log c_0. \)
    Since, by~\ref{item: mixing}, the limit of \( (c_m)_{m\geq 0}\) also exists, and 
    \begin{equation*}
        S_m -(\log c_0 - \log c_m)
        =
        \log (c_{m+1}/ c_m)^m,
    \end{equation*}
    it follows that  \( \lim_{m \to \infty}  (c_{m+1}/ c_m)^m\) exists.  
    Next, note that since \( c_0 \geq c_1 \geq  \dots \geq c_0^2\) and \( \sum_{m=1}^\infty 1/m = \infty,\) we must have \( \liminf_{m \to \infty} m(c_m-c_{m+1}) = 0.\) Since  
    \begin{align*}
        &\liminf_{m \to \infty} m(c_m-c_{m+1}) = 0
        \Rightarrow  \liminf_{m \to \infty} m(1-c_{m+1}/c_m) = 0 
        \\&\qquad\Rightarrow \liminf_{m \to \infty} (1-(1-c_{m+1}/c_m))^m = 1
         \Leftrightarrow  
        \liminf_{m \to \infty} (c_{m+1}/c_m)^m = 1
    \end{align*}
    and \(  \lim_{m \to \infty}  (c_{m+1}/ c_m)^m\) exists, it follows that 
    \[ 
        \lim_{m \to \infty}  (c_{m+1}/ c_m)^m = \liminf_{m \to \infty}  (c_{m+1}/c_m)^m = 1.
    \]
    This establishes~\ref{item: finer convergence}, and thus completes the proof.
    \end{enumerate}
\end{proof}

\begin{proof}[Proof of Theorem~\ref{theorem: general Markov}]

    Replacing~\eqref{eq: thing to be replaced if not Markov} with Lemma~\ref{lemma: renewal properties}, the proof of Theorem~\ref{theorem: Markov} gives the desired conclusion.
\end{proof}

Lemma~\ref{lemma: connected subsets} tells us that for positively associated Markov chains, the corresponding \( \nu\) is supported on finite intervals. Interestingly, a similar result holds for Markov random fields.

\begin{proposition} \label{prop: MP and connected}
    Let \( X = X^\nu\) be a \(\{0,1\}\)-valued process on a connected graph that satisfies the Markov property and is such that for all finite sets \( A,\)
    \[
        P\bigl(X(A)\equiv 0 \mid  X(\partial A) \equiv  0 \bigr)  > 0.
    \]
    Let \( \mathcal{D}\) be the set of all disconnected subsets of \( S,\) at least one of whose components is finite. Then \( \nu(\mathcal{D})=0.\)  
\end{proposition}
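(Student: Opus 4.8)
The plan is to follow the proof of Lemma~\ref{lemma: connected subsets}, with its one-dimensional renewal step replaced by the Markov property of the field; throughout, $S$ is countable and (as is standard for the Markov property) the graph is locally finite, so all the boundaries that appear are finite. First I record a consequence of the hypothesis: applied with $A=K$ it gives $P\bigl(X(K\cup\partial K)\equiv 0\bigr)>0$, and since $\{X(K\cup\partial K)\equiv 0\}\subseteq\{X(K)\equiv 0\}$ we get $P\bigl(X(K)\equiv 0\bigr)>0$, that is $\nu(\mathcal{S}_K^\cup)<\infty$, for every finite $K\subseteq S$. This finiteness is what makes the subtractions of $\nu$-masses and the logarithms below legitimate.

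Next, fix a finite connected set $C\subseteq S$ and a vertex $w\in S\setminus(C\cup\partial C)$. By the Markov property, conditionally on $\sigma(X|_{\partial C})$ the restrictions $X|_C$ and $X|_{S\setminus(C\cup\partial C)}$ are independent; moreover $P\bigl(\,\cdot\mid\sigma(X|_{\partial C})\bigr)$ is a.s.\ constant on the event $\{X(\partial C)\equiv 0\}$, so this conditional independence survives conditioning on that event, and since $\{w\}\subseteq S\setminus(C\cup\partial C)$ we get
\[
    P\bigl(X(C)\equiv 0\mid X(\partial C)\equiv 0\bigr)=P\bigl(X(C)\equiv 0\mid X(\partial C)\equiv 0,\ X_w=0\bigr).
\]
Rewriting each side via $P(X(A)\equiv 0)=e^{-\nu(\mathcal{S}_A^\cup)}$ and the elementary identity $\mathcal{S}_{A\cup B}^\cup\setminus\mathcal{S}_B^\cup=\{T:T\cap A\neq\emptyset,\ T\cap B=\emptyset\}$ for disjoint $A,B$, and setting $\mathcal{A}_C\coloneqq\{T:T\cap C\neq\emptyset,\ T\cap\partial C=\emptyset\}$, the left-hand side becomes $e^{-\nu(\mathcal{A}_C)}$ and the right-hand side becomes $e^{-\nu(\mathcal{A}_C\setminus\mathcal{S}_w)}$. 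Since $\nu(\mathcal{A}_C)<\infty$, this forces $\nu(\mathcal{A}_C\cap\mathcal{S}_w)=0$, i.e.
\[
    \nu\bigl(\{T:T\cap C\neq\emptyset,\ T\cap\partial C=\emptyset,\ w\in T\}\bigr)=0\qquad\text{for every }w\in S\setminus(C\cup\partial C).
\]
Taking the countable union over such $w$ yields $\nu\bigl(\{T:T\cap C\neq\emptyset,\ T\cap\partial C=\emptyset,\ T\cap(C\cup\partial C)^c\neq\emptyset\}\bigr)=0$ for every finite connected $C$.

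It remains to see that these null sets cover $\mathcal{D}$. If $T\in\mathcal{D}$ and $C$ is a finite component of $T$, then $C\subseteq T$, so $T\cap C\neq\emptyset$; no vertex of $T\setminus C$ is adjacent to $C$ (otherwise it would lie in the component $C$), so $T\cap\partial C=\emptyset$; and $T\setminus C\neq\emptyset$ because $T$ is disconnected, hence $T\cap(C\cup\partial C)^c\supseteq T\setminus C\neq\emptyset$. Thus $T$ belongs to the null set associated with its component $C$, and as there are only countably many finite connected $C\subseteq S$, countable subadditivity gives $\nu(\mathcal{D})=0$. The only genuinely delicate point is the Markov-property step --- passing from conditional independence given the $\sigma$-algebra $\sigma(X|_{\partial C})$ to the corresponding statement for conditioning on the single event $\{X(\partial C)\equiv 0\}$ --- together with the bookkeeping that keeps every subtracted $\nu$-mass finite; once those are settled, the combinatorial identification of $\mathcal{D}$ is routine.
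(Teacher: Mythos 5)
Your proof is correct and follows essentially the same route as the paper: use the Markov property to show that $P\bigl(X(C)\equiv 0\mid X(\partial C)\equiv 0\bigr)$ is unchanged by conditioning on additional zeros outside $C\cup\partial C$, translate both sides into $\nu$-masses, and deduce that $\nu$ gives no mass to sets meeting $C$ and its complement while missing $\partial C$. The only difference is that you add one vertex $w$ at a time and take a countable union, whereas the paper conditions on all of $X(A^c)\equiv 0$ at once; your variant has the minor virtue of only ever conditioning on finitely many zeros, and you also spell out the finiteness of $\nu(\mathcal{S}_K^\cup)$ and the final covering of $\mathcal{D}$, which the paper leaves implicit.
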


\begin{proof}
    By assumption, we have for any finite set $A$
    \begin{align*}
        P\bigl(X(A)\equiv 0 \mid  X(\partial A) \equiv  0 \bigr) =  e^{-\nu( \mathcal{S}_{A}^\cup \smallsetminus 
  \mathcal{S}_{\partial A}^\cup )} > 0.
    \end{align*}
    Since \( X\) satisfies the Markov property, we also have
    \begin{align*}
        P\bigl(X(A)\equiv 0 \mid  X(\partial A) \equiv  0 \bigr)
        =
        P\bigl(X(A)\equiv 0 \mid  X( A^c) \equiv  0 \bigr)
        = 
        e^{-\nu( \mathcal{S}_A^\cup \smallsetminus \mathcal{S}_{A^c}^\cup)} > 0.
    \end{align*}
    Combining the two above equations, we obtain
    \[
        \nu( \mathcal{S}_A^\cup \smallsetminus 
         \mathcal{S}_{\partial A}^\cup)
        =
        \nu(\mathcal{S}_A^\cup \smallsetminus \mathcal{S}_{A^c}^\cup) <\infty
    \]
    which easily yields 
    \begin{equation*}
        \nu\bigl( (\mathcal{S}_A^\cup \cap \mathcal{S}_{A^c}^\cup) \smallsetminus \mathcal{S}_{\partial A}^\cup \bigr)  =0.
    \end{equation*}
    This concludes the proof. 
\end{proof}

\begin{remark}
    With Theorem~\ref{theorem: Markov} in mind, it is natural to ask whether 
    \begin{enumerate}
        \item all positively associated tree-indexed Markov chains are in \( \mathcal{R}\), and if
        \item the Ising model on \( \mathbb{Z}^d,\) \( d \geq 2,\) is in \( \mathcal{R}.\)
    \end{enumerate}
    We answer both these questions negatively in Theorem~\ref{theorem: MC on trees} and Theorem~\ref{theorem: Ising}.
\end{remark}

\section{Infinite permutation invariant processes}\label{sec: inf perm}

In this section, we consider (possibly infinite)
measures $\nu$ on 
\( \{ 0,1 \}^\mathbb{N}\) which are invariant under finite permutations 
and the associated processes $X^\nu$ which are
permutation invariant probability measures on
\( \{ 0,1 \}^\mathbb{N}\). Also, in
this section, we let \( \Pi_p\) denote the product measure on \( \{ 0,1 \}^S\) with density \( p\)
and we recall de Finetti's Theorem which states that each
$\{0,1\}$-valued process $X$ indexed by $\mathbb{N}$
which is invariant under finite permutations is an average of product measures; i.e.\ there
is a (unique) probability measure $\mu$ on $[0,1]$ such that
the distribution of $X$ is
$$
\int_0^1 \Pi_p \, d\mu(p).
$$
We will identify $X$ with $\mu$.

\begin{example}\label{example: p}
    If \( X \sim \Pi_p,\) i.e. if \( \mu = \delta_p,\)  then \( X = X^\nu\) for the (infinite) measure \( \nu\) defined by
    \begin{equation*}
        \nu(A) = \begin{cases}
            -\log(1-p) &\text{if } A = \{ j \} \text{ for some } j \in \mathbb{N} \cr
            0 &\text{else.}
        \end{cases}
    \end{equation*}
\end{example}

\begin{example}\label{example: prod p1}
    For all \( p \in [0,1], \) if \(  X  \sim \alpha \Pi_p + (1-\alpha)\Pi_1 ,\) i.e. if \( \mu = \alpha\delta_p + (1-\alpha)\delta_1,\)  then  \( X =  X^\nu \) for the (infinite) measure \( \nu\) defined by
    \begin{equation*}
        \nu(A) = 
        \begin{cases}
            -\log (1-p) &\text{if } A=\{ j \} \text{ for some } j \in \mathbb{N} \cr
            -\log \alpha &\text{if } A = \mathbb{N}, \cr
            0 &\text{else.}
        \end{cases}
    \end{equation*}
\end{example}

\begin{example}
    Let \( \nu = \Pi_p. \) Then
    \begin{equation*}
        X^\nu \overset{d}{=} \sum_{j=0}^\infty \frac{e^{-1}}{j!} \Pi_{1-(1-p)^j},
    \end{equation*}
    which is a convex combination of product measures whose densities
    have an accumulation point at one.
\end{example}

\subsection{A de Finetti theorem for infinite measures}
To begin here, we need to first understand the permutation 
invariant (possibly infinite) measures $\nu$
on $\{0,1\}^\mathbb{N}$. If we stick to probability 
measures, then this is the classical
de Finetti's Theorem above.
While this immediately extends to any finite measure,
we need to encompass infinite measures as well here. 
We \color{blue}proved\color{black}{} the following infinite version of de Finetti's Theorem which is relevant for our specific context. After having done this, we learned that this was already done (in a slightly different and even more general setup) by Harry Crane; see~\cite[Theorem 2.7]{crane}. Despite the result therefore not being new, we include our proof below since it is quite short and written using the same language as the rest of the paper.

\begin{theorem}[A version of de Finetti's theorem for possibly infinite measures]\label{theorem: definetti}
    Let \( \nu\) be a permutation invariant measure on \( \mathcal{P}(\mathbb{N})\smallsetminus \{ \emptyset \}.\)
    Assume that \( \nu(\mathcal{S}_{1}) <\infty.\) Then there is a unique measure \( \sigma\) on \( (0,1] \) and \( c \geq 0 \) such that
    \begin{equation}\label{eq: definetti}
        \nu = c \sum_{i \in \mathbb{N}} \delta_i  + \int_0^1 {\textstyle \Pi_x} \,  d\sigma(x)
    \end{equation}
    and
    \begin{equation*}
        \int_0^1 x \, d\sigma(x)<\infty.
    \end{equation*}
\end{theorem}

Note that the assumption that \( \nu(\mathcal{S}_{1}) <\infty\) is needed to not have  \(X^\nu \equiv 1\) \color{black} almost surely. 

\begin{proof}
   Let \( \mathcal{U} \coloneqq \bigl\{ \{ 1 \}, \{ 2 \}, \dots \bigr\}, \) and for \( j \geq 1, \) let \(  \nu_j \coloneqq \nu |_{\mathcal{S}_{[j]}^\cup \smallsetminus (\mathcal{U} \cup \mathcal{S}_{[j-1]}^\cup)}.\) Note that by assumption, for each \( j \geq 1, \) \( \nu_j \) is a finite measure.
    Then we can write
    \begin{equation*}
        \nu = c \sum_{i \in \mathbb{N}} \delta_i + \sum_{j=1}^\infty \nu_j.
    \end{equation*}
    In particular, we have written \( \nu \) as a sum of measures with disjoint supports.

    Define 
    \[
        \tau A  \coloneqq \{ 1 \} \cup (A + 1),\quad A \subseteq \mathbb{N}.
    \]
    and
    \[
        {\hat { \nu}}_1(\cdot ) \coloneqq \nu_1\bigl(\tau \circ \cdot \bigr)
        =
        \nu\bigl(\tau \circ \cdot \bigr).
    \]
    Then \( \hat \nu_1\) is permutation invariant and finite, and hence, by de Finetti's theorem, there is a unique finite measure \( m\) on \( [0,1]\) such that 
    \[
        \hat { \nu}_1 \overset{d}{=} \int \Pi_{x} \, dm(x).
    \]
    Note that
    \[
        \| m \| = \| \hat \nu_1 \| = \nu\bigl(\{ A \colon \min A = 1,\, |A|>1\}\bigr) <\infty.
    \]

    For \( j \geq 1, \) let 
    \[
        \mathcal{A}_j \coloneqq \support  (\nu_j) = \{ A \subseteq \mathbb{N} \colon |A|>1,\, \min A = j \}.
    \]

    For \( \mathcal{S} \subseteq \mathcal{A}_j,\) we have
    \begin{align*}
        &\nu_j(\mathcal{S}   ) = \nu(\mathcal{S}) 
        = 
        \nu(\sigma_{1j} \mathcal{S}  ) 
        = 
        \nu_1(\sigma_{1j}  \mathcal{S})
        = 
        \hat \nu_1\bigl(\tau^{-1}(\sigma_{1j}  \mathcal{S})\bigr)
        \\&\qquad
        \\&\qquad =  
        \int \Pi_{x}\bigl( \tau^{-1}(\sigma_{1j}  \mathcal{S})\bigr) \, dm(x) 
        =  
        \int \Pi_{x}(  \mathcal{S} ) \, x^{-1} \, dm(x) .
    \end{align*}
    Letting \( d\sigma(x) = x^{-1}\, dm(x),\)  the desired conclusion immediately follows.
\end{proof}

\subsection{Characterization in terms of infinite divisibility}\label{section: characterization}

We recall that a random variable \( Z \) on \( [0,\infty)\) is infinitely divisible if its Laplace transform satisfies
\begin{equation}\label{eq: laplace}
    \mathbb{E}[e^{-t Z}] = e^{-\gamma t } e^{\int_0^\infty (e^{-st}-1)\, d\hat \sigma(s)},\quad t\geq 0
\end{equation}
where  \( \gamma \geq 0, \)  \( \smash{\hat \sigma \bigl((\varepsilon,\infty]\bigr) }\) is finite for all \( \varepsilon > 0,\) and \( \int_0^1 s \, d\hat \sigma(s) < \infty.\) Here \( \hat \sigma \) is called the Levy measure of \( Z.\) The notion of an infinitely divisible distribution on \( [0,\infty)\) easily extends to the case where the probability that \( Z = \infty\) is strictly positive. This corresponds to the law  \( p \mathcal{L}(Z') + (1-p) \delta_\infty, \) where \( Z' \) is infinitely divisible. 

Our main theorem completely identifies permutation invariant random vectors in \( \mathcal{R} \) with infinitely divisible distributions on \( [0,\infty].\)

\begin{theorem}\label{theorem: characterization}
\mbox{} 
\begin{enumerate}[label=(\alph*)]
    \item Assume that $ X \in  \mathcal{R}$ satisfies \( X = X^\nu \) where \( \nu\) is as in~\eqref{eq: definetti}. Further, let \( \mu \) be such that \( X \sim \int_0^1 \Pi_p \, d\mu(p).\)  Let \( \varphi \colon x \mapsto -\log(1-x) \) map \([0,1] \) to \( [0,\infty],\) and let \( Z = \varphi(Q) \) where \( Q \sim \mu. \) 
    Then 
    \begin{enumerate}[label=(\roman*)]
        \item $Z$ is infinitely divisible,
        \item $\hat \sigma = \sigma \circ \varphi\mathrlap{^{-1}},\phantom{{}^{1}} $ and  
        \item  \( \gamma = \nu(\{1\}) ,\)  
    \end{enumerate} 
    where \( \hat \sigma\) and \( \gamma\) are as in~\eqref{eq: laplace}.
    \label{item: characterization 1}

    \item Let $Z$ be an infinitely divisible distribution on $[0,\infty].$ Let \( \varphi^{-1} = \phi \colon x \mapsto 1-e^{-x}\) map \( [0,\infty]\) to \( [0,1],\)  and let \( \mu = \mathcal{L}(\phi(Z)).\) Let \( X \sim \int_0^1 \Pi_p \, d\mu(p).\) Then 
    \begin{enumerate}[label=(\roman*)]
        \item $X \in \mathcal{R},$ 
        \item $\sigma = \hat \sigma \circ \phi^{-1},$ and  
        \item \( \nu(\{ 1 \}) = \gamma,\)
    \end{enumerate} 
    where \( \hat \sigma\) and \( \gamma\) are as in~\eqref{eq: laplace} and \( X = X^\nu,\) where \( \nu \) is as in~\eqref{eq: definetti}.
    \label{item: characterization 2}
 \end{enumerate}
\end{theorem}

\begin{lemma}\label{lemma: Malins characterization}
    Let \( X \sim \int_0^1 \Pi_p \, d\mu(p). \) Assume that \( X \in \mathcal{R}\) and let \( \nu = c \sum_{i \in \mathbb{N}} \delta_{\{ i \}} + \int_0^1 \Pi_x \, d\sigma(x)\) be such that \( X = X^\nu.\) 
    Further, let \( Q \sim \mu, \)  \( y_0 \coloneqq 1-e^{-\nu(\{1\})},\) and  \( (Y_j)_{j \geq 1}\) be a Poisson point process with intensity \( \sigma.\) Then
    \begin{equation*}
        Q \overset{d}{=} 1-(1-y_0) \prod_{j\geq 1} (1-Y_j).
    \end{equation*} 
\end{lemma}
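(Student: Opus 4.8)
The plan is to combine the additivity of the Poisson construction (Lemma~\ref{lemma: 2.10new}) with the mapping/marking theorem for Poisson processes, and then appeal to the uniqueness of the de Finetti mixing measure. Split $\nu = \nu_1 + \nu_2$, where $\nu_1 \coloneqq \nu(\{1\})\sum_{i\in\mathbb{N}}\delta_{\{i\}}$ and $\nu_2 \coloneqq \int_0^1 \Pi_x\,d\sigma(x)$. By Lemma~\ref{lemma: 2.10new}, $X^\nu \overset{d}{=} \max\{X^{\nu_1},X^{\nu_2}\}$ with the two processes independent, and the measure $\nu_1$ is exactly the one appearing in Example~\ref{example: p} with $p$ determined by $-\log(1-p)=\nu(\{1\})$, so $X^{\nu_1}\sim\Pi_{y_0}$ with $y_0 = 1-e^{-\nu(\{1\})}$; in particular the coordinates of $X^{\nu_1}$ are i.i.d.\ Bernoulli$(y_0)$.

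Next I would give an explicit realization of the Poisson process underlying $X^{\nu_2}$. For $x\in(0,1]$, $\Pi_x$ is a probability measure on $\mathcal{P}(\mathbb{N})\smallsetminus\{\emptyset\}$ (it does not charge $\emptyset$), and since $\sigma$ is finite on each $(\varepsilon,1]$, the measure $\sigma(dx)\,\Pi_x(dA)$ on $(0,1]\times(\mathcal{P}(\mathbb{N})\smallsetminus\{\emptyset\})$ is $\sigma$-finite, and its image under $(x,A)\mapsto A$ is precisely $\nu_2$. Hence, by the mapping theorem followed by the standard description of a marked Poisson process, the point process underlying $X^{\nu_2}$ can be taken to be $\{A_j\}_j$, where $(Y_j)_j$ is a Poisson process on $(0,1]$ with intensity $\sigma$ and, conditionally on $(Y_j)_j$, the sets $A_j$ are independent with $A_j\sim\Pi_{Y_j}$; then $X^{\nu_2}_i = \ind\{i\in\bigcup_j A_j\}$. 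The point of this representation is that, conditionally on $(Y_j)_j$, the membership indicators $\ind\{i\in A_j\}$ are independent over all pairs $(i,j)$, so the events $\{i\in\bigcup_j A_j\}$, $i\in\mathbb{N}$, are conditionally independent, each of probability $1-\prod_{j\geq1}(1-Y_j)$. The infinite product converges in $[0,1]$ because $\mathbb{E}[\sum_j Y_j] = \int_0^1 x\,d\sigma(x)<\infty$ forces $\sum_j Y_j<\infty$ a.s. Thus, conditionally on $(Y_j)_j$, the coordinates of $X^{\nu_2}$ are i.i.d.\ Bernoulli$(1-\prod_{j\geq1}(1-Y_j))$.

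Now I would condition on $(Y_j)_j$ and take the coordinatewise maximum: since $X^{\nu_1}$ is independent of $(Y_j)_j$ and of $X^{\nu_2}$, conditionally on $(Y_j)_j$ the coordinates of $X^\nu = \max\{X^{\nu_1},X^{\nu_2}\}$ are i.i.d.\ Bernoulli with parameter $1-(1-y_0)\prod_{j\geq1}(1-Y_j)$. Equivalently, $X^\nu$ is the mixture of product measures $\int\Pi_q\,d\mathbb{P}$ directed by $q\coloneqq 1-(1-y_0)\prod_{j\geq1}(1-Y_j)$. Since also $X^\nu\sim\int_0^1\Pi_p\,d\mu(p)$ with $\mu=\mathcal{L}(Q)$, uniqueness of the de Finetti mixing measure yields $Q\overset{d}{=} 1-(1-y_0)\prod_{j\geq1}(1-Y_j)$, as claimed.

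The step I expect to be the main obstacle is the rigorous justification of the marked-Poisson-process realization of $X^{\nu_2}$ in the second paragraph: checking $\sigma$-finiteness and the exact form of the pushforward so that the mapping/marking theorem applies even when $\sigma$ has infinite total mass accumulating at $0$, and verifying that this genuinely yields the conditional coordinatewise independence used above. The remaining steps are routine applications of Lemma~\ref{lemma: 2.10new}, Example~\ref{example: p}, and de Finetti uniqueness.
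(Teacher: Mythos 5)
Your proposal is correct and follows essentially the same route as the paper: the paper's proof consists of the single observation that \( X^\nu \mid (Y_j)_{j\geq 1} \sim \Pi_{1-(1-y_0)\prod_{j\geq 1}(1-Y_j)} \), which is precisely what you justify in detail via the marking/mapping theorem, followed by the same appeal to uniqueness of the de Finetti mixing measure (the simplex property of permutation invariant laws). The only difference is that you make explicit the marked-Poisson realization and the convergence of the product, which the paper leaves to the reader.
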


\begin{proof}
    Assume without loss of generality that \( Y_1 \geq Y_2 \geq \dots\) 
    Then, one observes that
    \[
        X^\nu \mid (Y_j)_{j\geq 1} \sim \Pi_{1 - (1-y_0)\prod_{j\geq 1} (1-Y_j)}.
    \]
    Since we also have \( X^\nu \sim \int_0^1 \Pi_p \, d\mu(p), \) it follows, since the set of permutation invariant measures is a simplex, that 
    \[
        1-(1-y_0)\prod_{j\geq 1} (1-Y_j) \sim \mu.
    \] 
    This concludes the proof.
\end{proof}

\begin{proof}[Proof of Theorem~\ref{theorem: characterization}]
    Let  \( y_0 \) and  \( (Y_j)_{j \geq 1} \) be as in Lemma~\ref{lemma: Malins characterization}. Then, by this lemma,  we have
    \begin{equation*}
        Q \overset{d}{=} 1-(1-y_0)\prod_{j \geq 1}(1-Y_j) ,
    \end{equation*}
    and hence
    \begin{equation*}
        Z =  -\log (1-Q) \sim -\log (1-y_0) - \sum_{j \geq 1} \log(1-Y_j).
    \end{equation*}
    This implies that for \( t \geq 0\) we have
    \begin{equation}\label{eq: expectation of exp}
        \begin{split}
            & \mathbb{E}[e^{-t Z}] 
            = \mathbb{E}[e^{-t(-\log (1-y_0) - \sum_{j \geq 1} \log(1-Y_j))}]= \mathbb{E}[e^{t \log (1-y_0) +t \sum_{j \geq 1} \log(1-Y_j)}]
            \\&\qquad=
            e^{t \log(1-y_0)}\mathbb{E}[e^{ t \sum_{j \geq 1} \log(1-Y_j)}]
            = e^{t \log(1-y_0)} e^{ \int_0^1 (e^{ t \log(1-y) }-1) \, d  \sigma(y)}
            \\&\qquad =
            e^{t \log(1-y_0)} e^{ \int_0^1 ((1-y)^{t} -1) \, d  \sigma(y)}
            = 
            e^{t \log(1-y_0)} e^{ \int_0^\infty (e^{-st} -1) \, d  (\sigma \circ \varphi^{-1})(s)},
        \end{split}
    \end{equation}
    where we use Campbell's formula in the third to last equality.
    This concludes the proof of~\ref{item: characterization 1}.

    To see that~\ref{item: characterization 2} holds, let \( \nu \) be defined by
    \[
     \nu \coloneqq \sum_{j \in \mathbb{N}} \gamma \delta_{\{ j \}} +  \int_0^1 \Pi_x \, d(\hat \sigma \circ \phi^{-1})(x).\] 
    Let \( X' \coloneqq X^\nu. \) Then, by construction, \( X' \in \mathcal{R}.\) We will now show that \( X \overset{d}{=} X'.\)
    To see this, let \( \mu'\) be the de Finetti measure of  \( X',\)  \( Q' \sim \mu', \) and \( Z' \coloneqq -\log(1-Q').\) Let \( Q \sim \mu \) and \( Z \coloneqq -\log(1-Q).\)
    Since the set of permutation invariant measures is a simplex, it suffices to show that \( \mu = \mu', \) or equivalently, that \( Z \overset{d}{=} Z'.\) 
    Using~\eqref{eq: expectation of exp} with \( Z\) replaced by \( Z',\) we obtain \( \mathbb{E}[e^{-t Z}] = \mathbb{E}[e^{-t Z'}]\) for all \( t \geq 0.\) Hence \( Z \overset{d}{=} Z' .\) This concludes the proof of~\ref{item: characterization 2}.
\end{proof}

The following result is a direct consequence of Theorem~\ref{theorem: characterization}.

\begin{corollary}\label{corollary: finite sum of products}
    Let \( m \geq 2, \) \( 0 \leq p_1 < \dots < p_m \leq 1 \) and \( \alpha_1,\dots, \alpha_m \in (0,1) \) be such that \( \sum_{i=1}^m \alpha_i = 1. \) Let \( X = (X_j)_{j \in \mathbb{N}} \sim \sum_{i=1}^m \alpha_i \Pi_{p_i}.\) Then  \( X \in \mathcal{R}\)  if and only if \( m = 2\) and  \( p_m = 1.\)    
\end{corollary}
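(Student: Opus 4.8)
The plan is to deduce the corollary directly from Theorem~\ref{theorem: characterization} together with one elementary fact about infinite divisibility. Since $X \sim \sum_{i=1}^m \alpha_i \Pi_{p_i}$ is permutation invariant with de Finetti measure $\mu = \sum_{i=1}^m \alpha_i \delta_{p_i}$, the variable $Z = -\log(1-Q)$ with $Q \sim \mu$ takes the $m$ \emph{distinct} values $z_i := -\log(1-p_i) \in [0,\infty]$, with $P(Z = z_i) = \alpha_i > 0$, and $z_i = \infty$ exactly when $p_i = 1$; since the $p_i$ are strictly increasing this happens for at most one index, which must then be $i = m$. By Theorem~\ref{theorem: characterization}, $X \in \mathcal{R}$ if and only if $Z$ is infinitely divisible on $[0,\infty]$, so the whole statement reduces to deciding when such a finitely supported $Z$ is infinitely divisible.

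The single substantive input is: a non-degenerate probability law $W$ on $[0,\infty)$ with bounded support is never infinitely divisible. I would either cite the classical fact that the point masses are the only infinitely divisible laws with bounded support, or argue directly from the L\'evy representation~\eqref{eq: laplace}: dividing the exponent by $t$ and letting $t \to \infty$ forces $\gamma = \min \support W$; then letting $t \to \infty$ in $\mathbb{E}[e^{-t(W-\gamma)}]$ and comparing with $P(W = \gamma) > 0$ forces the L\'evy measure $\hat\sigma$ to have finite total mass, and boundedness of $W$ forces $\hat\sigma$ to be supported on a bounded subset of $(0,\infty)$. Thus $W - \gamma$ is a genuine compound Poisson sum $\sum_{j=1}^N V_j$ with $N \sim \Poisson(\hat\sigma((0,\infty)))$ and the $V_j > 0$ i.i.d.\ with finite positive mean; if $W$ is non-degenerate then $P(N = k) > 0$ for all $k$, so $\sum_{j=1}^k V_j \le \sup \support W$ almost surely for every $k$, contradicting the strong law of large numbers.

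Given this, both directions follow by bookkeeping on which $z_i$ equals $\infty$. If $p_m < 1$, all $z_i$ are finite and $Z$ is a finitely supported law on $[0,\infty)$ with $m \ge 2$ atoms, hence not infinitely divisible, so $X \notin \mathcal{R}$. If $p_m = 1$, write $Z = (1-\alpha_m)\,\mathcal{L}(Z') + \alpha_m\,\delta_\infty$ with $Z' := (Z \mid Z < \infty)$ supported on the $m-1$ distinct finite points $z_1, \dots, z_{m-1}$; by the definition of infinite divisibility on $[0,\infty]$, $Z$ is infinitely divisible iff $Z'$ is, and by the previous paragraph that forces $m - 1 = 1$, i.e.\ $m = 2$. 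Conversely, if $m = 2$ and $p_m = 1$ then $Z'$ is the point mass at $-\log(1-p_1)$, trivially infinitely divisible (take $\gamma = -\log(1-p_1)$, $\hat\sigma = 0$), so $Z$ is infinitely divisible on $[0,\infty]$ and $X \in \mathcal{R}$ by part~\ref{item: characterization 2} of Theorem~\ref{theorem: characterization}; this case is also exactly Example~\ref{example: prod p1}. The main (and only) obstacle is the non--infinite-divisibility of bounded non-degenerate laws; everything else is an application of the simplex/uniqueness content already packaged into Theorem~\ref{theorem: characterization}.
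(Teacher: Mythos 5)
Your proposal is correct and follows essentially the same route as the paper: reduce via Theorem~\ref{theorem: characterization} to the infinite divisibility of the finitely supported variable \( Z=-\log(1-Q)\) on \([0,\infty]\), and invoke the classical fact that the only finitely supported infinitely divisible laws with no mass at \( \infty\) are constants. Your extra detail (the compound-Poisson/SLLN argument, which in your phrasing uses \( P(W=\gamma)>0\) and so is valid precisely in the finitely supported case needed here) simply fills in the citation the paper leaves implicit.
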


\begin{proof}
    We have \( X \sim \int_0^1 \Pi_p \, d\mu(p), \) where \( \mu \) has finite support. Since the only finitely supported infinitely divisible random variables with no support at \( \infty \) are constant, the desired conclusion follows from Theorem~\ref{theorem: characterization}.
\end{proof}

\subsection{Examples}

\begin{example}
    Let \(  \sigma\) be the Lebesgue measure on \( [0,1].\) Further, let \( \nu = \int_0^1 \Pi_x\, d\sigma(x),\) and let \( \mu \) be such that \( X^\nu \sim \int \Pi_p \, d\mu(p).\) Then, by Theorem~\ref{theorem: characterization}\ref{item: characterization 1}, for \(Q \sim \mu \) we have that \( -\log(1-Q)\) is infinitely divisible with \( \gamma=0\) and 
    \[
        d\hat \sigma(x) = e^{-x} \, d\sigma(x) =  e^{-x} \, dx.
    \] 
    On the other hand, assume that \( \mu\) is of the form
    \begin{equation*}
        \mu \sim e^{-1} \delta_0 + (1-e^{-1})\int_0^1 \Pi_p  f(p) \, dp,
    \end{equation*}
    for some "nice" probability density function \( f. \)  
    Let \( (Y_i)_{i\geq 1}\) be a Poisson point process with intensity \( \sigma, \) let \( U_1,U_2,\dots \sim \mathrm{unif}(0,1) \) be i.i.d., and let \( Q \sim \mu. \) Then, using Lemma~\ref{lemma: Malins characterization}, for \( t > 0,\) we have
    \begin{align*}
        &P(Q\leq t) = P\bigl(1-\prod_{i\geq 1} (1-Y_i) \leq t \bigr)
         = P\bigl(1-t \leq \prod_{i\geq 1} (1-Y_i) \bigr)
         \\&\qquad = \sum_{k=0}^\infty \frac{e^{-1} }{k!} P\bigl(1-t \leq \prod_{i= 1}^k (1-U_i) \bigr) = \sum_{k=0}^\infty \frac{e^{-1} }{k!} P\bigl(1-t \leq \prod_{i= 1}^k U_i \bigr)
        \\&\qquad = 1-\sum_{k=1}^\infty \frac{e^{-1} }{k!} P\bigl(\prod_{i= 1}^k U_i < 1-t\bigr)
        = 1-\sum_{k=1}^\infty \frac{e^{-1} }{k!} \int_0^{1-t} \frac{(-\log u)^{k-1}}{(k-1)!} \, du.
    \end{align*}
    Taking derivatives of both sides, we obtain
    \begin{align*}
        &f(t) = \frac{d}{dt}P(Q\leq t)   
        = e^{-1} \sum_{k=1}^\infty \frac{(-\log (1-t))^{k-1}}{k!(k-1)!}  
        =
        e^{-1}
        \frac{J_1\left(2 \sqrt{-\log (1-t)}\right)}{\sqrt{-\log (1-t)}},
    \end{align*}
    where \( J_1\) is the Bessel function of the first kind with order 1. 
\end{example}

\begin{example}\label{ex: 4.9}
    Let \( \mu  \) be the uniform measure on \( [0,1]\) and let \( X \sim \int_0^1 \Pi_p \, d\mu(p).\) Let \( Q \sim \mu\) and let \( Z \coloneqq -\log(1-Q).\) Then 
    \begin{equation*}
        P(Z \leq t) = P(-\log(1-Q) \leq t) 
        = P(Q \leq 1-e^{-t} ) = 1-e^{-t} ,
    \end{equation*}
    and hence \( Z\) has density \( f(t) = e^{-t}.\) Since the exponential distribution is infinitely divisible, it follows from Theorem~\ref{theorem: characterization}\ref{item: characterization 2} that \( X \in \mathcal{R}.\)  
    Note that it is well known and easily checked that the exponential distribution with parameter 1 corresponds to \( \gamma=0\) and  \( d\hat \sigma(t) = e^{-t}/t\, dt\) in~\eqref{eq: laplace}. Again using Theorem~\ref{theorem: characterization}\ref{item: characterization 2}, we further obtain in~\eqref{eq: definetti},
    \begin{equation}
        d \sigma(x) = e^{-(-\log(1-x))}/(-\log(1-x)) \cdot \frac{1}{1-x}\, dx = \frac{1}{-\log(1-x)} \, dx
    \end{equation} 
    and
    \[
    \nu(\{1\}) = 0.
    \]
\end{example}

\begin{example}\label{ex: 4.9 beta}
    Let \( \mu\) be a Beta-distribution with parameters \( \alpha , \beta \in (0,\infty)\)  (i.e. let \( \mu\) have density \( x^{\alpha-1} (1-x)^{\beta-1} B(\alpha,\beta)^{-1}\)) and let \( Q \sim \mu.\) Then \( - \log (1-Q)\) is infinitely divisible (see e.g.~\cite[Example VI.12.21]{sh}) with \( \gamma = 0\) and Levy measure
    \[
    d\hat \sigma(t) = \frac{e^{-\beta t} (1-e^{-\alpha t})}{t (1-e^{-t})}\, dt.
    \]
    Hence, by Theorem~\ref{theorem: characterization}\ref{item: characterization 2}, \( X \sim \int_0^1 \Pi_p \, d\mu(p)\) is in \( \mathcal{R}.\) Moreover, we get
    \[
    d \sigma(x)
    = 
    d\hat \sigma(-\log(1-x)) \cdot \frac{1}{1-x}
    = 
    \frac{ (1-x)^{\beta-1} (1-(1-x)^\alpha)}{-x \log(1-x)} \, dx
    \]
    and \( \nu\bigl(\{ 1 \}\bigr) = 0.\) We recover Example~\ref{ex: 4.9} by letting \( \alpha=\beta=1.\)
\end{example}

\section{Finite permutation invariant processes}
  
\subsection{The Curie Weiss model and a finite permutation invariant result}\label{sec: curie weiss}

In this section, we state and prove a theorem in the finite permutation invariant setting, which we then use to show that the supercritical Curie Weiss model is not in \( \mathcal{R} \) for large~\( n.\)

\begin{theorem}\label{theorem: variance and limit}
    For each \( n \geq 1, \) let \( X_n \in \{ 0,1 \}^n\) be permutation invariant. Let \( \bar X_n \coloneqq \bigl(X_n(1)+\dots X_n(n)\bigr)/n. \) Assume that there is \( c \in (0,1),\) such that 
    \begin{equation}\label{eq: limit assumption}
        \lim_{n\to \infty} P(\bar X_n \geq c) = 0
    \end{equation}
    and that \( X_n \in \mathcal{R} \) for each \( n \geq 1. \)  
    Then
    \begin{equation}\label{eq: limit assumption ii}
        \lim_{n \to \infty} \Var(\bar X_n) = 0.
    \end{equation}
\end{theorem}

\begin{proof}
    Let \( \delta \in (0,1), \) and let \( \mathcal{S}^\delta\) be the set of all subsets of \( [n]\) with at least \( \delta n\) elements.

    Let \( \nu_n \geq 0\) be such that \( X = X^{\nu_n}\) (such a \( \nu_n\) exists since \( X_n \in \mathcal{R}\)).

    We will first show that \( \lim_{n \to \infty} \nu_n(\mathcal{S}^\delta) =0.\) To this end, let \( c \in (0,1) \) be such that~\eqref{eq: limit assumption} holds and choose $k$ such that $(1-\delta)^k < \frac{1-c}{2}$.  We now condition on the event $\mathcal{E}_{k,n}$ that the Poisson random variable corresponding to the number of sets chosen from ${S}^\delta$ for the $n$th system is at least $k$. Note that
    \[
        P(\mathcal{E}_{k,n})= \sum_{\ell = k}^\infty
            \frac{e^{-\nu_n(\mathcal{S}^\delta)} \nu_n(\mathcal{S}^\delta)^\ell}{\ell!}.
    \]
    One easily sees that for each $i\in [n]$,
    \begin{equation*}
        P\bigl(X_n(i)= 1\mid \mathcal{E}_{k,n} \bigr)\ge1- (1-\delta)^k \ge \frac{1+c}{2}.
    \end{equation*}
   Applying Markov's inequality conditioned on $\mathcal{E}_{k,n}$, we get
    \begin{align*}
        &P\bigl(\bar X_n \ge c\mid \mathcal{E}_{k,n}\bigr) 
        =
        1-P\Bigl(\sum_{i=1}^n \bigl(1-X_n(i)\bigr)    > (1-c)n\mid \mathcal{E}_{k,n}\Bigr)
        \\&\qquad\ge 1-\frac{\mathbb{E}\bigl[ \sum_{i=1}^n \bigl(1-X_n(i)\bigr)\mid \mathcal{E}_{k,n} \bigr] }  {(1-c)n}
        \ge 1-\frac{(1-c)n/2}{(1-c)n}=1/2.
    \end{align*}
    From this, it follows that
    \begin{equation*}        
        P(\bar X_n \ge c)\ge \frac{P(\mathcal{E}_{k,n})}{2}.
    \end{equation*}
    Using~\eqref{eq: limit assumption}, we obtain
    \[
    \lim_{n \to \infty} P(\mathcal{E}_{k,n}) = 0
    \]
    which implies that
    $\lim_{n\to\infty}\nu_n(\mathcal{S}^\delta)=0$. 
    %

    Now note that, by symmetry, we can write
    \begin{align*}
    	\nu_n(\mathcal{S}_1) = \nu_n(\{ 1\}) + \sum_{j=2}^n \nu_n([j]) \binom{n-1}{j-1} \quad \text{and} \quad \nu_n(\mathcal{S}_{[2]}^\cap ) = \sum_{j=2}^n \nu_n([j]) \binom{n-2}{j-2},
    \end{align*}
    and hence, 
    \begin{align*}
    	&\nu_n(\mathcal{S}_1) \geq  \sum_{j=2}^n \nu_n([j]) \binom{n-1}{j-1}
    	=
    	 \sum_{j=2}^n \nu_n([j]) \binom{n-2}{j-2}\frac{n-1}{j-1}
    	\geq 
    	 \sum_{j=2}^{\delta n} \nu_n([j]) \binom{n-2}{j-2}\frac{n-1}{j-1}
    	\\&\qquad \geq 
    	\frac{n-1}{\delta n} \sum_{j=2}^{\delta n} \nu_n([j]) \binom{n-2}{j-2}
    	\geq \frac{n-1}{\delta n}   \nu_n(\mathcal{S}_{[2]}^\cap \smallsetminus \mathcal{S}^\delta).
    \end{align*}
    Since  $\lim_{n\to\infty}\nu_n(\mathcal{S}^\delta)=0$ for any \( \delta \in (0,1) \) and \( \limsup_{n \to \infty} \nu_n(\mathcal{S}_1) < \infty \) by~\eqref{eq: limit assumption}, it follows that $\lim_{n\to\infty}\nu_n(\mathcal{S}_{[2]}^\cap )=0.$
    \color{black}
    Now note that 
    \begin{align*}
        \mathbb{E}[\bar X_n ] =  \mathbb{E}\bigl[X_n(1)\bigr] = 1-e^{-\nu_n(\mathcal{S}_1)}
    \end{align*} 
    and that
    \begin{align*}
        &\mathbb{E}[\bar X_n^2 ] =  n^{-2}\mathbb{E}\Bigl[\bigl(\sum_{i=1}^n X_n(i)\bigr)^2\Bigr] 
        =  
        n^{-2}\biggl( n \mathbb{E}[X_n(1)^2] + n(n-1) \mathbb{E}\bigl[X_n(1) X_n(2)\bigr] \biggr)
        \\&\qquad=  
        n^{-2}\bigl( n \mathbb{E}\bigl[X_n(1)\bigr] + n(n-1) \mathbb{E}\bigl[X_n(1) X_n(2)\bigr] \bigr)
        \\&\qquad=
        n^{-2}\biggl( n \mathbb{E}[X_n(1)] + n(n-1) \Bigl( \bigl( 1-e^{-\nu_n(\mathcal{S}_{[2]}^\cap)}\bigr) +e^{-\nu_n(\mathcal{S}_{[2]}^\cap)} \bigl(1-e^{-(\nu_n(\mathcal{S}_1) -\nu_n( \mathcal{S}_{[2]}^\cap))} \bigr)^2\Bigr)  \biggr)
        \\&\qquad=
        n^{-1} \mathbb{E}[X_n(1)] + (1-n^{-1})  \bigl( 1  -2   e^{-\nu_n(\mathcal{S}_1  )} + e^{-2\nu_n(\mathcal{S}_1) +\nu_n( \mathcal{S}_{[2]}^\cap)} \bigr)  
        \\&\qquad=
        n^{-1} \mathbb{E}[X_n(1)] + (1-n^{-1})  \bigl( (1  -   e^{-\nu_n(\mathcal{S}_1  )})^2 - e^{-2\nu_n(\mathcal{S}_1  )} (1- e^{ \nu_n( \mathcal{S}_{[2]}^\cap)}) \bigr)   .
    \end{align*}
    Combining these equations, we obtain
    \begin{align*} 
    	\Var (\bar X_n )
    	&=
    	n^{-1} \mathbb{E}[X_n(1)] - n^{-1}  \bigl( (1  -   e^{-\nu_n(\mathcal{S}_1  )})^2 - e^{-2\nu_n(\mathcal{S}_1  )} (1- e^{ \nu_n( \mathcal{S}_{[2]}^\cap)}) \bigr) 
    	 \\&\qquad\qquad  - e^{-2\nu_n(\mathcal{S}_1  )} (1- e^{ \nu_n( \mathcal{S}_{[2]}^\cap)})  .
    \end{align*} 
    Since, \( \lim_{n \to \infty} \nu_n( \mathcal{S}_{[2]}^\cap) = 0, \) the desired conclusion immediately follows.
\end{proof}

 Throughout this section, we let \( \mu_{J,n}\) be the Curie-Weiss model on \( [n] \) with parameter \( J=\beta/n,\) so that
    \begin{align*}
        &\mu_{J,n}(\sigma) = Z_{J,n}^{-1} e^{J \sum_{i < j} \sigma_i \sigma_j },\quad \sigma \in \{ -1,1 \}^n .
    \end{align*}
    In other words, we let \( \mu_{J,n} \) be the Ising model with inverse temperature \( J \) on the complete graph with vertices labeled by \( [n] .\) 
    It is well known that there is \( \beta_c \in (0,\infty )\) and  \( (c_\beta) \) such that  if \( \sigma  \sim \mu_{n,J} \) then \( \bar \sigma \) concentrates at \( 0 \pm c_\beta\) as \( n \to \infty,\) where
    \begin{enumerate}
        \item \( c_\beta=0\) if \( \beta \leq \beta_c.\) 
        
        \item \( 0 < c_\beta < 1 \) if \( \beta > \beta_c.\)   
    \end{enumerate}
    \color{black}

\begin{theorem}\label{theorem: curie weiss}
    Let  \( \mu_{J,n}\) be the Curie-Weiss model on \( [n] \) with parameter \( J=\beta/n,\) 
    and let \( X_n \in \{ 0,1\}^n\) be the corresponding \( \{0,1\}\)-valued random vector
    where we have identified $-1$ with $0$. 
    If \( \beta > \beta_c\)  and \( n \) is sufficiently large, then the Curie-Weiss model is not in $\mathcal{R}$.
\end{theorem}

\begin{proof}
    Applying Theorem~\ref{theorem: variance and limit}, the desired conclusion immediately follows.
\end{proof}

\subsection{Finite averages of product measures}

    In Corollary~\ref{corollary: finite sum of products}, we considered finite averages of product measures, and showed that these were not in \( \mathcal{R}\) except in very few special cases.
    In this setting, given \( X \notin \mathcal{R}\), there is an $N\ge 3$ such
    that \( X([n]) \in \mathcal{R}\) for $n<N$ and  \( X([n]) \notin \mathcal{R}\) for 
    $n\ge N$.
    The reason for the "3" is that any average of product measures $X$  is positively associated and hence by the discussion after Lemma~\ref{lemma: finite to infinite}, $X([2])$ is always in $\mathcal{R}$.

    Our averages of product measures have precisely $2m-1$ parameters. The next theorem says that we can reduce it to $2m-2$ parameters. This might not seem like a giant improvement, but when $m=2$, we then have only two parameters. This is the case which we will  analyze in most detail and we can then visualize the phase diagram reasonably well since it is just two-dimensional.
    In this theorem, we use the following notation. Given \( \mathbf{x} = (x_1,x_2,\dots, x_m) \) such that  \( 1= x_1 > x_2 > \dots > x_m \geq 0, \) \( q \in (0,1], \) and \( \pmb{\alpha} = (\alpha_1,\dots , \alpha_m) \in (0,1)^m\) such that \( \sum_{i=1}^m \alpha_i = 1, \) we let 
\[
    X^{q,\mathbf{x},\pmb{\alpha}} \sim \sum_{i=1}^m \alpha_i \Pi_{1-qx_i}.
\] 
Using this particular form for the product measures is advantageous since, as stated in the following theorem, being representable turns
out to be independent of \( q.\)

\begin{theorem}\label{proposition: stright lines}
    Let \( n \geq 2. \) Let \( \mathbf{x} = (x_1,x_2,\dots, x_m) \) be such that  \( 1= x_1 > x_2 > \dots > x_m \geq 0, \) \( q \in (0,1], \) and \( \pmb{\alpha} = (\alpha_1,\dots , \alpha_m) \in (0,1)^m\) be such that \( \sum_{i=1}^m \alpha_i = 1. \) 
    Then
    \( X^{q,\mathbf{x},\pmb{\alpha}}([n])  \in \mathcal{R}\) if and only if  \( X^{q'
    ,\mathbf{x},\pmb{\alpha}}([n]) \in \mathcal{R}\) for all \( q' \in (0,1]. \)
\end{theorem}

The proof of Theorem~\ref{proposition: stright lines} will use the following lemma.

\begin{lemma}\label{lemma: form of nu}
    Let \( n \geq 2.\) 
    Let \( \mathbf{x} = (x_1,x_2,\dots, x_m) \) be such that  \( 1= x_1 > x_2 > \dots > x_m \geq 0, \) \( q \in (0,1], \) and \( \pmb{\alpha} = (\alpha_1,\dots , \alpha_m) \in (0,1)^m\) be such that \( \sum_{i=1}^m \alpha_i = 1. \) 
    Let \( \nu_n\) be the unique signed measure corresponding to \( X^{q,\mathbf{x},\pmb{\alpha}}([n]).\) Then, for \( \ell \in [n],\)
    \begin{align}   
        &\nu_n\bigl([\ell]\bigr)
        = \sum_{j=0}^\ell  (-1)^{\ell-j} 
        \binom{\ell}{j}  \log  \sum_{i=1}^m \alpha_i (qx_i)^{n-j}  \label{eq: general probability}
        \\&\qquad=\label{eq: general probability ii}
        \begin{cases}
        -\log q + \log \frac{\sum_{i=1}^m \alpha_i  x_i^{{n-1}} 
        }{ \sum_{i=1}^m \alpha_i   x_i^{n}  }    &\text{if } \ell =1\cr 
        \sum_{j=0}^\ell  (-1)^{\ell-j} 
        \binom{\ell}{j}  \log  \sum_{i=1}^m \alpha_i x_i^{n-j} &\text{if } \ell \in \{ 2,3, \dots, n \}.
        \end{cases} 
    \end{align}
    Moreover, \( \nu_n(\{1\}) \geq -\log q\) and \( \nu_n([2])\geq 0.\)
\end{lemma}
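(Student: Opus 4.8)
The plan is to derive the formula straight from the Möbius inversion formula of Lemma~\ref{lemma: unique signed}, for which the one computational input is the family of probabilities $P_k \coloneqq P\bigl(X^{q,\mathbf{x},\pmb{\alpha}}([n])(I)\equiv 0\bigr)$ for $|I|=k$. Conditioning on which of the $m$ product measures is in force and using that under $\Pi_{1-qx_i}$ each coordinate is $0$ with probability $qx_i$ independently, one gets $P_k=\sum_{i=1}^m\alpha_i(qx_i)^k$. Since $x_1=1$, $q>0$ and $\alpha_1>0$ (and with the convention $0^0=1$ covering the case $x_m=0$, $k=0$), all $P_k$ are strictly positive, so Lemma~\ref{lemma: unique signed} applies and $\nu_n$ is the unique signed measure given by~\eqref{eq: Mobius inversion result}. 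Because $X^{q,\mathbf{x},\pmb{\alpha}}([n])$ is an average of product measures, hence permutation invariant, $\nu_n(K)$ depends only on $|K|$; taking $K=[\ell]$ and grouping the subsets $I\subseteq[\ell]$ by $j=|I|$ (each size $j$ occurring $\binom{\ell}{j}$ times, with $|[n]\setminus I|=n-j$) turns~\eqref{eq: Mobius inversion result} into $\nu_n([\ell])=\sum_{j=0}^\ell(-1)^{\ell-j}\binom{\ell}{j}\log P_{n-j}$, which is exactly~\eqref{eq: general probability}.

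For~\eqref{eq: general probability ii} I would split off the $q$-dependence using $\log P_{n-j}=(n-j)\log q+\log\sum_{i=1}^m\alpha_i x_i^{n-j}$. The coefficient of $\log q$ is then $\sum_{j=0}^\ell(-1)^{\ell-j}\binom{\ell}{j}(n-j)$, and since the operator $f\mapsto\sum_{j=0}^\ell(-1)^{\ell-j}\binom{\ell}{j}f(j)$ is the $\ell$-th finite difference and thus annihilates every polynomial in $j$ of degree $<\ell$, this coefficient vanishes for $\ell\ge 2$ and equals $-1$ for $\ell=1$ (direct check: $-n+(n-1)$). This gives the stated case split, the $\ell=1$ line reducing to $-\log q+\log\bigl(\sum_i\alpha_i x_i^{n-1}\big/\sum_i\alpha_i x_i^n\bigr)$.

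Finally, for the two inequalities: $\nu_n(\{1\})\ge-\log q$ reduces to $\sum_i\alpha_i x_i^{n-1}\ge\sum_i\alpha_i x_i^n$, which holds term by term since $x_i\in[0,1]$ forces $x_i^{n-1}\ge x_i^n$. And $\nu_n([2])=\log\sum_i\alpha_i x_i^{n-2}-2\log\sum_i\alpha_i x_i^{n-1}+\log\sum_i\alpha_i x_i^n\ge 0$ is precisely the log-convexity of $k\mapsto\sum_i\alpha_i x_i^k$, which holds because this is a nonnegative combination of the log-linear (hence log-convex) functions $k\mapsto x_i^k=e^{k\log x_i}$; equivalently it is Cauchy--Schwarz applied to the vectors $(\sqrt{\alpha_i}\,x_i^{(n-2)/2})_i$ and $(\sqrt{\alpha_i}\,x_i^{n/2})_i$. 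None of this is deep; the only points needing genuine care are verifying strict positivity of all the $P_k$ so that Lemma~\ref{lemma: unique signed} is legitimately invoked, and keeping the finite-difference bookkeeping that eliminates the $\log q$ terms free of sign or index errors — that is the step most likely to trip one up.
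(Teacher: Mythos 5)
Your proposal is correct and follows essentially the same route as the paper: compute $P\bigl(X([n]\smallsetminus I)\equiv 0\bigr)=\sum_i\alpha_i(qx_i)^{n-|I|}$, plug into the M\"obius inversion formula~\eqref{eq: Mobius inversion result}, kill the $\log q$ contribution for $\ell\ge 2$ via the finite-difference identity, and obtain the two inequalities from monotonicity of $x_i^k$ and Cauchy--Schwarz. Your explicit check that all the zero-probabilities are strictly positive (so that Lemma~\ref{lemma: unique signed} applies) is a small point of care the paper leaves implicit.
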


\begin{proof}
    For \( j \in [n],\) we have \[
    P\bigl(X\bigl([n]\smallsetminus [j]\bigr) \equiv 0 \bigr) = \sum_{i=1}^m \alpha_i (qx_i)^{n-j}.
    \]
    Using~\eqref{eq: Mobius inversion result},  we obtain~\eqref{eq: general probability}. From this, noting that 
    \(
    \sum_{j=0}^\ell  (-1)^{\ell-j} 
        \binom{\ell}{j}  (n-j)\log   q = 0
    \)
    if \( \ell \geq 2,\) we obtain~\eqref{eq: general probability ii}.
    From this, using the Cauchy-Schwarz inequality for the second inequality, we obtain
    \begin{align*} 
        &\nu_n \bigl(\{1\} \bigr)
        =   
        -\log q + \log \frac{\sum_{i=1}^m \alpha_i  x_i^{{n-1}} 
        }{ \sum_{i=1}^m \alpha_i   x_i^{n}  }   \geq - \log q,
    \end{align*} 
    and
    \begin{align*} 
        &\nu_n(\{ 1,2 \})
        =   
        \log \frac{\sum_{i=1}^m \alpha_i   x_i^{n-2}    \sum_{i=1}^m \alpha_i   x_i^n}{(\sum_{i=1}^m \alpha_i   x_i^{n-1} )^2} \geq 0.
    \end{align*}
\end{proof}

\begin{proof}[Proof of Theorem~\ref{proposition: stright lines}]
    Assume that \( X^{q,\mathbf{x},\pmb{\alpha}}([n])  \in \mathcal{R}.\) Then there is a measure \( \nu\) on \( \mathcal{P}([n])\smallsetminus \{ \emptyset \} \) such that \( X^{q,\mathbf{x},\pmb{\alpha}}([n]) = X^\nu.\) 
    Let \( \nu'\) be the unique signed measure corresponding to \( X^{q',\mathbf{x},\pmb{\alpha}}.\) 
    Let \( A \subseteq [n] \) satisfy \( |A|>1.\)
    By Lemma~\ref{lemma: form of nu}, we then have that \( \nu(A)= \nu'(A),\) and hence, since \( \nu \) is a non-negative measure it follows that \( \nu'(A)\geq 0 .\)  
    Finally, we note that, again by Lemma~\ref{lemma: form of nu}, we have \( \nu'(\{1\}) \geq - \log q' \geq 0.\) Hence \( \nu'\) is a non-negative measure, implying that \( X^{q',\mathbf{x},\pmb{\alpha}}([n]) \in \mathcal{R}.\) This concludes the proof.
\end{proof}

The following proposition begins to give information about the phase diagram for when \( X([n]) \in \mathcal{R} \) by explaining what happens when $\alpha_1$ is sufficiently close to zero or one \color{blue}or\color{black}{} when $x_2$ is sufficiently close to zero (see Figure~\ref{fig: finite case rays 3}). (Recall that by Theorem~\ref{proposition: stright lines}, for any \( n, \) \( X([n]) \) being in \( \mathcal{R}\) is independent of~\( q.\))

\begin{proposition}
    Let \( \mathbf{x} = (x_1,x_2,\dots, x_m) \) be such that  \( 1= x_1 > x_2 > \dots > x_m \geq 0, \) \( q \in (0,1], \) and \( \pmb{\alpha} = (\alpha_1,\dots , \alpha_m) \in (0,1)^m\) be such that \( \sum_{i=1}^m \alpha_i = 1. \) Let \( X \sim \sum_{i=1}^m \alpha_i \Pi_{1-qx_i}\) and \( n \geq 3.\) 
    \begin{enumerate}[label=(\alph*)]
        
        \item If \( m \geq 2, \) then   \( X([n]) \in \mathcal{R}\) if \( x_2 \) is sufficiently close to \(0\) (depending on \( \alpha_1 \))\color{black}.\label{item: asymptotics 3}

        \item If \( m \geq 2, \) then   \( X([n]) \in \mathcal{R}\) if  \( \alpha_1 \) is sufficiently close to \(1\).\label{item: asymptotics 6}

        \item If  \( m = 2, \) then   \( X([n]) \notin \mathcal{R}\) if \( x_2>0\) and \( \alpha_1 \) is sufficiently close to \(0\) (depending on \( x_2 \) and \( n\))\color{black}. (This is not true in general for \( m \geq 3.\))\label{item: asymptotics 7}
    \end{enumerate}    
\end{proposition}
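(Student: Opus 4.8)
The plan is to reduce the whole statement to checking the sign of finitely many explicit numbers. By Theorem~\ref{proposition: stright lines} we may take $q=1$, so $X \sim \sum_i \alpha_i \Pi_{1-x_i}$. Since $X$ is permutation invariant and $P(X(I)\equiv 0)=\sum_i\alpha_i x_i^{|I|}\ge \alpha_1>0$, Lemma~\ref{lemma: unique signed} produces its unique signed representing measure $\nu_n$, which (by uniqueness) depends only on cardinality; hence $X([n])\in\mathcal R$ iff $\nu_n([\ell])\ge 0$ for every $\ell\in[n]$. By Lemma~\ref{lemma: form of nu} the cases $\ell\in\{1,2\}$ hold automatically, so only $3\le\ell\le n$ is at issue. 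Writing $H(t):=\sum_i\alpha_i x_i^{t}$, a reindexing of \eqref{eq: general probability} yields the compact identity $\nu_n([\ell]) = (-1)^\ell\,\Delta^\ell[\log H](n-\ell)$, where $\Delta$ is the forward difference operator; since $\Delta^\ell$ annihilates polynomials of degree $<\ell$ and $\Delta^\ell[a^{k}](t)=(a-1)^\ell a^{t}$, each $\nu_n([\ell])$ is governed by a first-order expansion of $\log H$ in the relevant small parameter.

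For (a): fix $\pmb\alpha$ and let $x_2\to 0$ (so $x_i\to0$ for all $i\ge2$). Then $\log H(k)=\log\alpha_1+\log(1+s_k)$ with $s_k=O(x_2^{\,k})$ for $k\ge1$, and $\log H(0)=0$. Plugging into the identity, the constant $\log\alpha_1$ is killed for $3\le\ell<n$ and $\log(1+s)=s+O(s^2)$ gives $\nu_n([\ell])=\alpha_1^{-1}\sum_{i\ge2}\alpha_i x_i^{\,n-\ell}(1-x_i)^\ell+O\!\big(x_2^{\,2(n-\ell)}\big)$, whose (positive) leading term dominates the error since $n-\ell\ge1$; for $\ell=n$ the constant contributes $-\log\alpha_1>0$ and the remainder is $O\!\big((1+x_2)^n-1\big)\to0$. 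For (b): if $x_2=0$ then $X=\alpha_1\Pi_{1-q}+(1-\alpha_1)\Pi_1\in\mathcal R$ by Corollary~\ref{corollary: finite sum of products}; otherwise $x_2>0$, and fixing $\mathbf x$ and the ratios $(\alpha_2:\dots:\alpha_m)$ while $\beta:=1-\alpha_1\to0$ gives $\log H(k)=-\sum_{i\ge2}\alpha_i(1-x_i^{\,k})+O(\beta^2)$, hence $\nu_n([\ell])=\sum_{i\ge2}\alpha_i x_i^{\,n-\ell}(1-x_i)^\ell+O(\beta^2)$; the leading term is of order $\beta$ and strictly positive (as $0<x_2<1$), so it dominates for $\beta$ small, and in both parts $\nu_n([\ell])>0$ for $3\le\ell\le n$, so $X([n])\in\mathcal R$.

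For (c) with $m=2$: fix $x:=x_2\in(0,1)$ and let $\alpha_1\to0$. Since $x$ is fixed we have $\alpha_1\ll x^{\,n}$, so $H(k)=\alpha_2 x^{k}(1+\eta_k)$ with $\eta_k=\alpha_1/(\alpha_2 x^{k})=O(\alpha_1)$; the affine-in-$k$ part $\log\alpha_2+k\log x$ of $\log H(k)$ is annihilated, so $\nu_n([3])=\sum_{j=0}^{3}(-1)^{3-j}\binom{3}{j}\log(1+\eta_{n-j})+O(\alpha_1^2)= -\tfrac{\alpha_1}{\alpha_2}x^{-n}(1-x)^3+O(\alpha_1^2)$, which is strictly negative for $\alpha_1$ small, whence $X([n])\notin\mathcal R$. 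For the parenthetical ($m\ge3$), take $n=3$, $\mathbf x=(1,\tfrac12,0)$ and $\pmb\alpha=(\eps,\tfrac{1-\eps}{2},\tfrac{1-\eps}{2})$; the smaller cases $\ell\in\{1,2\}$ are automatic, while $\nu_3([3])=\log\!\big(H(2)^3/(H(3)H(1)^3)\big)\to\log\!\big((1/8)^3/((1/16)(1/4)^3)\big)=\log 2>0$ as $\eps\to0$, so $X([3])\in\mathcal R$ for $\eps$ small although $x_2=\tfrac12>0$ and $\alpha_1=\eps$ is arbitrarily small.

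The main obstacle is bookkeeping: in each part one must check that the first-order term has a definite sign and strictly dominates the quadratic error uniformly in the parameters allowed to vary (in (a), uniformly over the admissible $x_3,\dots,x_m<x_2$; in (b), over the chosen direction of $\pmb\alpha$), and one must dispose of the degenerate boundary configurations separately — most notably $x_2=0$ in (b), where one simply falls back on Corollary~\ref{corollary: finite sum of products}. Everything else is the identity $\nu_n([\ell])=(-1)^\ell\Delta^\ell[\log H](n-\ell)$ together with the elementary facts that $\Delta^\ell$ kills low-degree polynomials and acts diagonally on geometric sequences, plus a Taylor expansion of $\log(1+\cdot)$.
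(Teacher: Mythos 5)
Your argument is correct and is essentially the paper's own proof: in each part one Taylor-expands the explicit M\"obius-inversion formula of Lemma~\ref{lemma: form of nu} in the relevant small parameter, notes that the cases \( \ell\in\{1,2\}\) are automatic, and checks that the sign of the first-order term (which dominates the quadratic error) is positive in (a) and (b) and is \( (-1)^\ell\) times a positive quantity in (c), so that \( \ell=3\) gives a negative weight. Your packaging via \( \nu_n([\ell])=(-1)^\ell\Delta^\ell[\log H](n-\ell)\) is just a cleaner way of organizing the same cancellation the paper carries out by hand, and your treatment of the boundary case \( x_2=0\) via Corollary~\ref{corollary: finite sum of products} replaces the paper's direct computation; the one genuine addition is your explicit example \( n=3,\ \mathbf{x}=(1,\tfrac12,0),\ \pmb{\alpha}=(\eps,\tfrac{1-\eps}{2},\tfrac{1-\eps}{2})\) with \( \nu_3([3])\to\log 2>0,\) which verifies the parenthetical claim in (c) that the paper merely asserts without proof.
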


\begin{proof}
     Assume first that \( x_2 = 0.\) In this case, one verifies that
    \begin{align*}
        &\nu_n\bigl([k]\bigr) = \begin{cases}
            -\log q &\text{if } k =1 \cr 
            -\log \alpha_1&\text{if } k =n \cr 
            0&\text{otherwise,}
        \end{cases} 
        \end{align*}
    and hence \( X([n]) \in \mathcal{R}.\)
    In the rest of the proof, we therefore assume that \( x_2>0.\)
    
    We first show that~\ref{item: asymptotics 3} holds. 
    To this end, assume that  \( x_2\) is small enough so that \( \alpha_1^{-1}\sum_{i=2}^m \alpha_i x_i <1. \) Then, using~\eqref{eq: general probability ii}, a Taylor expansion, and the assumption that \( \sum_{i=1}^m \alpha_i = 1, \) we obtain that, for any  \( k \in \{ 2,3, \dots , n \}, \)  
     \begin{align*}
        &\nu_n\bigl([k]\bigr)
        = \sum_{j=0}^{\min(k,n-1)}  (-1)^{k-j} 
        \binom{k}{j}  \log \bigl(\alpha_1 +  \sum_{i=2}^m \alpha_i x_i^{n-j}\bigr) 
        \\&\qquad=
        \sum_{j=0}^{\min(k,n-1)}  (-1)^{k-j} 
        \binom{k}{j} \biggl( \log \alpha_1 + \sum_{\ell=1}^\infty \frac{(-1)^{\ell+1}}{\ell}  \bigl(\alpha_1^{-1}\sum_{i=2}^m \alpha_i x_i^{n-j}\bigr)^\ell  \biggr)
        \\&\qquad=
        \begin{cases}
            \alpha_1^{-1}\sum_{i=2}^m \alpha_i x_i^{n-k} +O(x_2^{n-k+1}) &\text{if } k \in \{ 2,3,\dots, n-1 \} \cr  
            -\log \alpha_1 + O(x_2)&\text{if } k=n.
        \end{cases}
    \end{align*} 
    Finally, we note that by Lemma~\ref{lemma: form of nu}, \( \nu_n(\{1\})\geq -\log q.\)  From this~\ref{item: asymptotics 3} immediately follows.

    We now show that~\ref{item: asymptotics 6} holds. To this end, assume that \( \alpha_1\) is close enough to \(1\) to ensure that \( \alpha_1^{-1}\sum_{i=2}^m \alpha_i x_i <1. \) Then, for \( k \in \{ 2,3, \dots, n\},\) as above, it follows that for \(\alpha_1 \to 1\), 
    \begin{align*}
        &\nu_n\bigl([k]\bigr)
        =  
        \sum_{j=0}^{\min(k,n-1)}  (-1)^{k-j} 
        \binom{k}{j} \biggl( \log \alpha_1 + \sum_{\ell=1}^\infty \frac{(-1)^{\ell+1}}{\ell}  \bigl(\alpha_1^{-1}\sum_{i=2}^m \alpha_i x_i^{n-j}\bigr)^\ell  \biggr)
        \\&\qquad= 
        \alpha_1^{-1} \sum_{i=2}^m \alpha_i x_i^{n-k} (1-x_i)^k   +O((1-\alpha_1)^2) . 
    \end{align*} 
    The last equality, when \( k=n,\) uses the observation that \( \log \alpha_1 + \alpha_1^{-1}(1-\alpha_1) = O((1-\alpha_1)^2) .\) 
    Since \( \nu_n(\{1\})\geq -\log q\) by Lemma~\ref{lemma: form of nu}, it follows that \( X([n]) \in \mathcal{R}\) when \( \alpha_1\) is close to \(1\). This concludes the proof of~\ref{item: asymptotics 6}.

    We now show that~\ref{item: asymptotics 7} holds. To this end, assume that \( m = 2,\) and that  \( \alpha_1\) is close enough to \(0\) to ensure that \( \alpha_1    <(1-\alpha_1) x_2^{n}. \) Then, for \( k \in \{ 2,3, \dots, n\},\) by~\eqref{eq: general probability ii} and a Taylor expansion, it follows that for \(\alpha_1 \to 0\),
    \begin{align*}
        &\nu_n\bigl([k]\bigr)
        = 
        \sum_{j=0}^k  (-1)^{k-j} 
        \binom{k}{j}  \log \bigl(\alpha_1 +   \alpha_2 x_2^{n-j}\bigr) 
        \\&\qquad=
        \sum_{j=0}^k  (-1)^{k-j} 
        \binom{k}{j} \biggl( \log (   \alpha_2 x_2^{n-j} )+ \sum_{\ell=1}^\infty \frac{(-1)^{\ell+1}}{\ell} \bigl(\alpha_1 /   (\alpha_2 x_2^{n-j})\bigr)^\ell \biggr)
        \\&\qquad=
        \sum_{j=0}^k  (-1)^{k-j} 
        \binom{k}{j} \bigl( \log  (\alpha_2 x_2^{n-j})+  \alpha_1 /  (\alpha_2 x_2^{n-j} ) \bigr)+ O(\alpha_1^2).
    \end{align*}
    Noting that
    \[
    \sum_{j=0}^k  (-1)^{k-j} 
        \binom{k}{j}   \log  \alpha_2  
        = 
        0
        =
    \sum_{j=0}^k  (-1)^{k-j} 
        \binom{k}{j} (n-j)\log  x_2 ,
    \]
    it follows that
    \begin{align*} 
        &\nu_n\bigl([k]\bigr)
        = 
        (-1)^k \frac{\alpha_1 }{1-\alpha_1} x_2^{-n}(1-x_2)^k  + O(\alpha_1^2).
    \end{align*} 
    This concludes the proof of~\ref{item: asymptotics 7}. 
\end{proof}

The case where $m=2$ and $x_2$ is taken to be close to  \(1\) (corresponding to two product measures having similar densities) is the most intriguing and undergoes a phase transition in $\alpha_1$ where the critical value corresponds
to the largest negative zero of the so-called polylogarithm function as we will see in the following theorem.
In addition, this result will later be used to show that certain positively associated tree-indexed Markov chains, as well as the Ising model with certain parameters, are not in \( \mathcal{R}.\)

\begin{figure}[htp]
    \centering
    
    \begin{subfigure}[b]{0.3\textwidth}\centering
    \includegraphics[width=\textwidth]{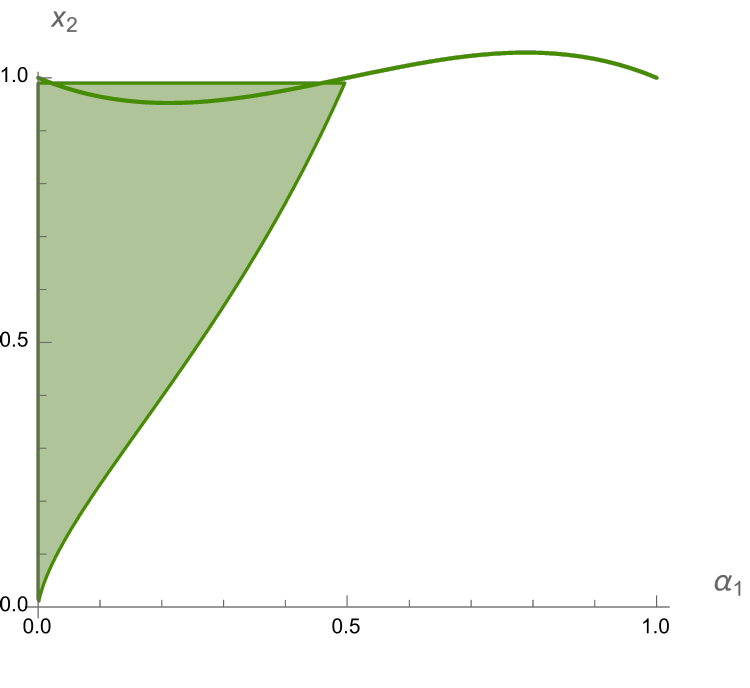}
    \caption{\( n=3,\) \( k=3\)}
    \end{subfigure}
    
    \begin{subfigure}[b]{0.3\textwidth}\centering
    \includegraphics[width=\textwidth]{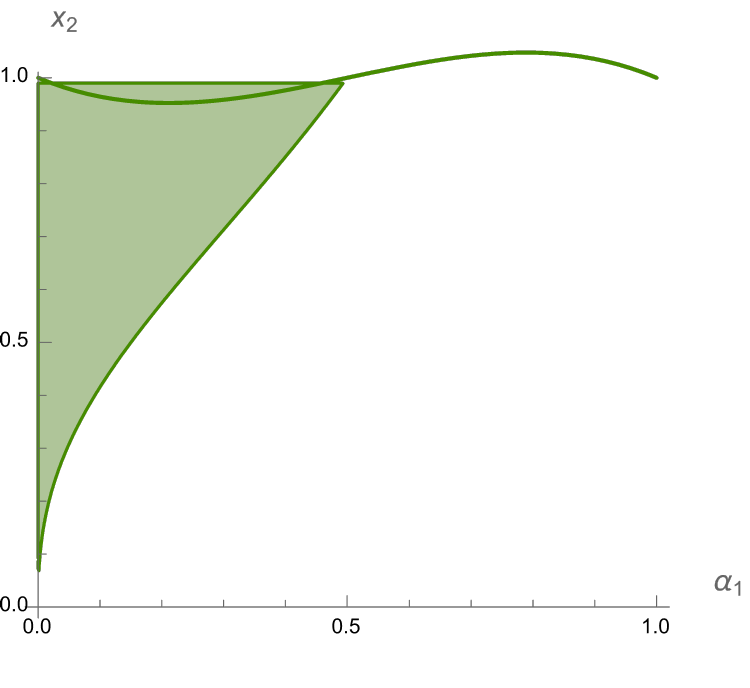}
    \caption{\( n=4,\) \( k =3\)}
    \end{subfigure}
    \begin{subfigure}[b]{0.3\textwidth}\centering
    \includegraphics[width=\textwidth]{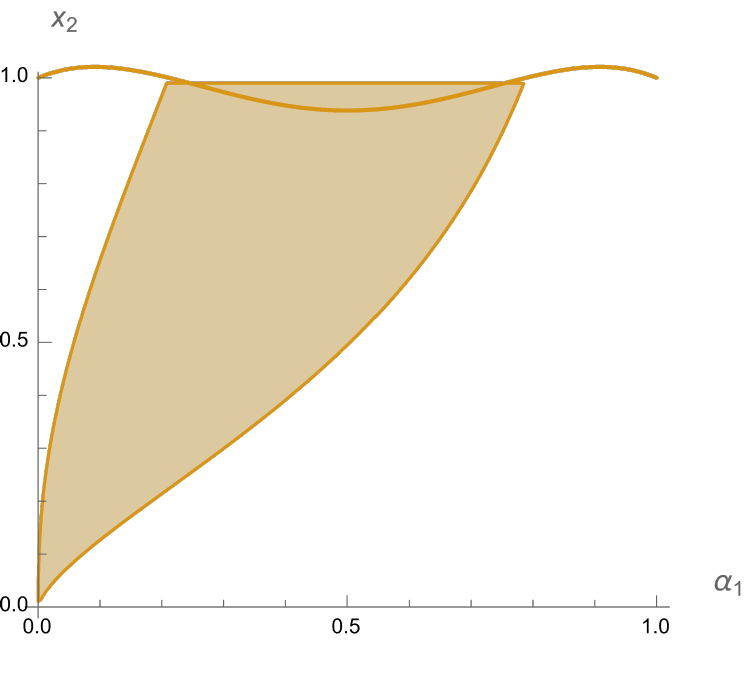}
    \caption{\( n=4,\) \( k=4\)}
    \end{subfigure}

    \begin{subfigure}[b]{0.3\textwidth}\centering
    \includegraphics[width=\textwidth]{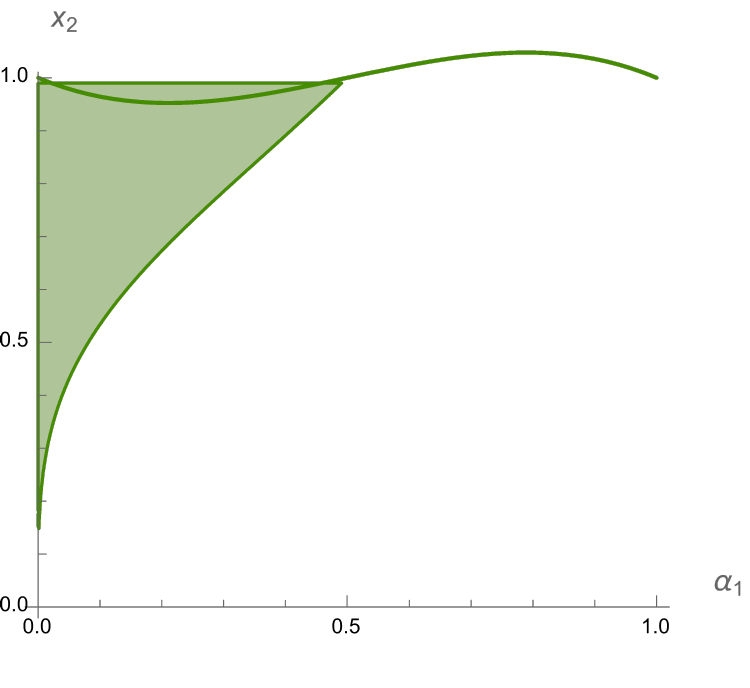}
    \caption{\( n=5,\) \( k=3\)}
    \end{subfigure}
    \begin{subfigure}[b]{0.3\textwidth}\centering
    \includegraphics[width=\textwidth]{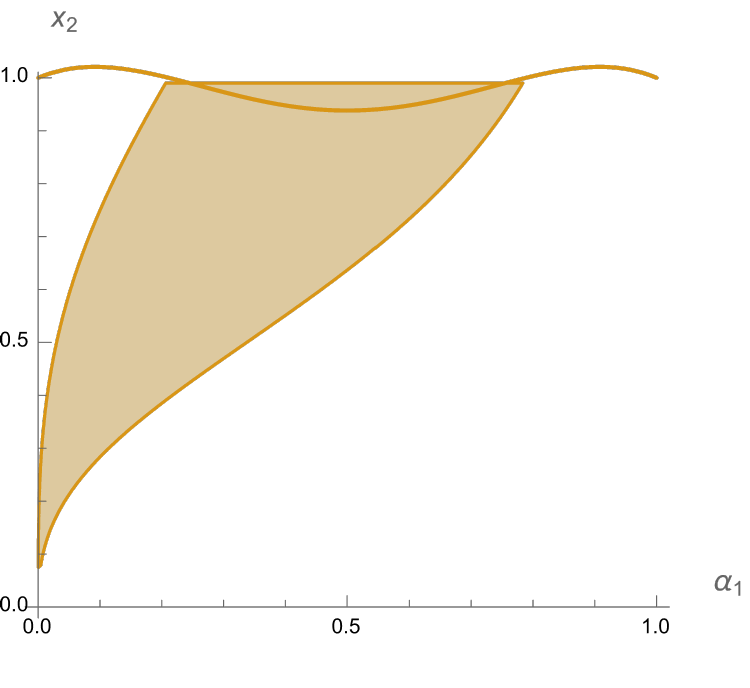}
    \caption{\( n=5,\) \( k =4\)}
    \end{subfigure}
    \begin{subfigure}[b]{0.3\textwidth}\centering
    \includegraphics[width=\textwidth]{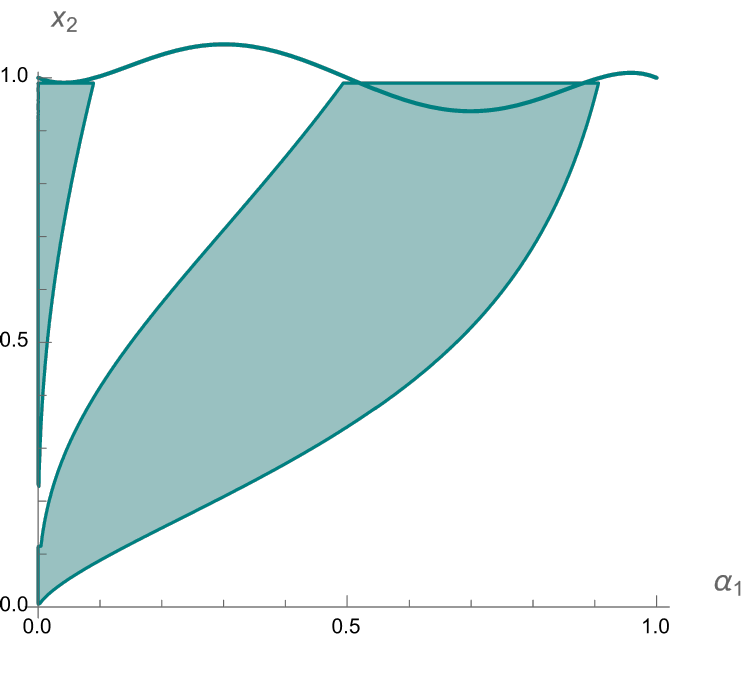}
    \caption{\( n=5,\) \( k =5\)}
    \end{subfigure}

    \caption{In the figures above, we draw the regions where \( \nu_n([k])<0 \) as a function of (our only parameters) \(x_2 \) (on the \( y\)-axis) and \( \alpha_1 \) (on the \( x\)-axis) for \( m=2,\) \( n=3,4,5,\) and \( k\in \{3,\dots, n\}.\) The curves along the \( x\)-axis are the polylogarithm functions which appear in Theorem~\ref{proposition: finite cases}\ref{item: general 1}.}
    \label{fig: finite case rays 3}
\end{figure}

In this result, we will use \( \Li_s \) to denote the polylogarithm function with index \( s \in \mathbb{R},\) defined for \( z \in \mathbb{C}\) with \( |z|<1\) by
\begin{equation*}
    \Li_s(z) \coloneqq \sum_{k=1}^\infty \frac{z^k}{k^s}
\end{equation*}
and extended to  all \( z \in \mathbb{C}\) (except for poles) by analytic continuation. When \( s\) is a negative integer the function \(\Li_s(z) \) is a rational function.

The following properties of the polylogarithm functions are standard and easy to show.
    \begin{enumerate}[label=(\arabic*)]
        \item \label{item: polylog 1} For all \( x \in \mathbb{R}\) and \( j \in \mathbb{Z},\) we have \( \Li_j'(x)=\Li_{j-1}(x)/x . \) 
        \item For all \( n  \geq 2,\)  \( \Li_{1-n}\) has exactly \( n \) distinct non-positive roots \( r_1^{(n)}=0>r_2^{(n)}>\dots > r_n^{(n)} = -\infty\) which satisfy the interlacement property
        \begin{equation*}
            0=r_1^{(n+1)} = r_1^{(n)}>r_2^{(n+1)}>r_2^{(n)}>\dots > r_n^{(n+1)} > r_n^{(n)} = r_{n+1}^{(n+1)} = -\infty.
        \end{equation*} 
        \item  For all \( n \geq 2 ,\) \( {\Li_{1-n}(x) < 0} \) for all \( x \in (r_2^{(n)},0).\)
        \item For all \( n \geq 3 ,\) \( \Li_{1-n}(x)>0\) for all \( x \in (r_3^{(n)},r_2^{(n)}).\) Hence, for any \( n \geq 4, \) since \( r_2^{(n-1)} \in (r_3^{(n)},r_2^{(n)}), \) we have \( \Li_{1-n}(r_2^{(n-1)})>0.\)\label{item: polylog 4}
        \item \( \lim_{n \to \infty} r_2^{(n)} = 0.\)
    \end{enumerate}  
One verifies that  \( r_2^{(3)}=-1, \) \(r_2^{(4)} = \sqrt{3}-2, \) and  \( r_2^{(5)} =2\sqrt{6}-5 .\)

In the next theorem, we describe what happens when \( m=2\) and \( x_2\) is close to \(1\).
    
\begin{theorem}\label{proposition: finite cases}
    Let \( \mathbf{x} = (x_1,x_2) \) be such that  \( 1= x_1 > x_2 \geq 0, \) \( q \in (0,1], \) and \( \pmb{\alpha} = (\alpha_1 , \alpha_2) \in (0,1)^2\) be such that \( \alpha_1+\alpha_2 = 1. \) Let \( X \sim \alpha_1 \Pi_{1-qx_1} + \alpha_2 \Pi_{1-qx_2}\) and \( n \geq 3.\) (Recall that by Theorem~\ref{proposition: stright lines}, for any \( n, \) \( X([n]) \) being in \( \mathcal{R}\) is independent of~\( q.\))
    \begin{enumerate}[label=(\alph*)] 
        
        \item Let \( k \in \{ 3, \dots, n\}.\)  
        \begin{itemize}
            \item If  \( \Li_{1-k}\bigl(-\alpha_1^{-1}(1-\alpha_1)\bigr) < 0, \) then \( \nu_n\bigl( [k] \bigr) >  0\) for \( x_2\) sufficiently close to \(1\).
            \item If \( \Li_{1-k}\bigl(-\alpha_1^{-1}(1-\alpha_1)\bigr) > 0, \) then \( \nu_n\bigl( [k] \bigr) <  0\) for \( x_2\) sufficiently close to \(1\).
        \end{itemize}
        (If \( \Li_{1-k}(-\alpha_1(1-\alpha_1)) = 0, \) then the sign of \( \nu_n([k])\) depends on which zero of \( \Li_{1-k}\) one is considering.)
        \label{item: general 1}
        
        \item  \( \alpha_1 \geq 1/(1- r_2^{(n)}) \) if and only if \( X([n]) \in \mathcal{R}\) for \( x_2 \) sufficiently close to \(1\) (see Figure~\ref{fig: finite case rays 3}).\label{item: general 2}

    \end{enumerate}
\end{theorem}

\begin{proof} 
    We first show that~\ref{item: general 1} holds.
    In this case, by~\eqref{eq: general probability}
    \begin{align*} 
        &\nu_n([k])
        =  \sum_{j=0}^k  (-1)^{k-j} 
        \binom{k}{j} \log \bigl(\alpha_1 + (1-\alpha_1) x_2^{n-j}\bigr)  
        \\&\qquad= \sum_{j=0}^k  (-1)^{k-j} 
        \binom{k}{j} \log \bigl( 1 +\alpha_1^{-1} (1-\alpha_1) x_2^{n-j})\bigr)  
        \\&\qquad = (-1)^{k+1} \sum_{j=0}^k  (-1)^{j} 
        \binom{k}{j} \Li_1\bigl(- \alpha^{-1}(1-\alpha_1) x_2^{n-j} \bigr). 
        \end{align*}
        One verifies that for any  \(\ell \in \{ 0,1,\dots, k+1\}, \) we have
        \begin{equation*}
            \sum_{j=0}^k  (-1)^{j} 
        \binom{k}{j}  (n-j)^\ell = \begin{cases}
            0 &\text{if } \ell < k \cr k!& \text{if } \ell=k\cr
            (n-k/2) (k+1)!  &\text{if } \ell = k+1.
        \end{cases}
        \end{equation*}
        Combining these observations and Property~\ref{item: polylog 1} of the polylogarithm functions, we obtain 
        \begin{align*}
             &\frac{d^\ell}{dx_2^\ell} \nu_n([k])|_{x_2=1} 
             \\&=
             \begin{cases}
                 0 &\text{if } \ell<k ,
                 \cr 
                 k! (-1)^{k+1} \Li_{1-k} \bigl(-\alpha_1^{-1}(1-\alpha_1)\bigr)\  &\text{if } \ell =k,
                 \cr
                 \frac{(k+1)! (-1)^{k+1} }{2}
    \Bigl(
    (2n-k)\Li_{-k} \bigl(-\alpha_1^{-1}(1-\alpha_1)\bigr)  
    -
    k
    \Li_{1-k}\bigl(-\alpha_1^{-1}(1-\alpha_1)\bigr) 
    \Bigr)
                 &\text{if } \ell=k+1.
             \end{cases} 
             \end{align*}
    (We will not need the case \( \ell=k+1 \) for~\ref{item: general 1}, but we will need it for~\ref{item: general 2} and hence include it here.) 
    Using a Taylor expansion of \( \nu_n([k]) \) around \( x_2=1, \) it follows that \(  \nu_n([k]) <0\) for \( x_2 \) near \(1\) if  
    \[
    0> \frac{d^k}{dx_2^k} \nu_n([k])\bigr|_{x_2=1} (-1)^k = -k! \Li_{1-k} \bigl(-\alpha_1^{-1}(1-\alpha_1)\bigr).
    \]
    Hence, if  
    \[
         \Li_{1-k} \bigl(-\alpha_1^{-1}(1-\alpha_1)\bigr)> 0,
    \]
    then 
    \(  \nu_n([k]) <0\) for \( x_2\) near \(1\).
    Analogously, if
    \[
         \Li_{1-k} \bigl(-\alpha_1^{-1}(1-\alpha_1)\bigr)< 0,
    \]
    then 
    \(  \nu_n([k]) >0\) for \( x_2\) near \(1\). This completes the proof of~\ref{item: general 1}. 

    We now show that~\ref{item: general 2} holds. 
    Note that  \( \alpha_1 < 1/(1-r_2^{(n)}) \Leftrightarrow -\alpha_1^{-1}(1-\alpha_1) \ < r_2^{(n)}=r_2^{(n)}.\) 
    If \( \alpha_1 > 1/(1-r_2^{(n)}), \) then \( \Li_{1-k}(-\alpha_1(1-\alpha_1)) <0\) for all \( k \in \{ 2,3, \dots, n\},\) and so, by~\ref{item: general 1}, we have \( \nu_n([k])>0\) for \( x_2\) close to \(1\) for all \( k \in \{ 1,2,\dots, n \}.\) Hence \( X([n]) \in \mathcal{R}\) for \( x_2\) close to \(1\). 
    Conversely, if \( \alpha_1 < 1/(1-r_2^{(n)}), \) then there is at least one \( k \in \{ 3,4,\dots, n\}\) such that \( -\alpha_1^{-1}(1-\alpha_1) \in (r_3^{(k)},r_2^{(k)}),\) and hence \( \Li_{1-k}(-\alpha_1(1-\alpha_1)) >0.\) For this \(k,\) by~\ref{item: general 1}, we have \( \nu_n([k])<0\) for \( x_2\) close to \(1\), and hence \( X([n]) \notin \mathcal{R}\) for \( x_2\) close to \(1\). 
    Finally, if \( \alpha_1 = 1/(1-r_2^{(n)}), \) then by the above, we have that  \( \nu_n([k])>0\) for \( x_2\) close to \(1\) for all \( k \in \{ 1,2,\dots, n-1 \}.\) Noting that for this \( \alpha_1,\) we have \( \Li_{1-n} (-\alpha_1^{-1}(1-\alpha_1))= 0 \) and \( \Li_{-n} \bigl(-\alpha_1^{-1}(1-\alpha_1)\bigr)> 0 ,\) and so we obtain \( \nu_n([n])>0\) for \( x_2\) close to \(1\) by using a Taylor expansion of \( \nu_n([n])\) around \( x_2=1\) of degree \( n+1\) and Property~\ref{item: polylog 4} of the polylogarithm functions.  Hence \( X([n]) \in \mathcal{R}\) for \( x_2\) close to \(1\) .
    Finally, if \( \alpha_1 = 1/(1-r_2^{(n)}), \) then by the above, we have that  \( \nu_n([k])>0\) for \( x_2\) close to \(1\) for all \( k \in \{ 1,2,\dots, n-1 \}.\) Noting that for this \( \alpha_1,\) we have \( \Li_{1-n} (-\alpha_1^{-1}(1-\alpha_1))= 0 \) and \( \Li_{-n} \bigl(-\alpha_1^{-1}(1-\alpha_1)\bigr)> 0 ,\) and so we obtain \( \nu_n([n])>0\) for \( x_2\) close to \(1\) by using a Taylor expansion of \( \nu_n([n])\) around \( x_2=1\) of degree \( n+1\) and Property~\ref{item: polylog 4} of the polylogarithm functions.  Hence \( X([n]) \in \mathcal{R}\) for \( x_2\) close to \(1\).
    Finally, if \( \alpha_1 = 1/(1-r_2^{(n)}), \) then by the above, we have that  \( \nu_n([k])>0\) for \( x_2\) close to \(1\) for all \( k \in \{ 1,2,\dots, n-1 \}.\) Noting that for this \( \alpha_1,\) we have \( \Li_{1-n} (-\alpha_1^{-1}(1-\alpha_1))= 0 \) and \( \Li_{-n} \bigl(-\alpha_1^{-1}(1-\alpha_1)\bigr)> 0 ,\) and so we obtain \( \nu_n([n])>0\) for \( x_2\) close to \(1\) by using a Taylor expansion of \( \nu_n([n])\) around \( x_2=1\) of degree \( n+1\) and Property~\ref{item: polylog 4} of the polylogarithm functions.  Hence \( X([n]) \in \mathcal{R}\) for \( x_2\) close to \(1\).
    This concludes the proof of~\ref{item: general 2}. 
\end{proof}

    \begin{figure}[htp]
    \centering
    \includegraphics[width=.7\linewidth,trim=50 0 0 0, clip]{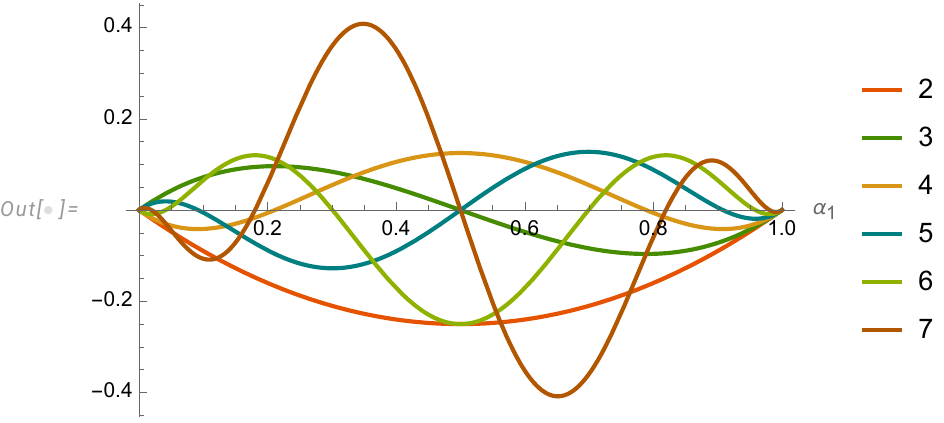}
    \caption{In the above figure, we draw \( \Li_{1-k}(-\alpha_1^{-1}(1-\alpha_1)) \) for \( k = 2,3,4,5,6,7 \) and \( \alpha_1 \in (0,1).\) Using Theorem~\ref{proposition: finite cases}, one verifies that when \( n=3,4,5,6,7 \) we get the following thresholds in \( \alpha_1\) for getting \( X([n]) \in \mathcal{R}\) when \( x_2\) is close to one: \( \frac{1}{2},\) \(\frac{1}{2} + \frac{1}{2\sqrt{3}} ,\) \( \frac{1}{2} + \frac{1}{\sqrt{6}} ,\) \(\frac{1}{2} +\frac{1}{2}\sqrt{\frac{\sqrt{105}+15}{30}},\) \( \frac{1}{2} + \frac{1}{2}\sqrt{\frac{\sqrt{15}+10}{15} }  \) (approximately equal to 0.5, 0.788675, 0.908248, 0.958684, 0.98085 respectively).}
    \label{fig: polylog}
\end{figure}

\section{Tree-indexed Markov chains and the Ising model}\label{sec: tree}

In this section, we apply results from the previous section to obtain results for tree-indexed Markov chains and the Ising model on \( \mathbb{Z}^d,\) \( d \geq 2.\)

Given a tree \( \mathcal{T} \) and parameters \( p,r \in (0,1), \) we construct a reversible tree-indexed \( \{ 0,1\}\)-valued Markov chain \( X \) indexed by \( V(\mathcal{T}) \) sequentially as follows. First, fix any root \( o \) of the tree and let \( X_o \sim r \delta_0 + (1-r) \delta_1. \) In the following steps, for any vertex \( j \in V(\mathcal{T})\) that is adjacent to some vertex \( i \in V(\mathcal{T}) \) which has already been determined, we let \( X_j \overset{d}{=} (1-p) \delta_{X_i} + p(r \delta_0 + (1-r) \delta_1),\) independently of everything else.
\color{black}{}

\begin{theorem}\label{theorem: MC on trees}
Let $d\ge 3$ and let \( \mathcal{T}\) be a tree where some vertex \( o \) has at least $d$ infinite disjoint paths emanating from it.  Let $X^{p,r}$ be the
    positively associated tree-indexed Markov chain on \( \mathcal{T}\) where \( p \) and \( r\) as in Remark~\ref{remark: p and r MC} belong to \( (0,1)\). 
    If  $r< 1/(1-r_2^{(d)}),$ where \( r_2^{(d)}\) is as in the paragraph before Theorem~\ref{proposition: finite cases},
        then \( X^{p,r} \notin \mathcal{R}.\)
 \end{theorem}
 
Note that since $r_2^{(d)}$ approaches \(0\) as $d\to \infty,$ we have that for any $r\in (0,1)$, there is some~$d$ such that \(X^{p,r} \notin \mathcal{R}\) for all \( p \in (0,1).\) \color{black}

\begin{proof}
    For \( k \geq 1, \) let  \( L_k = \{ \ell_1^{(k)},\ell_2^{(k)}, \dots, \ell_d^{(k)} \} \) be a set of vertices in \( \mathcal{T} \) such that for each \( j \in [d],\) 
    \begin{enumerate}[label=(\roman*)]
        \item \( \dist(\ell_j^{(k)},o) = k,\) and
        \item the paths between \( o\) and \( \ell_j^{(k)} ,\) \( j \in [d],\) are disjoint. 
    \end{enumerate} Note that conditioned on \( X^{p,r}_o,\) the random variables \( X^{p,r}_{\ell_1^{(k)}}, \dots, X^{p,r}_{\ell_d^{(k)}}\) are independent and identically distributed. Moreover, if we let
    \begin{equation*} 
        \begin{cases} 
            p_0^{(k)} \coloneqq P(X^{p,r}_{\ell_1^{(k)}} = 1 \mid X^{p,r}_o = 0) \cr
            p_1^{(k)} \coloneqq P(X^{p,r}_{\ell_1^{(k)}} = 1 \mid X^{p,r}_o = 1), 
        \end{cases}
    \end{equation*}
    then
    \begin{equation*}
        X|_{L_k} \sim P(X_o=0)  \Pi_{p_0^{(k)}}+P(X_o=1)  \Pi_{p_1^{(k)}}.
    \end{equation*}
    Note that \( p_0^{(k)}<p_1^{(k)}\) and that
    \begin{equation*}
        \lim_{k \to \infty}\frac{1-p_1^{(k)}}{1-p_0^{(k)}}  = 1.
    \end{equation*} 
    Using Theorem~\ref{proposition: finite cases}\ref{item: general 2}, it follows that for $r< \frac{1}{1-r_2^{(d)}}$
    and for any sufficiently large \( k,\) \( X|_{L_k} \notin \mathcal{R},\) and hence, by Lemma~\ref{lemma: finite to infinite}\ref{item: finite to infinite i}, \( X \notin \mathcal{R}.\)  
    \end{proof}

\begin{remark}
    Interestingly, we can show that if the tree \( \mathcal{T} \) consists of one vertex with three infinite rays emanating from the vertex, then, for \( r \) close enough to \( 1, \) one has that for all \( p \in (0,1),\) the Markov chain indexed by \( \mathcal{T}\) belongs to \( \mathcal{R}.\) In light of Theorem~\ref{theorem: MC on trees}, this tree therefore exhibits a phase transition in \( r,\) which we do not believe holds for the binary tree. This will be elaborated on and extended in future work.
    In fact, one can check that for the tree \( \mathcal{T}\) with four vertices and three leaves, the set of \( (p,r)\) for which the tree-indexed Markov chain on \( T\) is not in \( \mathcal{R}\) is a non-trivial subset of~\( (0,1)^2.\)
\end{remark}

\begin{theorem}\label{theorem: Ising}
    Let \( X\) be the Ising model on \( \mathbb{Z}^d ,\) \( d \geq 2,\) (identifying \( -1 \) with \( 0\)), and coupling constant \( J>0\). Then, if 
    $$
\frac{e^{2dJ}}{e^{2dJ}+ e^{-2dJ}} \le \frac{1}{1-r_2^{(2d)}}
$$
where \( r_2^{(2d)}\) is as in the paragraph before Theorem~\ref{proposition: finite cases}, we have that \( X \notin \mathcal{R}.\)
\end{theorem}

\begin{proof}

        Let \( o \coloneqq (0,0,\ldots,0) \) and %
    for $k\ge 1$, let \[
    L_k \coloneqq \{ (\pm k,0,\ldots,0),
    (0,\pm k,0,\ldots,0),\ldots,
    (0,0,\ldots,0, \pm k) \}. \]
    Next, let $C_k$ consist of o and the $2d$ direct paths
    from o to the points in $L_k$ and finally,
    let \( X^{(k)} \) be the restriction of \( X\) conditioned to be zero on the set \( \mathbb{Z}^2 \backslash C_k,\) to \( C_k.\)  Then, conditioned on \( X_o^{(k)}\) the random variables \( \{X^{(k)}_\ell\}_{\ell \in L_k }\) are independent and identically distributed.
    Moreover, if we, for \( \ell \in L_k\), let
    \begin{equation*} 
        \begin{cases} 
            p_0^{(k)} \coloneqq P(X^{(k)}_{\ell} = 1 \mid X_o^{(k)} = 0) = P\bigl(X_{\ell } = 1 \mid X_o = 0,\, X(\mathbb{Z}^2 \backslash C_k) \equiv 0\bigr)\cr
            p_1^{(k)}\coloneqq P(X^{(k)}_{\ell } = 1 \mid X_o^{(k)} = 1)= P\bigl(X_{\ell } = 1 \mid X_o = 1,\, X(\mathbb{Z}^2 \backslash C_k) \equiv 0\bigr)
        \end{cases}
    \end{equation*}
    then
    \begin{equation*}
        X^{(k)}|_{L_k} \sim P(X_o^{(k)}=0)  \Pi_{p_0^{(k)}}+P(X_o^{(k)}=1)  \Pi_{p_1^{(k)}}.
    \end{equation*}
    Note that since \( J>0, \) for every \( k\geq 1, \) \( p_0^{(k)} \neq p_1^{(k)},\) and moreover
    \begin{equation*}
        \lim_{k \to \infty}\frac{1-p_1^{(k)}}{1-p_0^{(k)}} =1.
    \end{equation*} 
     Finally, observe that \(P(X_o^{(k)}=0)\le 
    \frac{e^{2dJ}}{e^{2dJ}+ e^{-2dJ}}\).
    Using Theorem~\ref{proposition: finite cases}\ref{item: general 2},
    it follows that for any sufficiently large \( k,\) \( X^{(k)}|_{L_k} \notin \mathcal{R},\) and hence, by Lemma~\ref{lemma: finite to infinite}\ref{item: finite to infinite i}~and~\ref{item: finite to infinite iii}, \( X \notin \mathcal{R}.\)
\end{proof}

\begin{remark}
    For $d=2$, if we look at the critical value $J_c=1/2 \log(1 + \sqrt{2}) =0.440687$, one has that $\frac{e^{4J_c}}{e^{4J_c}+ e^{-4J_c}}  > \frac{3+ \sqrt{3}}{6} =\frac{1}{1-r_2^{(4)}}$ and hence the above theorem is only applicable  within a subset of the subcritical regime  (although we have no suspicions whatsoever that the conclusion fails somewhere). For  higher dimensions,  it is known that $J_c(d)\asymp \frac{1}{d}$ as $d\to \infty$ and since $r_2^{(2d)}$ goes to 0 as $d\to \infty$, we can conclude that for all sufficiently high dimensions,  the above theorem rules out being in  $\mathcal{R}$ throughout the subcritical regime and partly into the supercritical regime.   
    The above result can with small modifications be extended to the Ising model with an external field.
\end{remark}

\section{The stationary case}\label{sec: stationary}

In this section, we consider the 
stationary case and hence any \( \nu\) that we are considering will be assumed to be translation invariant.\color{black}

After having obtained the results in this section, we learned from
Nachi Avraham-Re'em and Michael Bj\"orklund that most of the results in this section follow from known results in the theory of
so-called Poisson suspensions, an area within ergodic theory and primarily within infinite measure ergodic theory. See for example~\cite{roy1,roy2}.
Since this section is only four pages, we decided to leave it as is, providing fairly direct proofs of the results stated, in the spirit of the rest of the paper,
rather than introducing the notion of a Poisson suspension and refer to the relevant theorems in the literature. For example, while Theorem~\ref{theorem: bernoulli shift} would follow from the known and easy result  that ``Poisson suspensions of dissipative systems are Bernoulli'', the proof here gives an easy factor map from an i.i.d.\ process to our system.

We begin by giving both sufficient and necessary conditions on $\nu$ for $X^\nu$ to be ergodic. For simplicity, we stick to one-dimensional processes.

We start with the simplest necessary condition as a warm-up.

\begin{proposition}\label{proposition: finite ergodic}
    Assume that $\nu$ is finite and not the zero measure. Then $X^\nu$ is not ergodic.
\end{proposition}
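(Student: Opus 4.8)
The plan is to exploit the formula $P(X^\nu(A)\equiv 0) = e^{-\nu(\mathcal{S}_A^\cup)}$ to show that when $\nu$ is finite and nonzero, the tail event ``no set from the Poisson process meets $A$'' does not become asymptotically trivial as $A$ recedes to infinity, which contradicts ergodicity (more precisely, mixing along a sequence of shifts, which ergodicity would force on average). Concretely, since $\nu$ is finite, $Y^\nu$ consists almost surely of finitely many sets $\{B_j\}_{j\in I}$, and the event $E \coloneqq \{I = \emptyset\} = \{X^\nu \equiv \mathbf 0\}$ has probability $e^{-\nu(\mathcal{P}(\mathbb Z)\setminus\{\emptyset\})} \in (0,1)$ — strictly between $0$ and $1$ precisely because $\nu$ is finite (so the probability is positive) and nonzero (so the probability is less than one, as then $P(X_0 = 1) > 0$ for at least one coordinate, hence by translation invariance for all coordinates).

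First I would note that $E$ is a shift-invariant event: $X^\nu \equiv \mathbf 0$ is unchanged by translating the configuration. Since $0 < P(E) < 1$, the process $X^\nu$ is not ergodic with respect to the shift. This is essentially immediate once one observes that $\{X^\nu\equiv\mathbf 0\}$ is genuinely shift-invariant and has intermediate probability. The only point needing a line of justification is that $P(X_0^\nu = 1) > 0$: if instead $X_0^\nu = 0$ a.s., then by translation invariance $X^\nu \equiv \mathbf 0$ a.s., forcing $\nu(\mathcal{S}_i) = 0$ for every $i$, hence $\nu(\mathcal{S}_A^\cup) = 0$ for every finite $A$; letting $A\uparrow\mathbb Z$ and using that $\nu$ is finite (so continuity from below applies and $\mathcal{P}(\mathbb Z)\setminus\{\emptyset\} = \bigcup_A \mathcal{S}_A^\cup$) gives $\nu \equiv 0$, a contradiction.

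I expect there to be essentially no obstacle here: the whole content is the observation that a finite Poisson intensity produces a positive-probability atom at the all-zeros configuration, which is a nontrivial invariant event. The only mild care required is the standard measure-theoretic step that $\mathcal{P}(\mathbb Z)\setminus\{\emptyset\}$ is exhausted by the increasing family $\mathcal{S}_{\{-n,\dots,n\}}^\cup$, so that finiteness of $\nu$ together with $\nu \ne 0$ yields both $P(E) > 0$ and $P(E) < 1$. One could alternatively phrase the argument via the all-ones configuration or via the event $\{X^\nu(A)\equiv 0\}$ for large boxes $A$ and a mixing computation, but the direct invariant-event argument is the cleanest.
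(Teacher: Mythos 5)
Your proposal is correct and uses essentially the same argument as the paper: both proofs exhibit the shift-invariant event $\{X^\nu\equiv\mathbf 0\}$, whose probability is $e^{-\|\nu\|}>0$ because $\nu$ is finite and is strictly less than $1$ because $\nu$ is nonzero. The paper reaches the same conclusion after first writing $X^\nu$ as a Poisson-number maximum of i.i.d.\ copies of the normalized process, but the punchline is identical, and your direct justification that $P(X^\nu\equiv\mathbf 0)<1$ is sound.
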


\begin{proof}
	This follows immediately from the fact that \( P(X^\nu \equiv 0) = e^{-\| \nu \|} \in (0,1). \)\color{black}
	%
\end{proof}

\begin{remark}
    If we let $\mu_k$ be the law of $\max\{Y^1,Y^2,\ldots, Y^k\}$ for \(k \geq 1\) and \(\mu_0\) the Dirac measure on the sequence consisting only of \(0\)'s, the distribution of $X^\nu$ is  given by 
    $$
        \sum_{k=0}^\infty \frac{ \|\nu\|^ke^{-\|\nu\|}}{k!} \mu_k .
    $$
    If $Y$ is ergodic, then the above will often, but not always, yield the ergodic decomposition of $X^\nu$. Reasons why it does not in general are
    \begin{enumerate}[label=(\arabic*)]
        \item if $Y \equiv 1$  a.s., then (and only then) $\mu_1=\mu_k$ for all $k\ge 1,$ and 
        \item it is possible that $\mu_k$ is not ergodic and must be further decomposed but this cannot happen if $Y$ is weak-mixing.
    \end{enumerate} 
\end{remark}

The next theorem strengthens  Proposition~\ref{proposition: finite ergodic} and provides two equivalent conditions for ergodicity. Before stating the theorem, we define
$$
    \mathcal{Z} \coloneqq \Bigl\{\eta \in \{0,1\}^{\mathbb{Z}}\colon \lim_{n\to\infty} \frac{\sum_{k=0}^{n-1}\eta_k}{n} =0\Bigr\}.
$$
In other words, $\mathcal{Z}$ consists of the configurations 
with zero density. It is immediate that $\mathcal{Z}$ is a translation invariant set.

\begin{theorem}\label{theorem: main ergodic}
    Assume that $P(X^{\nu}\equiv 1) <1,$ or equivalently, that $\nu(\mathcal{S}_{0})< \infty$.
    Then the following are equivalent.
    \begin{enumerate}[label=(\roman*)]
        \item $\nu$ cannot be expressed as $\nu_1+\nu_2$ where $\nu_1$ and $\nu_2$ are translation invariant and $\nu_2$ is a nonzero finite measure 
        \item $\nu(\mathcal{Z}^c)=0$ 
        \item $X^{\nu}$ is ergodic.
    \end{enumerate}
\end{theorem}

\begin{proof} 
    (iii) implies (i). \\
    Assume that $\nu=\nu_1+\nu_2$ where $\nu_1$ and $\nu_2$ are translation invariant and $\nu_2$ is a nonzero finite measure.
    By Lemma~\ref{lemma: 2.10new}, if we let \( X^{\nu_1} \) and \( X^{\nu_2} \) be independent, then $X^\nu$ and $\max\{X^{\nu_1},X^{\nu_2}\}$ have the same distribution.
    Since $\nu_2$ is a finite measure, $X^{\nu_2} \equiv 0$  with positive probability, and hence the density of  $X^\nu$ is the same as the density  of $X^{\nu_1}$ with positive probability. Since $\nu_2$ is nonzero, $X^{\nu_2}$ has a positive density with positive probability and since $X^{\nu_1}$ and $X^{\nu_2}$ are independent and $P(X^{\nu_1}\equiv 1) <1$, the density of $X^\nu$ is strictly larger than the density of $X^{\nu_1}$ with positive probability. Together this implies that $X^{\nu}$ cannot be ergodic.

    (i) implies (ii). \\
    It suffices to show that for any $\delta >0$,
    $$
        \nu\Bigl(\bigl\{\eta \in \{0,1\}^\mathbb{Z}\colon \limsup_{n\to\infty} \frac{\sum_{k=0}^{n-1}\eta_k}{n} \ge \delta \bigr\}\Bigr)=0.
    $$
    To do this, we will show that $\nu(A_\delta)=0$ where
    $$
        A_\delta \coloneqq
        \bigl\{\eta \in \{0,1\}^\mathbb{Z}\colon \liminf_{n\to\infty} \frac{\sum_{k=0}^{n-1}\eta_k}{n} \ge \delta \bigr\},\quad \delta > 0
    $$
    and then apply~\cite[Theorem 2]{Satoerg} to conclude the previous statement. 
    Using Fatou's Lemma, we get
    $$
        \delta\nu(A_\delta) \le
        \int  \liminf_{n\to\infty} 
        \frac{\sum_{k=0}^{n-1}\eta_k}{n} \, d\nu|_{A_\delta}
        \le 
        \liminf_{n\to\infty} \int 
         \frac{\sum_{k=0}^{n-1}\eta_k}{n}\ d\nu|_{A_\delta} =
        \nu|_{A_\delta}(\mathcal{S}_{0}) < \infty.
    $$
    Hence $\nu(A_\delta)<\infty,$ and so, by assumption, we can conclude that $\nu(A_\delta)=0$ as follows. Since $A_\delta$ is translation invariant, we can write $\nu=\nu|_{A_\delta}+ (\nu-\nu|_{A_\delta})$ where the two summands are translation invariant. Using the assumption (i), the desired conclusion follows.
 
    (ii) implies (iii). \\
    We first show that
    \begin{equation}\label{eq: first step}
        \lim_{n\to\infty} 
        \frac{1}{n} 
        \sum_{k=0}^{n-1}P\bigl(X^\nu_0=X^\nu_k=0\bigr)=
        P\bigl(X^\nu_0=0\bigr)^2.
    \end{equation}
    By the first statement in Proposition~\ref{prop: correlation}, this is equivalent to showing that 
    $$
        \lim_{n\to\infty} 
        P\bigl(X^\nu_0=0\bigr)^2 \cdot 
        \frac{1}{n} 
        \sum_{k=0}^{n-1} (e^{\nu(\mathcal{S}_{0}\cap \mathcal{S}_{k})}-1)
        =0.
    $$ 
    Since \( P(X^\nu \equiv 1)<1,\) we have
    \begin{equation*}
        \sup_k \nu(\mathcal{S}_{0}\cap \mathcal{S}_{k}) \leq \nu(\mathcal{S}_{0})<\infty,
    \end{equation*}
    and hence there exists $c>0$ so that for all $n \in \mathbb{N}$ we have
    \begin{equation}\label{eq:lest eq of proof}
        \frac{1}{n}  \sum_{k=0}^{n-1} (e^{\nu(\mathcal{S}_{0}\cap \mathcal{S}_{k})}-1)
        \leq 
        \frac{c}{n} 
        \sum_{k=0}^{n-1} \nu(\mathcal{S}_{0}\cap \mathcal{S}_{k})
        =
        c \int \mathbf{1}_{\mathcal{S}_{0}}\frac{1}{n} 
        \sum_{k=0}^{n-1} \mathbf{1}_{\mathcal{S}_{k}}d\nu
    \end{equation}
    By our key assumption, the random sum \( \frac{1}{n} \sum_{k=0}^{n-1} \mathbf{1}_{\mathcal{S}_{k}}\) approaches 0 $\nu$-a.e. Since $\nu(\mathcal{S}_{0})<\infty$ we can now apply the Bounded Convergence Theorem to conclude that the right hand side of~\eqref{eq:lest eq of proof} approaches 0 as $n\to\infty$. This shows that~\eqref{eq: first step} holds.

    From here, there are two ways to complete the proof. One quick way to do this is that one can observe that Lemma~\ref{lemma: 2.10new} implies that $X^\nu$ is max-infinitely divisible (see~\cite{ks} for the definition) and then apply Theorem 1.2 from this latter reference. 
    However, one can proceed directly as follows. Let $S_1$ and  $S_2$ be two finite subsets of $\mathbb{Z}.$ Let $A$ be the event that $X^\nu(S_1) \equiv 0$  and $B$ be the event that $X^\nu(S_2)\equiv 0.$
    One can show, analogously to the first step, that 
    \begin{equation}\label{eq: cylinder sets}
        \lim_{n\to\infty} 
        \frac{1}{n} 
        \sum_{k=0}^{n-1}P(A\cap T^{-k} B) =P(A)P(B)
    \end{equation}
    where $T^{-k}$ is a \( k\) - step left shift. One does this by applying the full statement of Proposition~\ref{prop: correlation} and then easily modifying the argument above.
    Using inclusion-exclusion, one can conclude~\eqref{eq: cylinder sets} for any cylinder sets \( A\) and \( B.\) This yields the ergodicity.
\end{proof} 

\begin{remark}
    It is easy to see that for positively associated processes, ergodicity implies weak-mixing. Alternatively,  this is stated in~\cite[Theorem 1.2]{ks} for max-infinitely divisible processes. In addition, by~\cite[Theorem 1.1]{ks}, to prove mixing, it suffices to prove decaying pairwise correlations which amounts to showing that $\lim_{n\to \infty} \nu(\mathcal{S}_{\{0,n\}}^\cap)=0$.
\end{remark}

We next give a sufficient condition on $\nu$ which yields much stronger ergodic behavior.

\begin{theorem}\label{theorem: bernoulli shift}
    Assume that
    $\nu$ is such that  $\nu$-a.e.\color{black}{} $S \in \mathcal{P}(\mathbb{Z})$ has a smallest element. Then $X^\nu$ is a Bernoulli Shift; i.e.\ it is a factor of i.i.d.'s.
\end{theorem}
 
\begin{proof}
    Let $\mathcal{T}_k$ be the set of subsets whose smallest element is $k$. Then $\nu$ is concentrated on $\bigcup \mathcal{T}_k$ and by translation invariance, the sets $\mathcal{T}_k$ are disjoint and all have the same $\nu$-measure.
    
    For the rest of the proof, the three cases \( \nu(\mathcal{T}_k) = \infty, \) \( \nu(\mathcal{T}_k) = 0, \) and \( \nu(\mathcal{T}_k) \in (0, \infty) \) will be treated separetedly.

\begin{description}
	\item[Case 1:] Assume that $\nu(\mathcal{T}_k) = \infty$ for some (and hence all) $k$. 
	
	In this case, it is easy to see that $X^\nu\equiv 1$ a.s., and hence we are done.
	
	\item[Case 2:] Assume that $\nu(\mathcal{T}_k) = 0$  for some (and hence all) $k$.

    In this case, $\nu$ is the zero measure and $X^\nu\equiv 0$ a.s. and hence, we are done.
    
    \item[Case 3:] Assume that $\nu(\mathcal{T}_k) \in (0,\infty)$ for some (and hence all) \( k.\)

    We will represent $X^\nu$ as a translation invariant function of random variables $\{U_n\}_{n\in \mathbb{Z}}$, each uniform on the unit interval $[0,1]$ as follows. For each $k$, we can use $U_k$ to generate a Poisson point process $\mathcal{P}_k$ on $\mathcal{T}_k$ with intensity measure $\nu_{\mathcal{T}_k}$. Then, letting $X_n=1$ if and only if $n\in \cup_{k\le n}\mathcal{P}_k$, we have that $X$ has the same distribution as $X^\nu$ and we are done.
\end{description}

\end{proof}

\begin{remarks}\mbox{}
    \begin{enumerate}
        \item If there is a uniform bound on the radius of the  subsets where $\nu$ is supported, then the above of course  holds and the  map is a block map. 
        \item If there is a uniform bound only on the sizes of the  subsets where $\nu$ is supported (a weaker assumption), then the above of  course holds but this is not necessarily a block map. In fact, if there is not a uniform bound  on the radius of the  subsets where $\nu$ is supported, then the above map is not finitary. 
        \item Much of the above extends to $\mathbb{Z}^d$.
    \end{enumerate}
\end{remarks}

\begin{theorem}\label{theorem: finite sets bernoulli}
    Assume for $\mathbb{Z}^d$ that $\nu$ is supported on sets $S$ for which there exists $(k_1,\ldots,k_d)$ so that $i_j\ge k_j$ for each $j$ for all $(i_1,\ldots,i_d)\in S$. Then $X^\nu$ is a Bernoulli Shift; i.e.\ it is a factor of i.i.d.s. In particular, if $\nu$ is supported on finite sets, then $X^\nu$ is a Bernoulli Shift.
\end{theorem}

\begin{remark}
    The assumption loosely says that $\nu$ is concentrated on sets which are contained inside of a "north-east" quadrant.
\end{remark} 

\begin{proof}[Proof of Theorem~\ref{theorem: finite sets bernoulli}]
We only outline the proof since it requires only minor 
modifications
of the previous proof. We also only do this for $d=2$.
Let $\mathcal{T}_{k_1,k_2}$ be the set of subsets $\mathbb{Z}^d$ where 
$i_1\ge k_1$ and $i_2\ge k_2$ for all $(i_1,i_2)\in S$ and 
$(k_1,k_2)$ is maximal
with respect to the lexographic order with this property. We 
only deal
with the non-trivial case where $\nu(\mathcal{T}_{k_1,k_2})\in (0,\infty)$.
We will represent $X$ as a translation invariant function of random variables 
$\{U_{k_1,k_2}\}$, uniform on the interval $[0,1]$. For each $(k_1,k_2)$, we can use $\{U_{k_1,k_2}\}$
to generate a Poisson point process on $\mathcal{T}_{k_1,k_2}$. Then 
one proceeds as in the 1-dimensional case.
\end{proof}

In view of the results in this section, it becomes clear
that, from an ergodic theoretic point of view, the most
interesting case is when $\nu$ is supported on infinite
zero density sets. This is analogous to what happened
when one studied divide and color models in~\cite{st}.

The following provides us with such an example.

\begin{example}
    Let \( X \) be the random interlacements process on \( \mathbb{Z}^3.\) Then, by construction, \( X = {X}^{\nu},\) where \( \nu\) is the corresponding measure with support on the set of all simple random walks trajectories, transient in both directions,
    in \( \mathbb{Z}^3,\) and with the property that the mass assigned to the set of all simple random walks that intersects a given finite set is finite and non-zero. 
    Let $\nu'$ be $\nu$ restricted to the $x$-axis. 
    Since a random walk on \( \mathbb{Z}^2\) is recurrent, 
    $\nu'$ is supported on infinite 0 density sets.
    Letting \( X'\) be the restriction of \( X\) to 
    the \( x \)-axis, we of course have \( X' = {X}^{\nu'}\).
By Theorem~\ref{theorem: main ergodic}, $X'$ is ergodic.
In fact, in~\cite{bb}, the full interlacement process $X$ was
shown to be a Bernoulli shift which implies that 
$X'$ is also a Bernoulli shift.
\end{example}

\section{Questions}\label{sec: questions}

In this section, we collect a few interesting and natural questions about Poisson representable processes.
\color{black}

\begin{enumerate}[label= Q\arabic*]

\item 
Is the subcritical (high temperature) Curie-Weiss model in
$\mathcal{R}$ for large $n$?

\item Is the Ising model in \( \mathbb{Z}^d\) for $J>0$
always not in $\mathcal{R}$?

\item Is there any nontrivial tree-indexed Markov chain on an infinite regular tree (other than \( \mathbb{Z}\)) that is in $\mathcal{R}$? \label{item: question 3}

\item 
Take i.i.d.\ bond percolation on \( \mathbb{Z}^d,\) \( d>1, \) \( p > p_c.\) Take \( X(v) = 1\) if $v$ belongs to the infinite
component and \( 0\) otherwise. Is $X\in \mathcal{R}$?
Note, it is known (see~\cite{BHK}) that the law of \( X\) is 
downward FKG. If $X=X^\nu$ for some $\nu$, it is easy to see
that \( \nu\) must then be supported on sets that have no finite components.

\item Is the upper invariant measure for the contact process in $\mathcal{R}$? It is known to be downwards FKG.\label{item: question 5}

\item Given $\nu$, can the behavior of 
$X^{c\nu}$ depend in an essential way on
$c$? This is clearly a fairly vague question;
one example where one sees this kind
of phenomenon is in the percolation
properties of the interlacement process.

\item 
We have seen that if 
   $\nu= \sum_{i\in \mathbb{Z}, n\ge 1} a_n \delta_{i, i+n}$ with
$\sum a_n < \infty$, then $X^\nu$ is a factor of i.i.d.'s.
   Is $X^\nu$ in fact a finitary factor of i.i.d.'s?

\item Related to Proposition~\ref{proposition: domination from below}, what can one say concerning the class of product measures which dominate \( X^\nu\) for a translation invariant measure \( \nu\) on \( \mathcal{P}(\mathbb{Z}^d)\smallsetminus \{ \emptyset \}\)?
\end{enumerate}

\begin{remark}
	Since the submission of this manuscript, some of the questions listed above have been partially answered. In particular, in~\cite{bf2025}, it was shown that for some parameter choices, the contact process is not in \( \mathcal{R}, \) thus partially answering~\ref{item: question 5}, and in~\cite{f2025}, tree indexed Markov processes were studied in more detail to partially answer~\ref{item: question 3} in the negative.	
\end{remark}

\addcontentsline{toc}{section}{Acknowledgements}
\paragraph{Acknowledgements}

Our foremost thanks go to Barney, without whom, this project would almost certainly not have been completed or might have taken an additional hundred years. In addition, there are many people we consulted for questions, references, and discussion. For these, we thank David Aldous, Nachi Avraham-Re'em, Stein Andreas Bethuelsen, Michael Bj\"orklund, Harry Crane, Persi Diaconis, Svante Janson, Bob Pego, Konrad Schm\"udgen, Jan Stochel, Jerzy Stochel, and Aernout van Enter. We especially acknowledge Christian Berg and Svante Janson, whose responses to a question about moment sequences led us to a better understanding of the permutation invariant case.  We also thank the two anonymous referees for many useful comments and for simplifying our proof of Proposition~\ref{proposition: finite ergodic}.\color{black}{} Lastly, the initial idea of this project stemmed from attending Johan Tykesson's excellent lectures on the interlacement process.

The second author was supported as a jubilee professor at Chalmers University of Technology. 
The third author acknowledges the support of the Swedish Research Council, grant no. 2020-03763.


\begin{thebibliography}{99}


	
    \bibitem{BHK} van den Berg, J., H\"{a}ggstr\"{o}m, O., Kahn, J. Some conditional correlation inequalities for percolation and related processes. Random Struct. Algorithms 29 (2006), no.4, 417--435.

	\bibitem{bf2025} Bethuelsen, S. A., Forsstr\"om, M. P., Mixing for Poisson representable processes and consequences for the Ising model and the contact process, preprint available at \url{https://arxiv.org/abs/2501.14445} (2025)

    \bibitem{bl} Bloznelis, M., Leskel\"a, L. Clustering and percolation on superpositions of Bernoulli random graphs. Random Struct. Algorithms (2023), 283--342.
	
    \bibitem{BJR}
    Bollob\'as, B., Janson, S., Riordan, O. Sparse random graphs with clustering, Random Struct. Algorithms 38(3) (2011), 269--323.

    \bibitem{bb} Borb\'enyi, M., R\'ath, B., Rokob, S. Random interlacement is a factor of i.i.d., Electron. J. Probab. 28 (2023).

    \bibitem{crane} Crane, H. Combinatorial L\'evy processes, Ann. Appl. Probab. 28(1) (2018), 285--339.  
    
    \bibitem{DRS} Drewitz, A., R\'a{}th, B., Sapozhnikov, A. An introduction to random interlacements. Springer Briefs Math. Springer Cham, (2014). x+120 pp.
    
	\bibitem{fds} Fishburn, P. C., Doyle, P. G., Shepp, L. A., The Match Set of a Random Permutation Has the FKG Property, The Annals of Probability, Vol. 16, No. 3 (1988), 1194--1214.
    
    
	\bibitem{f2025} Forsstr\"om, M. P., Poisson representations for tree-indexed Markov chains, preprint available at \url{https://arxiv.org/abs/2501.14428} (2025) 
	
	\bibitem{g2023} Gladkov, N., A strong FKG inequality for multiple events, preprint available at \url{https://arxiv.org/abs/2305.02653} (2023)
	
    \bibitem{ks}
    Kabluchko, Z., Schlather, M.
    Ergodic properties of max-infinitely divisible processes, Stoch. Process. Their Appl., 120 (2010), 281--295.

    \bibitem{Kahn} Kahn, J. A note on positive association, preprint, arXiv:2210.08653 (2022). 

    \bibitem{lp} Last, G.,  Penrose, M. Lectures on the Poisson Process (Institute of Mathematical Statistics Textbooks). Cambridge University Press. (2017)

    \bibitem{LawlerTrujilloFerreras}
    Lawler, G., Trujillo Ferreras, J. A., Random walk loop soup,
Trans. Amer. Math. Soc. 359, no.2, (2007) 767--787.

    \bibitem{L} Liggett, T.M. Survival and coexistence in interacting particle systems, NATO Adv. Sci. Inst. Ser. C: Math. Phys. Sci., 420, Kluwer Academic Publishers Group, Dordrecht (1994), 209--226.

    \bibitem{ls} Liggett, T.M., Steif, J.E. Stochastic domination: the contact process, Ising models and FKG measures, Annales de l'IHP Probabilit\'es et statistiques 42 (2), 40(2006), 223--243.

    \bibitem{roy1} Roy, E. Ergodic properties of Poissonian ID processes, Ann. Probab. 35 (2007), no. 2, 551--576.

    \bibitem{roy2} Roy, E. Poisson suspensions and infinite ergodic theory. Ergod. Th. \& Dynam. Sys. 29(2) (2009), 667--683.

    \bibitem{Satoerg} Sato, R. On a decomposition of transformations in infinite measure spaces, Pac. J. Math., Vol. 44, No. 2 (1973), 733--738.
    
     \bibitem{st} Steif, J.E., Tykesson, J. Generalized divide and color models, ALEA, Lat. Am. J. Probab. Math. Stat. 16 (2019), 1--57.
     
    \bibitem{sh} Steutel, F.W., van Harn, K. Infinite Divisibility of Probability Distributions on the Real Line (1st ed.) (2003). CRC Press. 

\end{thebibliography}
\end{document}